\documentclass[11pt,leqno]{article}

\usepackage{amssymb,amsmath,amsthm,mysects,url,rotating}
 \usepackage{graphicx,epsfig}
 \usepackage{xcolor,graphicx}
\topmargin -.5in
\textheight 9in
\textwidth 6.5in
\oddsidemargin 0.0in
\evensidemargin 0.0in
\newcommand{\n}{\noindent}

\newcommand{\vp}{\varepsilon}
\newcommand{\bb}[1]{\mathbb{#1}}
\newcommand{\cl}[1]{\mathcal{#1}}

\newcommand{\ovl}{\overline}

\theoremstyle{plain}
\newtheorem{thm}{Theorem}[section]
\newtheorem{lem}[thm]{Lemma}

\newtheorem{pro}[thm]{Proposition}

\newtheorem{cor}[thm]{Corollary}

\theoremstyle{definition}

\newtheorem{dfn}[thm]{Definition}

\theoremstyle{remark}
\newtheorem{rem}[thm]{Remark}

\numberwithin{equation}{section}

\setcounter{secnumdepth}{1}

\def\RR{\bb R}
\def\CC{\bb C}

\def\E{\bb E}

\def\P{\bb P}
\def\T{\bb T}

\def\d{\delta}
\def\NN{\bb N}
\def\RR{\bb R}

\def\CC{\bb C}

\begin{document}

\title{Quantum Expanders and Geometry of Operator Spaces}

\author{by\\
Gilles Pisier\footnote{Partially supported   by ANR-2011-BS01-008-01.}\\
Texas A\&M University\\
College Station, TX 77843, U. S. A.\\
and\\
Universit\'e Paris VI\\
Inst. Math. Jussieu, Equipe d'Analyse, Case 186, 75252\\
Paris Cedex 05, France}

 \maketitle

\begin{abstract} We show that there are well separated families of quantum expanders with asymptotically the maximal cardinality allowed by a known upper bound.
 This has applications to the ``growth" of certain operator spaces:
It implies asymptotically  sharp estimates for the growth of the multiplicity of $M_N$-spaces needed to represent (up to a constant $C>1$) the $M_N$-version of the $n$-dimensional operator Hilbert space $OH_n$ as a direct sum of copies of $M_N$. We show that, when $C$ is close to 1, this multiplicity grows as $\exp{\beta n N^2}$ for some constant $\beta>0$. The main idea is to relate quantum expanders with "smooth" points on the matricial analogue of the Euclidean unit sphere. This generalizes to operator spaces a classical geometric result on $n$-dimensional Hilbert space (corresponding to N=1).  In an appendix, we give a quick proof of an inequality (related to Hastings's previous work) on random unitary matrices that is crucial for this paper. 
\end{abstract}

\n The term ``Quantum Expander" is used by Hastings in \cite{Ha}
and   by Ben-Aroya and Ta-Shma in  \cite{BA1} to designate
a sequence $\{U^{(N)}\mid N\ge 1 \}$ of  $n$-tuples $ U^{(N)}=(U_1^{(N)},\cdots,U_n^{(N)})$
of $N\times N$ unitary  matrices such that 
there is an $\vp >0$ satisfying the following ``spectral gap" condition:
 \begin{equation}\label{in1} \forall  N\ \forall x\in M_N\quad \| \sum\nolimits_1^n U_j^{(N)} (x-N^{-1}{tr}(x)) { U_j^{(N)}}^* \|_2 \le n(1-\vp)\| x-N^{-1}{tr}(x)\|_2,\end{equation}
where $\|.\|_2$ denotes the Hilbert-Schmidt norm on $M_N$. More generally,
the term is extended to the case when   this is only defined  for infinitely many   $N$'s, and also
to $n$-tuples of matrices satisfying merely $\sum U_j^{(N)}{U_j^{(N)}}^*=\sum{ U_j^{(N)}}^*U_j^{(N)}=nI$.\\
 We will say that an $n$-tuple $U^{(N)}$ satisfying \eqref{in1} is a  $\vp$-quantum expander. We refer the reader to the survey \cite{BA2} for more information and references on quantum expanders.

 In analogy with the classical expanders (see below),  one seeks to exhibit 
 (and hopefully to construct explicitly ) sequences
  $\{U^{(N_m)}\mid m\ge 1 \}$ of $n$-tuples of $N_m\times N_m$ unitary  matrices  
   that are $\vp$-quantum expanders with $N_m\to \infty$ while $n$ and $\vp>0$ remain fixed.
  
 When $G$ is a finite group generated by $S=\{t_1,\cdots,t_n\}$
 the associated Cayley graph $\cl G(G,S)$  is said to have  a spectral gap if
 the left regular representation  $\lambda_G$ satisfies
   \begin{equation}\label{in0bis}\|\sum \lambda_G(t_j)_{|{\bb I}^\perp}\|< n(1-\vp)
   \end{equation}
    where ${\bb I}$ denotes the constant
 function $1$ on $G$.
 Obviously, this is equivalent 
 to the condition that  the unitaries $U_j=\lambda_G(t_j)$ satisfy \eqref{in1}
 when restricted to  diagonal matrices $x$ (here $N=|G|$).
 In this light, quantum expanders appear as a non-commutative version of the classical ones.\\
 More precisely, \eqref{in0bis} holds iff  the unitaries $U_j=\lambda_G(t_j)$ satisfy \eqref{in1}
  for all $x$ in the orthogonal complement of right translation operators. This is easy to deduce from
  the decomposition into irreducibles of $\lambda_G\otimes \bar \lambda_G$, in which the component of the  trivial
  representation corresponds to the restriction to right translation operators.
  
  In addition, for any irreducible representation $\pi$ of $G$, \eqref{in0bis} implies that the unitaries
  $(\pi(t_j))$ satisfy \eqref{in1} because the non trivial irreducible components
  of the representation $\pi \otimes \bar \pi$ are  contained in $\lambda_G$. See Remark \ref{kaz}
  for more on this.
  
 A  sequence of Cayley graphs $\cl G(G^{(m)},S^{(m)})$ constitutes an expander
 in the usual sense  if  \eqref{in0bis} is satisfied with $\vp>0$ and $n$ fixed while $|G^{(m)}|\to \infty$.

Expanders (equivalently expanding graphs) have been extremely useful, especially (in the applied direction)
since Margulis and Lubotzky-Phillips-Sarnak obtained explicit constructions
(as opposed to random ones). We refer to \cite{Lu,HLW}
for more information and references.
\\They have also been used with great success for operator algebras and in operator theory (see e.g. \cite{V,DS,JP} see also \cite{P4,BO}). In \cite{JP}, is crucially used the fact that 
when the dimensions $N,N'$ 
are suitably different, say if $N$ is much larger than $N'$, and $U^{(N)}$ satisfies   \eqref{in1} 
then $U^{(N)}$ and  $U^{(N')}$ are separated in the sense that there is a fixed $\delta=\delta(\vp)>0$
such that 
$\forall x\in M_{N\times N'}\quad \|\sum U_j^{(N)} x {U_j^{(N')}}^* \|_2\le n(1-\delta)\|x\|_2$ 
 (see Remark \ref{pet} for more on this).

Motivated by operator theory considerations, it is natural to wonder what happens when  
$N=N'$. We will say that two $n$-tuples $u=(u_j)$ and  $v=(v_j)$ of $N\times N$ unitary matrices are  $\delta$-separated if 
$$\forall x\in M_{N}\quad \|\sum\nolimits_1^n u_j x {v_j}^* \|_2\le n(1-\delta)\|x\|_2.$$
Equivalently this means that
$$\|\sum\nolimits_1^n u_j \otimes \bar  {v_j}\|\le n(1-\delta)$$
where $\bar {v_j}$ denotes the complex conjugate of the matrix $v_j$, and the norm
is the operator norm on $\ell_2^N \otimes \ovl{ \ell_2^N} $.
This can be interpreted in operator space theory as a rough sort of orthogonality related to the
``operator space Hilbert space OH".

Note for example that when \eqref{in0bis} holds then, for any pair of inequivalent irreducible
representations $\pi,\sigma$ on $G$, the $n$-tuples
$(\pi(t_j))$ and $(\sigma(t_j))$ are $\vp$-separated.

Let $U(N)\subset M_N$ denote the  group of unitary matrices. The main result of \S 1 asserts that for any $0<\d<1$ there is a 
  constant  $\beta=\beta_\d>0$   such that for each $0<\vp<1$,
for all sufficiently large integer $n $ (i.e. $n\ge n_0(\vp,\d)$), for any integer $N$
 there is a $\delta$-separated family
 $\{u(t)\mid t\in T\}\subset U(N)^n$ of   $\vp$-quantum expanders such that
$$|T|\ge \exp{\beta nN^2}.$$

Thus we can ``pack"  as many as $m=\exp{\beta nN^2}$  $\delta$-separated 
$\vp$-quantum expanders inside $U(N)^n$. This number $m$ is remarkably large. In fact,
in some sense  it is as large as can be. Indeed, it is known (\cite{Was,HRV}, see also Remark \ref{hrv})
that the maximal $m$ is at most $\exp{\beta' nN^2}$ for some   constant $\beta'$.

In \S 2, we use quantum expanders to investigate the analogue for operator spaces of a well known geometric 
property of Euclidean space: The unit sphere in a Hilbert space is
smooth. Equivalently all its points admit a unique norming functional. In our extension of this,
``norming" will be with respect to the operator space  duality. Moreover,   unicity has to be understood modulo an equivalence relation:
for any $x=(x_j)\in M_N(E)^n$ we define $Orb(x)$ as the set of all $x'$
of the form $x'=(ux_jv)\in M_N(E)^n$ for some $u,v\in U(N)$. Then if $x$ is ``norming" some point,
any $x'\in Orb(x)$ is also `norming" that same point. When $E$ is an operator space
and $x\in M_N(E)$,  we will say that $y\in M_N(E^*)$ $M_N$-norms  $x$ if
$\|\sum x_j \otimes y_j\|= \|x\|_{M_N(E)}  \|y\|_{M_N(E^*)}$. 
We will say that $x$ is $M_N$-smooth in $M_N(E)$ if 
the only points $y$ with $\|y\|_{M_N(E^*)}=1$  that $M_N$-norm $x$ are all in a single orbit in $M_N(E^*)$.
Let us now turn to the case $E=OH_n$. There we show that, if $x\in U(N)^n$ is viewed
as an element of $M_N(\ell_2^n)$, then $x$ is $M_N$-smooth in $M_N(E)$ iff
$x$ is an $\vp$-quantum expander for some $\vp>0$.

More generally, in Lemma \ref{lem7} we prove a more precise quantified version of this: if $x$ is an $\vp$-quantum expander 
and if two points $y,z\in M_N(E^*)$ both  $M_N$-norm $x$ up to some error $\delta$, then the distance
of the orbits $Orb(y)$ and $Orb(z)$ is uniformly small, i.e. majorized by a function $f_\vp(\delta)$ that tends to 
0 when $ \delta\to 0$. Here the distance is meant 
with respect to the renormalized Euclidean norm $y\mapsto (nN)^{-1/2} \|y\|_2 $
for which
any $y\in U(N)^n$ has norm 1 (where $\|.\|_2$ denotes here the norm in $\ell_2(n\times N^2)$).

This also has a geometric application. Consider the following problem
for an $n$-dimensional normed space $E$: Given a constant $C>1$, estimate the minimal number $k=k_E(C)$
of functionals $f_1,\cdots f_k$ in the dual $E^*$ such that   
$$\forall x\in E\quad  \sup_{1\le j\le k} |f_j(x)|  \le  \|x\|\le C \sup_{1\le j\le k} |f_j(x)| . $$
Geometrically this means that  (in the real case)  the symmetric convex body that is the unit ball of $E^*$
is equivalent (up to the factor C) to a  polyhedron with vertices included in $\{\pm f_j\}$ and hence with at most
$2k$ vertices (so its polar, that is equivalent to the unit ball of $E$, has at most $2k$  faces).
For instance, the $n$-dimensional  cube has $2^n$ vertices and $2n$ faces.
When $E$ has (real) dimension $n$ it is well known (see e.g. \cite[p.49-50]{P-v}) that
$$k_E(C)\le (\frac{3C}{C-1})^n.$$ 
For example if $C=2$ we have $k_E(C)\le 6^n.$ This  exponential order of growth in $n$ is optimal
for $E=\ell_2^n$ (or $\ell_p^n$ for $1\le p<\infty$);  
but of course $k_E(C)=n$ for $E=\ell_\infty^n$, and there is important available information and a conjecture (see \cite{Pm})
about conditions on a general sequence $\{E(n)\mid n\ge 1\}$ with $\dim({E(n)})=n$ ensuring that 
$k_{E(n)}\ge \exp{cn}$ for some $c>0$.

We now describe the matricial analogue of $k_E$ that we estimate using quantum expanders.
Let $E$ be an operator space. Fix an integer $N\ge 1$. We denote by $k_E(N,C)$
the smallest $k$ such that there are linear maps $f_j:\ E\to M_N$ ($1\le j\le k$) satisfying
   $$\forall x\in M_N(E)\quad  \sup_{1\le j\le k} \|(Id\otimes f_j)(x)\|_{M_N(M_N)}   \le  \|x\|_{M_N(E)}\le C  \sup_{1\le j\le k} \|(Id\otimes f_j)(x)\|_{M_N(M_N)} . $$
It is not hard to adapt the corresponding Banach space argument to show
that for \emph{any} $n$-dimensional $E$, any $ C>1$ and any $N$ we have $$  k_E(N,C)\le (\frac{3C}{C-1})^{2nN^2}  =\exp 2 \log(\frac{3C}{C-1}) nN^2 .$$
 Using the   "packing" of $\vp$-quantum expanders described above, we can show
 that the operator space version of Hilbert space (i.e. the space $OH$ from \cite{P3})
 satisfies a lower bound of the same order of growth, namely we show for $E=OH_n$
 (see Theorem \ref{os})  
there are numbers $C_1 >1$ , $b>0$  
          such that for any $n$ large enough  and any $N$
         we have
       \begin{equation}\label{33} k_{E}(N,C_1)\ge \exp{ b nN^2}.\end{equation}
        Moreover, this also holds for $E=\ell_1^n$ with its maximal operator space structure
        and for $E=R_n+C_n$ (see Remark \ref{more}).\\
        We also show  (see Theorem \ref{ent})   
          that for any $R>1$ and  for any $n,N$ suitably large
        there is a collection $\{E_t\mid t\in T_1\}$ of $n$-dimensional subspaces of $M_N$
        (each spanned by an $n$-tuple of unitary matrices) 
        with cardinality $\ge  \exp{ \beta_R nN^2}$  such that
        the $cb$-distance $d_{cb}(E_s,E_t)$ of any distinct pair in $T_1$ satisfies
        $$d_{cb}(E_s,E_t)\ge R.$$
        The $cb$-distance $d_{cb}$ is the analogue of the Banach-Mazur distance
        for operator spaces. The preceding 
        shows that the metric entropy of the space of $n$-dimensional  operator spaces
        equipped with the  (so-called) ``distance" $d_{cb}$ is extremely large
        for small distances. This
        can be viewed as a somewhat more quantitative version
        of the non-separability of the space of $n$-dimensional  operator spaces
        first proved in \cite{JP}.  We plan to return to this in a future publication (see \cite{Pbm}).
        
The above \eqref{33} suggests that the class of finite dimensional  operator spaces $E$
such that $\log k_{E}(N,C)/N^2 \to 0$ should be investigated. We call such spaces
matricially subGaussian.

In the forthcoming paper \cite{Psub} we introduce
a  class of operator spaces, that we call ``subexponential", for which the same
Grothendieck type  factorization theorem from  \cite{JP,PS} still holds
(see the recent paper \cite{RV} for simpler proofs of the latter). We also
give there examples of non-exact subexponential operator spaces or $C^*$-algebras.

The definition of ``subexponential" involves the growth of
a sequence of integers $N\mapsto K_E(N,C)$ attached to an operator space $E$ (and a constant $C>1$),
in a way that is similar but seems different from $k_E(N,C)$.
We denote by $K_E(N,C)$
the smallest $K$ such that there is a single (embedding) linear map $f:\ E\to M_K$  satisfying
   $$\forall x\in M_N(E)\quad    \|(Id\otimes f)(x)\|_{M_N(M_K)}   \le  \|x\|_{M_N(E)}\le C  \|(Id\otimes f)(x)\|_{M_N(M_N)} . $$
 Roughly the latter sequence
is bounded iff $E$ is  exact with exactness constant $\le C$ 
(in the sense of \cite[\S 17]{P4}) while it is such that $\log K_E(N,C)/ N\to 0$ iff $E$ is $C$-subexponential.

\n  {\bf Note:}  There is  an obvious upper bound (for a fixed constant $C$)  $K_E(N,C)\le Nk_E(N,C)$,
so the growth of $K_E$ is dominated by that of $k_E$, but we know nothing in the converse direction. Various other questions are mentioned at the end of \S \ref{s3}.

\section{Quantum Expanders}
Fix integers $n,N$.
Throughout this paper we denote by $M_N$ the space of $N \times N$ complex matrices
and by
  $U(N)$   the subset of $N \times N$ unitary matrices.\\
 We identify $M_N$  with the space $B(\ell_2^N)$ of bounded operators on the $N$-dimensional Hilbert space denoted by $\ell_2^N$.
 
 We denote by $tr$ (resp. $\tau_N$) the usual trace (resp. the normalized trace) on $M_N$. 
 Thus $ \tau_N=N^{-1} {\rm tr}$.
 We denote by $S_2^N$   the Hilbert space obtained by equipping
 $M_N$ with the corresponding scalar product. The associated norm is the classical Hilbert-Schmidt norm.
 
 For simplicity we denote by $$H=L_2(\tau_N),$$   i.e.  $H$ is the Hilbert space
obtained by equipping the space $M_N$   with the 
norm
$$\|\xi\|_H= (N^{-1} {\rm tr} (|\xi|^2)^{1/2}=N^{-1/2}\|x\|_{S_2^N} .$$ 
 We denote
$$H_0=\{I\}^\perp \subset H.$$
Throughout this paper, we  consider operators of the form $T=\sum x_j\otimes \bar y_j$,
with $x_j,y_j\in M_N$, that we view as  acting on $\ell_2^N \otimes \overline{\ell_2^N}$.
Identifying as usual  $\ell_2^N \otimes \overline{\ell_2^N}$ with $S_2^N$, we may consider    $T$ as an operator acting on 
$ M_N$ 
 defined by
$$\forall \xi \in M_N
\quad T(\xi)=\sum x_j \xi {y_j}^*,$$
and we then have
\begin{equation}\label{25} \|  \sum x_j\otimes \bar y_j\|=\sup\{ \|\sum x_j \xi y_j^*\|_2\mid \xi\in M_N\  \|\xi\|_2\le 1\} =\sup\{  |\sum {\rm tr}( x_j \xi y_j^* \eta^*)|\mid   \|\xi\|_2\le 1  \|\eta\|_2\le 1\},
\end{equation}
or equivalently
 $\|  \sum x_j\otimes \bar y_j : \ \ell_2^N \otimes \overline{\ell_2^N}\to  \ell_2^N \otimes \overline{\ell_2^N}\|=\|T:\ S_2^N\to S_2^N \|$.
Actually it will be convenient to view $T$ as an operator acting on $H=L_2(\tau_N)$.
We have trivially $$\|T\|_{B(H)}=\|T\|_{B(S_2^N)}.$$

Let $x=(x_j) \in  (M_N)^n$ and $y=(y_j)\in (M_N)^n  $.
Let $Orb(x)$ denote the 2-sided unitary orbit of $x=(x_j)$, i.e.
$$Orb(x)=\{ (ux_jv)\mid u,v\in U(N)\}.$$
We will denote
$$d(x,y)=(\sum\nolimits_j \|x_j-y_j\|^2_{L_2(\tau_N)})^{1/2},$$
and
$$d'(x,y)=\inf \{d( x',y)\mid x' \in Orb(x)\}=\inf \{d( x',y')\mid x' \in Orb(x), y'\in Orb(y)\}. $$
The last equality holds because of the 2-sided unitary invariance of the norm in 
$S_2^N$ or equivalently of $H=L_2(\tau_N)$.

\def\tr{{\rm tr}}

\begin{dfn} Fix $\delta>0$. We will say that $x,y$ in $M_N^n$ are 
$\delta$-separated if $$\|\sum x_j \otimes \bar y_j\|\le (1-\delta) \|\sum x_j \otimes \bar {x_j}\|^{1/2} \|\sum y_j \otimes \bar {y_j}\|^{1/2} .$$ \\
A family of elements is called $\delta$-separated if any two distinct members in it
are $\delta$-separated.
\end{dfn}
Let $x=(x_j)\in M_N^n$ and $y=(y_j)
 \in  M_N^n$ be normalized so that
$ \|\sum x_j \otimes \bar {x_j}\| =\|\sum y_j \otimes \bar {y_j}\|=1$. 
 Equivalently, this definition means
that for any $\xi,\eta\in M_N$ in the unit ball of $S_2^N$ we have
$$|\sum \tr (x_j\xi y_j^*\eta^*)|\le 1-\delta.$$

Using polar decompositions ${\xi}=u|{\xi}|$ and ${\eta}=v|{\eta}|$, 
$|\sum \tr (x_j{\xi}y_j^*{\eta}^*)|=|\sum \tr (x_j u|{\xi}|y_j^*|{\eta}|v^*)|$.
Let $\hat x_j=v^*x_j u$. Equivalently we have  for any $u,v$ unitary
$$|\sum \tr (\hat x_j|{\xi}|y_j^*|{\eta}|)|\le 1-\delta.$$
A fortiori, taking $|{\xi}|=|{\eta}|=N^{-1/2}I$ we find $|\sum \tau_N(   \hat x_j y_j^*)|\le 1-\delta$ and hence
$$d(\hat x, y)^2 \ge 2 \delta $$
and hence taking the inf over $u,v$ unitary, the $\delta$-separation of $x,y$ implies
 \begin{equation}\label{compa} d'( x, y)  \ge (2 \delta )^{1/2}.\end{equation}
In other words, rescaling this to the case when $n^{1/2} x_j, n^{1/2} y_j, \xi,{\eta}$  are all unitary, we have proved:

 \begin{lem} Consider $n$-tuples $x=(x_j)\in U(N)^n$ and $y=(y_j)\in U(N)^n$.
 If $x,y$ are $\delta$-separated then
 $d'( x, y)  \ge (2 \delta n)^{1/2}.$
 \end{lem}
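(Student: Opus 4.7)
The plan is that this lemma is essentially a rescaled reformulation of the inequality $d'(x,y) \ge (2\delta)^{1/2}$ derived immediately above (equation \eqref{compa}), so the proof is a bookkeeping exercise to track factors of $n$.

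First I would set $x'_j = n^{-1/2} x_j$ and $y'_j = n^{-1/2} y_j$. Since the $x_j$ and $y_j$ are unitaries, direct computation gives $\|\sum x'_j \otimes \bar{x'_j}\| = 1 = \|\sum y'_j \otimes \bar{y'_j}\|$ (the operator $\xi \mapsto n^{-1}\sum x_j \xi x_j^*$ is a unital completely positive map since the $x_j$ are unitary, and it is easily seen to have norm $1$ on $S_2^N$ by the triangle inequality together with the fact that $\xi = I$ achieves equality). So $x', y'$ are precisely in the normalized situation of the definition above. The hypothesis that $x,y$ are $\delta$-separated translates into $\|\sum x_j\otimes \bar{y_j}\| \le (1-\delta) n$, which is exactly $\|\sum x'_j \otimes \bar{y'_j}\| \le (1-\delta)$, i.e.\ $x',y'$ are $\delta$-separated in the normalized sense.

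Next I would apply the inequality \eqref{compa} (derived just above the lemma) to the pair $(x',y')$, obtaining $d'(x', y') \ge (2\delta)^{1/2}$. Finally I would translate back: from the definition of $d$,
$$d(x',y')^2 = \sum_j \|x'_j - y'_j\|_{L_2(\tau_N)}^2 = \frac{1}{n}\sum_j \|x_j - y_j\|_{L_2(\tau_N)}^2 = \frac{1}{n} d(x,y)^2,$$
so $d(x,y) = n^{1/2} d(x',y')$. The orbit $Orb(x')$ is just $n^{-1/2} Orb(x)$, so the same scaling relation holds for the infimum, giving $d'(x,y) = n^{1/2} d'(x',y') \ge (2\delta n)^{1/2}$.

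There is no real obstacle here; the only point requiring care is checking that the rescaling $x_j \mapsto n^{-1/2}x_j$ places us precisely in the regime of the preceding derivation (both in the normalization of $\|\sum x_j\otimes \bar{x_j}\|$ and in the interpretation of $\delta$-separation), and that $Orb$ commutes with this scalar multiplication so that the infimum over the orbit scales by the same factor $n^{1/2}$.
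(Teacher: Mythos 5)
Your proof is correct and is exactly the paper's argument: the paper proves the lemma by the one-line remark "rescaling this [i.e.\ \eqref{compa}] to the case when $n^{1/2}x_j, n^{1/2}y_j,\xi,\eta$ are all unitary," and your write-up is simply a careful execution of that rescaling, including the (correct) observations that $\|\sum x'_j\otimes\bar{x'_j}\|=1$ for $x'_j=n^{-1/2}x_j$ unitary-up-to-scaling and that $d'$ scales homogeneously because $Orb$ commutes with scalar multiplication.
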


Recall that we denote
$$H_0=\{I\}^\perp .$$

To any $n$-tuple $u=(u_j)\in U(N)^n$  we associate
the operator $ (\sum u_j \otimes \bar u_j)(1-P)$ on $\ell_2^N \otimes \overline{ \ell_2^N}$
where $P$ denotes the $\perp$-projection onto the scalar multiples of $I=\sum e_j\otimes \bar e_j$.
Equivalently, up to the normalization, we will consider
$$T_u:\ H_0\to H_0$$
defined for all $\xi \in H_0$ by
$$T_u(\xi)= \sum u_j\xi u_j^*.$$
We will denote by $$S_\vp=S_\vp(n,N)\subset U(N)^n$$ the set of all   $n$-tuples
$u=(u_j)\in U(N)^n$ such that 
$$\|T_u:\ H_0\to H_0\|\le \vp n.$$
Equivalently, this means $\forall x\in M_N$,   we have
$$\|\sum u_j (x -\tau_N(x)I) u_j^* \|_H\le \vp n  \| x\|_H.$$

Our goal is to prove the following:
 
\begin{thm}\label{goal}  For any $0<\d<1$ there is a 
  constant $ \beta_\d>0$   such that for each $0<\vp<1$   and
for all sufficiently large integer $n$ (i.e. $n\ge n_0$   with $n_0$ depending
on   $\vp$ and $\d$) and for all  $N\ge 1$,  there is a $\delta$-separated subset $$T\subset S_\vp$$  such that
$$|T|\ge \exp{\beta_\d nN^2}.$$
\end{thm}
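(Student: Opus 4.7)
The strategy is a standard probabilistic packing argument built on two large-deviation inequalities for Haar measure $\mu$ on $U(N)^n$, both carrying the crucial exponential factor $\exp(-c n N^2)$.

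\textbf{(I)} For $n \ge n_0(\vp)$ and all $N \ge 1$, $\mu(U(N)^n \setminus S_\vp) \le \exp(-c_1(\vp)\, nN^2)$, i.e.\ a Haar-random $n$-tuple is an $\vp$-quantum expander with overwhelming probability. This is precisely the Hastings-type estimate worked out in the appendix.

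\textbf{(II)} For every fixed $u\in U(N)^n$ and every $n \ge n_1(\d)$,
$$\mu\bigl(\{v\in U(N)^n:\ \|\textstyle\sum u_j\otimes \bar v_j\| > (1-\d) n\}\bigr) \le \exp(-c_2(\d)\, nN^2).$$
Heuristically, for a Haar-random independent pair $(u,v)$ the norm $\|\sum u_j\otimes \bar v_j\|$ concentrates near $2\sqrt n$ (the free-probability norm of a sum of $n$ free Haar unitaries), which for $n$ large is well below the threshold $(1-\d) n$; a version of the same Weingarten / moment-method technology that delivers (I), now applied to $\sum u_j\otimes \bar v_j$ with $u$ fixed, yields the displayed bound.

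Granting (I) and (II), the extraction is routine. Fix $\beta_\d>0$ to be chosen and set $M = \lceil \exp(\beta_\d n N^2)\rceil$. Draw $v^{(1)},\dots,v^{(M)}$ i.i.d.\ from $\mu$ and let
$$A = \{i:\ v^{(i)} \notin S_\vp\}, \qquad B = \{(i,j):\ i<j,\ v^{(i)},v^{(j)}\ \text{are not $\d$-separated}\}.$$
By linearity of expectation and the two estimates,
$$\mathbb{E}|A| \le M e^{-c_1 nN^2}, \qquad \mathbb{E}|B| \le \tfrac12 M^2 e^{-c_2 nN^2}.$$
Choosing $\beta_\d < \tfrac13 \min(c_1,c_2)$ makes both expectations $o(M)$, so by Markov there is a realization with $|A|+|B|\le M/4$. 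Deleting every element of $A$ and one endpoint of each pair in $B$ leaves a $\d$-separated subset $T \subset S_\vp$ of cardinality $\ge M/2 \ge \exp(\beta'_\d n N^2)$ for a slightly smaller $\beta'_\d>0$, which is what the theorem demands.

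The main obstacle is (II). The packing number of $U(N)^n$ at any fixed scale is of order $\exp(\Theta(nN^2))$, so to fit $\exp(\beta_\d nN^2)$ well-separated points one needs a concentration exponent of order $nN^2$. A naive Gromov-type Lipschitz concentration on $U(N)^n$ (Ricci curvature $\sim N$) only delivers a deviation rate like $\exp(-c N t^2)$, which is insufficient on its own; the extra factors must be harvested either by a direct Weingarten computation of $\mathbb{E}\|\sum u_j\otimes \bar v_j\|^{2p}$ for large $p$ and Markov, or by exploiting that the threshold $(1-\d)n$ lies macroscopically above the typical value $O(\sqrt n)$ of the norm so that the deviation $t$ itself is of order $n$. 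Both routes reduce to the inequality of the appendix.
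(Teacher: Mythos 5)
Your extraction step (drawing $\exp(\beta_\d nN^2)$ i.i.d.\ Haar points and deleting the bad indices) is a legitimate alternative to the maximal-net argument, but the entire weight of the theorem rests on your estimate (II), and (II) is false as stated. Take $u_j=I$ for all $j$. Then $\|\sum u_j\otimes\bar v_j\|=\|\sum v_j\|$, and the event $\{\|\sum v_j\|>(1-\d)n\}$ contains the event that every $v_je_1$ lies in a cap of radius $\sim\sqrt\d$ around $e_1$; since a Haar-random unit vector of $\CC^N$ lands in such a cap with probability about $\d^{N}$, this event has probability at least $\exp(-CnN\log(1/\d))$ --- exponential in $nN$, not in $nN^2$. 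For this $u$ the typical value of the norm is still $O(\sqrt n)$, so your heuristic that ``threshold macroscopically above the typical value'' yields the rate $\exp(-cnN^2)$ is simply wrong, and no Weingarten or moment computation can rescue it, because the tail genuinely is of order $\exp(-\Theta(nN))$ here. The correct statement requires the hypothesis $u\in S_\vp$: it is the spectral gap of $u$ that forces any $v$ with $\|\sum u_j\otimes\bar v_j\|>(1-\d)n$ into a metric neighborhood of radius $f_\vp(\d)\sqrt n$ of the orbit $Orb(u)=\{UuV\}$, and it is only for such orbit-neighborhoods (of dimension $\sim 2N^2$, inside a space where balls of radius $c\sqrt n$ with $c<\sqrt2$ have measure at most $\exp(-KnN^2(1-c^2/2)^2)$) that one obtains a bound $\exp(-cnN^2)$. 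This implication ``spectral gap $\Rightarrow$ near-normers lie near the orbit'' is the content of Lemmas \ref{lem77} and \ref{lem7}, it is the real substance of the proof, and it is entirely absent from your proposal.

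There is a second, related gap. Since (II) can only hold for $u\in S_\vp$, your bound $\E|B|\le \tfrac12 M^2 q$ with $q=\P\{(u,v)\ \text{i.i.d.\ Haar not }\d\text{-separated}\}$ forces you, after splitting the $u$-integral over $S_\vp$ and its complement, to control $\P(S_\vp^c)$ by $\exp(-cnN^2)$ as well. Your claim (I) asserts exactly this rate, but the appendix does not prove it: it gives only $\E\|\sum U_j\otimes\bar U_j(1-P)\|\le C'\sqrt n$, whence by Tchebyshev merely $\P(S_\vp^c)\le C'\vp^{-1}n^{-1/2}$, and Hastings's lemma gives $\P(R_\vp)\to1$ with no such quantitative rate either. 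The paper sidesteps any exponential bound on $\P(S_\vp^c)$ by arguing deterministically inside $S_\vp$: it first packs $S_\vp$ in the orbit metric $d'$ (using only $\P(S_\vp)>1/2$ together with the smallness of $c\sqrt n$-balls and the $\exp(CN^2)$ covering of orbits), and then upgrades metric separation to $\d$-separation via the spectral-gap lemma. To complete your route you would have to either prove the exponential form of (I) --- a nontrivial concentration statement not contained in the paper --- or restructure the argument along the paper's lines.
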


\begin{rem}\label{mg} Actually, the proof will show that  if we are given 
sets $A_N\subset U(N)^n$ such that $\inf\nolimits_N \P(A_N)\ge \alpha>0$,
then for each $N$ we can find a subset $T$ as above with  $T\subset A_N\cap S_\vp$, but with
 $\beta_\d$ and $n_0$  
now also depending on $\alpha$.
 \end{rem}
 
   \begin{rem}\label{hrv} The order of growth  of our lower bound  $\exp \beta nN^2$ in Theorem\ref{goal}  is roughly optimal  because of the upper bound given explicitly
  in \cite{HRV} (and implicitly in \cite{Was}). The latter upper bound can be proved   as follows.
  Let $m_{\max}$ be the maximal number of a $\delta$-separated family in $U(N)^n$.
 Consider the normed space obtained by equipping   $M(N)^n$ with the norm
 $|||x|||=\|\sum x_j \otimes \bar x_j \|^{1/2}$. Then since its (real) dimension is $2nN^2$,
 by a well known volume argument (\cite[p.49-50]{P-v}) there cannot exist more than $(1+2/\delta')^{2nN^2}$
 elements in its unit ball    at mutual $|||. |||$-distance $\ge \delta'$.
 Note that  $d(x,y)\le  |||x-y|||$ for any pair $x,y$ in  $M(N)^n$. 
  Thus, 
 if $u,v\in U(N)^n$ are $\delta$-separated in the above sense
 then $x=n^{-1/2} u$ and  $y=n^{-1/2} v$ are in the $|||. |||$-unit ball and by \eqref{compa} we have 
 $|||x-y|||\ge (2\delta)^{1/2}$, therefore 
 $$m_{\max} \le (1+\sqrt {2/\delta})^{2nN^2}\le \exp\{ 2\sqrt {2/\delta}\  nN^2\}.$$
  \end{rem}
 \begin{rem}\label{kaz} Let $G$ be a Kazhdan group (see \cite{BHV}) with generators $t_1,\cdots,t_n$, so that there
 is $\delta>0$ such that
 $\|\sum\nolimits_1^n \pi(t_j)\|\le n(1-\delta)$ for any unitary representation
 without any invariant (non zero) vector. Let $\cl I= \cl I(N)$ denote the set of
 $N$-dimensional  irreducible representations $\pi :\ G\to U(N)$. It is known (see \cite{BHV})
 that the latter set is finite and in fact there is a uniform bound on $|\cl I(N)|$ for each $N$.
  For any $\pi \in \cl I$
 we set $$u^\pi_j=\pi(t_j).$$
 Then (here by   $\pi\not= \sigma$  we mean $\pi$ is not equivalent to $\sigma$)
 $$\sup_{\pi\not= \sigma\in \cl I} \|\sum u^\pi_j\otimes \overline{u^\sigma_j} \|\le n(1-\delta),$$
 so that the family $\{u^\pi\mid \pi \in \cl I\}\subset U(N)^n$ is $\delta$-separated in the above sense.
 By the preceding Remark, we know $|\cl I(N)|\le m_{\max}   \le    \exp  c_\delta nN^2$.
 The problem to estimate the maximal possible value of $|\cl I (N)|$ when $N\to \infty$
 (with $\delta$ and $n$ remaining fixed, but $G$ possibly varying)
 is investigated in \cite{MW}:  some special cases are constructed in \cite{MW}
 for which   $|\cl I (N)|$ grows like $\exp c N$, however we feel that Theorem
 \ref{goal} gives evidence that there should exist cases for which
 $|\cl I (N)|$ grows like $\exp c N^2$.
  \end{rem}
  
  \begin{rem} Recall (see \cite[p. 324. Th. 20.1]{P4}) that for any $n$-tuple of unitary operators on any Hilbert space $H$ we have $$\|\sum u_j\otimes  \bar u_j \|\ge 2\sqrt{n-1} .$$
\end{rem}
Note that $2\sqrt{n-1}<n$ for all $n\ge 3$ (so there is also an $0<\vp<1$ such that 
$2\sqrt{n-1} +\vp n<  n$).

   Let $0<\vp<1$. In analogy with Ramanujan graphs (see \cite{Lu}) 
   an $n$-tuple $u=(u_j)\in U(N)^n$ will be called
$\vp$-Ramanujan if 
$$\|T_u:\ H_0\to H_0\|\le 2\sqrt{n-1} +\vp n .$$
We will denote by $$R_\vp=R_\vp(n,N)\subset U(N)^n$$ the set of all   such $n$-tuples.\\
We refer to \cite{Lu,HLW} for more information on expanders and
Ramanujan graphs.

The next result due to Hastings \cite{Ha} has been a crucial inspiration for our work:
 \begin{lem}[Hastings]\label{has} If we equip $U(N)^n$ with its normalized Haar measure $\P$,
 then for each $n$ and $\vp>0$ the set $  R_{\vp}(n,N)$ defined above satisfies
 $$\lim_{N\to \infty}\bb P(R_{\vp}(n,N)) = 1.$$
\end{lem}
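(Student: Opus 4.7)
The plan is to combine an expectation bound for $\|T_u|_{H_0}\|$ with a concentration-of-measure argument on $U(N)^n$. The spectral bound one aims for, $2\sqrt{n-1}$, is exactly the norm of $\sum_j \lambda_j \otimes \bar\lambda_j$ restricted to the non-trivial part of the regular representation of the free group $F_n$ (Kesten's theorem), so the strategy is to show that random Haar unitaries mimic free Haar unitaries well enough, and then to promote the in-expectation statement to a high-probability one.

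For the expectation bound, I would run the moment method. Since $H_0$ has dimension $N^2-1$ and
\[ \|T_u|_{H_0}\|^{2k} \le \tr_{H_0}\bigl((T_u T_u^*)^k\bigr) \le (N^2-1)\,\|T_u|_{H_0}\|^{2k}, \]
it suffices to control $\mathbb E\,\tr_{H_0}\bigl((T_u T_u^*)^k\bigr)$ for $k=k(N)$ growing slowly with $N$. Expanding the trace as a sum over words in the $u_j$'s and integrating against Haar measure via the Weingarten calculus, the contribution of each pair of permutations is weighted by $\mathrm{Wg}_N(\sigma\tau^{-1})$, whose leading asymptotic is $N^{-2k + \text{(\# cycles)}}$. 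The planar (non-crossing) terms reproduce exactly the free-probability moments of $\sum \lambda_j \otimes \bar\lambda_j$ on the non-trivial component, giving $(2\sqrt{n-1})^{2k}$, while the non-planar terms contribute a $1+O_k(N^{-2})$ correction — this is the inequality the paper isolates in its appendix. Taking $k \sim \log N$ and extracting $2k$-th roots yields $\mathbb E\,\|T_u|_{H_0}\| \le 2\sqrt{n-1} + o_N(1)$, after subtracting off the trivial-subspace contribution $n$ coming from $T_u(I)=nI$.

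For concentration, the map $u \mapsto \|T_u|_{H_0}\|$ is Lipschitz on $U(N)^n$ (with its $\ell_2$-product Hilbert–Schmidt metric), with a Lipschitz constant depending only on $n$; combined with the dimension-free Ricci curvature lower bound of order $1$ on $U(N)$ and the resulting log-Sobolev inequality for Haar measure, one obtains sub-Gaussian tails
\[ \mathbb P\bigl(\|T_u|_{H_0}\| - \mathbb E\|T_u|_{H_0}\| > t\bigr) \le 2\exp(-c N t^2). \]
Applying this with $t = \vp n/2$ and combining with the expectation bound, both terms in
\[ \mathbb P\bigl(\|T_u|_{H_0}\| > 2\sqrt{n-1} + \vp n\bigr) \le \mathbb P\bigl(\|T_u|_{H_0}\| - \mathbb E\|T_u|_{H_0}\| > \vp n/2\bigr) + \mathbf 1_{\mathbb E\|T_u|_{H_0}\| > 2\sqrt{n-1}+\vp n/2} \]
vanish for $N$ large. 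The main obstacle is the moment estimate: controlling the Weingarten sum uniformly in a growing $k=k(N)$ requires polynomial bounds on $\mathrm{Wg}_N(\sigma)$ across permutations of up to $2k$ letters and a combinatorial identification of the leading term with the Kesten spectral radius. The concentration half is comparatively routine once the Lipschitz constant is estimated.
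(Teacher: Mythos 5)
You should first note that the paper does not actually prove this lemma: it is quoted from Hastings's paper \cite{Ha}, and the author deliberately sidesteps it, proving instead in the appendix a weaker substitute (with $C'\sqrt n$ in place of $2\sqrt{n-1}$) by an entirely soft route: write $Y_j=U_j|Y_j|$, use Schur's lemma on $U\otimes\bar U$ to identify the conditional expectation $\cl E(Y_j\otimes\bar Y_j)(1-P)$ with $\chi_N\,(U_j\otimes\bar U_j)(1-P)$, check $\inf_N\chi_N>0$, decouple to $\sum a_jY_j\otimes\bar Y_j'$, and invoke Haagerup--Thorbj{\o}rnsen-type moment bounds for Gaussian sums. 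That argument trades the sharp constant for robustness (uniformity in $N$, matrix coefficients) and avoids all Weingarten combinatorics. Your proposal instead aims at the sharp statement, which is a much harder target.

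As a proof of the sharp statement, your proposal has a genuine gap exactly where you flag ``the main obstacle.'' The architecture (moment method for $\E\|T_u|_{H_0}\|\le 2\sqrt{n-1}+o(1)$, then Lipschitz concentration on $U(N)^n$ to upgrade to high probability) is the right one, and the concentration half is indeed routine: $u\mapsto\|T_u|_{H_0}\|$ is $2\sqrt n$-Lipschitz in the product Hilbert--Schmidt metric, the function is invariant under the central directions $u_j\mapsto e^{i\theta_j}u_j$ so the usual $SU(N)$ log-Sobolev/concentration applies, and your union of the two events is fine. But the expectation bound is not a computation you have carried out; it is the theorem. Since $\tr_{H_0}((T_uT_u^*)^k)$ can only beat the dimension factor $N^2-1$ if $k\gtrsim\log N$, you must control the Weingarten expansion uniformly over pairs of permutations in $S_{2k}$ with $k$ growing, i.e.\ over $((2k)!)^2$ terms whose individual asymptotics $\mathrm{Wg}_N(\sigma)\approx N^{-2k+\#\mathrm{cycles}}$ carry error terms that degrade combinatorially in $k$; one must also carefully project out the trivial eigenvalue $n$, whose contribution $n^{2k}$ to the unrestricted trace swamps the target $(2\sqrt{n-1})^{2k}$. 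Establishing that the non-planar contributions stay $O_k(N^{-2})$ \emph{uniformly} in this regime, and that the planar terms match the free moments, is essentially the strong asymptotic freeness of independent Haar unitaries (Collins--Male, Bordenave--Collins); Hastings's own treatment of this point is at a physics level of rigor. Deferring that step means the proposal is a correct plan but not a proof. If you only need the lemma as input to the rest of the paper, the appendix's Gaussian comparison is the economical path; if you want the sharp constant, you must either import strong asymptotic freeness as a black box or reproduce its moment estimates.
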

This is  best possible in the sense that  Lemma \ref{has} fails if $2\sqrt{n-1}$ is replaced 
(in the definition of $R_{\vp}(n,N)$) by
any smaller number. However,  we do not really need this sharp form of
Lemma  \ref{has} (unless we   insist on making $n_0(\vp)$ as small as possible in Theorem \ref{goal}).
So we give in the appendix a quicker proof of a result that suffices
for our needs (where  $2\sqrt{n-1}$ is replaced  by $C'\sqrt{n}$, $C'$ being a numerical constant)
and which in several respects gives us better estimates than Lemma \ref{has}.

Lemma \ref{lem5} below can be viewed as  a non-commutative variant of results in \cite{P-e}
(see also  \cite{P-s} where the non-commutative case is already considered) in the style of \cite{MP} (see also \cite{E,Pa}). We view this
  as a (weak) sort of non-commutative Sauer lemma, that it might be worthwhile to strengthen.

In the next two lemmas, we equip $U(N)^n$ with the metric $d$ (we also use $d'$), and
for any subset $A\subset U(N)^n$ and 
 any $\vp>0$ we denote by $N(A,d,\vp)$ the smallest number of open $d$-balls 
 of radius $\vp$ with center in $U(N)^n$
 that cover $A$.

 \begin{lem}\label{lem5} Let $a>0$. Let $A\subset U(N)^n$ be a (measurable) subset with $\bb P(A)> a$.
  Then, for any $c<\sqrt 2$, 
  $N(A, d, c \sqrt n) \ge a\exp{Kr nN^2}$
  where $r=(1-c^2/2)^2$ and $K$ is a universal constant.
  \\
 Assuming moreover that $a\ge \exp{-K r nN^2/2}$ (note that there is $n_0(a,r)$ so that this holds for all $n\ge n_0(a,r)$ and all $N\ge1$),
  we find that $N(A, d,  c\sqrt n) \ge  \exp{b nN^2}$
 where $b=Kr/2 $.
 \end{lem}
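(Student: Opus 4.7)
The strategy is a volumetric counting argument powered by concentration of measure on the unitary group: I will show that every $d$-ball of radius $c\sqrt n$ in $(U(N)^n,d)$ has $\PP$-mass at most $2\exp(-K r n N^2)$, then conclude from the union bound
\[
a<\PP(A)\le N(A,d,c\sqrt n)\cdot 2\exp(-KrnN^2).
\]

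To bound the ball mass, fix an arbitrary center $x_0=(x_{0,j})\in U(N)^n$ and study
\[
F(U)\;=\;d(U,x_0)^2\;=\;\sum_{j=1}^n \|U_j-x_{0,j}\|_{L_2(\tau_N)}^2\;=\;\sum_{j=1}^n\bigl(2-2\,\mathrm{Re}\,\tau_N(U_j x_{0,j}^*)\bigr).
\]
Since $\E\tau_N(U_j x_{0,j}^*)=0$ for Haar-distributed $U_j$, one has $\E F=2n$; each summand is $2$-Lipschitz on $(U(N),L_2(\tau_N))$, so by Cauchy--Schwarz $F$ is $2\sqrt n$-Lipschitz on $(U(N)^n,d)$. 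The main analytic input is the standard Gromov--Milman (equivalently log-Sobolev/Bakry--Emery on $SU(N)$) concentration inequality: there is an absolute $\kappa>0$ such that every $1$-Lipschitz $f:(U(N),L_2(\tau_N))\to\RR$ satisfies $\PP(|f-\E f|>t)\le 2\exp(-\kappa N^2 t^2)$. The $N^2$ scaling, as opposed to the $N$ scaling familiar from the unnormalized Hilbert--Schmidt metric, reflects the rescaling $L_2(\tau_N)=N^{-1/2}\|\cdot\|_2$. This inequality tensorizes to $(U(N)^n,d)$ with no loss of constant, and applied to $F/(2\sqrt n)$ with deviation $(2-c^2)\sqrt n/2$ it yields
\[
\PP\bigl(d(U,x_0)<c\sqrt n\bigr)\;=\;\PP\bigl(F<c^2n\bigr)\;\le\;2\exp\!\Bigl(-\kappa N^2\,\tfrac{(2-c^2)^2 n}{4}\Bigr)\;=\;2\exp(-KrnN^2),
\]
since $(2-c^2)^2/4=(1-c^2/2)^2=r$, with $K=\kappa$.

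Combining this ball-mass estimate with the covering inequality proves the first assertion; the harmless factor $2$ can be absorbed by slightly decreasing $K$ (or is trivially irrelevant when $a\exp(KrnN^2)\le 1$, since then $N(A,d,c\sqrt n)\ge 1$ suffices). The second assertion is purely algebraic: if $a\ge \exp(-KrnN^2/2)$ then $a\exp(KrnN^2)\ge \exp(KrnN^2/2)=\exp(bnN^2)$ with $b=Kr/2$. The only real obstacle is invoking the correct $N^2$-scaling concentration inequality on $U(N)$, but this is by now classical; once it is in hand the rest is a routine moment computation and volumetric union bound.
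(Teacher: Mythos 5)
Your proof is correct and follows the same skeleton as the paper's: cover $A$ by balls of $d$-radius $c\sqrt n$, use the union bound $a<\PP(A)\le N(A,d,c\sqrt n)\cdot\sup_{x_0}\PP(B(x_0,c\sqrt n))$, and bound the ball mass by observing that $d(U,x_0)^2<c^2n$ forces the upper deviation $\sum_j\Re\tau_N(U_jx_{0,j}^*)>n(1-c^2/2)$ of a centered linear statistic, which is sub-Gaussian at rate $\exp(-KnN^2r)$. The only real divergence is how that sub-Gaussian bound is justified: you invoke general Lipschitz concentration on the unitary group (Gromov--Milman/log-Sobolev, tensorized), whereas the paper gives a short self-contained Chernoff argument via the polar decomposition $Y=U|Y|$ of a Gaussian matrix and $\E|Y|=\chi^{-1}I$, so that $\sum\Re\,{\rm tr}(\omega_j)$ is a conditional expectation of a genuine Gaussian and Jensen plus the Gaussian Laplace transform finish the job. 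Your route is heavier machinery for the same estimate but requires no new computation; the paper's route is elementary and makes the universal constant explicit.

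Two small points to tighten. First, the inequality $\PP(|f-\E f|>t)\le 2\exp(-\kappa N^2t^2)$ via Bakry--\'Emery holds on $SU(N)$, not directly on $U(N)$, whose Ricci curvature degenerates along the central circle; passing to $U(N)$ needs the standard extra step (fibration over the quotient circle of circumference $2\pi/\sqrt N$, or conditioning on $\det U$), which is classical but should be cited or noted. For your particular $F$, which is a sum of independent $1$-Lipschitz linear statistics, you could sidestep both this issue and the tensorization entirely by applying one-dimensional sub-Gaussian concentration to each summand and adding variance proxies. Second, the factor $2$ cannot be absorbed by ``slightly decreasing $K$'' uniformly, since $r$ may be arbitrarily small as $c\to\sqrt2$ and absorbing $2$ costs $\log 2/(rnN^2)$ in the exponent; the clean fix is to use the one-sided deviation bound $\PP(f<\E f-t)\le\exp(-\kappa N^2t^2)$ (Herbst gives this without the $2$), which is all you need here and is exactly the form the paper obtains.
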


\begin{proof} Let $\Omega=U(N)^n$. We may clearly assume
(by Haar measure inner regularity) that $A$
is compact. Let ${\cl N}=N(A,d,c\sqrt{n})$.
By definition,   $A$ is included in the union of ${\cl N}$ open balls
 with $d$-radius $c \sqrt n$.
By translation invariance of $d$ and ${\bb P}$, all these balls
have the same ${\bb P}$-measure equal to $F(c)$.
 Therefore
$a<\bb P(A) \le {\cl N}F(c)$ and hence
$$aF(c)^{-1}<    {\cl N} . $$
Thus we need a lower bound for $F(c)^{-1}$.
Let $u$ denote the unit in $U(N)^n$ so that $u_j=1$ for $1\le j\le n$. Using a ball centered at $u$ to compute $F(c)$,
we have
$$F(c)={\bb P}\{ \omega\in U(N)^n\mid \sum\nolimits_1^n {\rm tr}(|\omega_j-1|^2)<c^2nN\}.$$
Since $\sum\nolimits_1^n {\rm tr}(|\omega_j-1|^2)=2Nn-2\sum\nolimits_1^n\Re {\rm tr}(\omega_j)$,
we have
$$F(c)={\bb P}\{ \omega\mid
\sum\nolimits_1^n\Re {\rm tr}(\omega_j)>nN(1-c^2/2)\}.$$
We will now use the known subGaussian property of  $\sum\nolimits_1^n\Re {\rm tr}(\omega_j)$:
there is a universal constant $K$ such that for any $\lambda>0$ we have
 \begin{equation}\label{eq66}{\bb P}\{ \omega\mid
\sum\nolimits_1^n\Re {\rm tr}(\omega_j) >\lambda \}\le  \exp{ -K \lambda^2/n}.\end{equation}
Taking this for granted, let us complete the proof.
Fix $c<\sqrt 2$. Recall $r=(1-c^2/2)^2>0$,
this yields
$$F(c)\le  \exp{ -K nN^2 r}.$$
Thus we conclude that
$$    {\cl N}>a\exp{ Kr nN^2} .$$
Taking $c=1, r=1/4$, the last assertion becomes obvious.

Let us now give a quick   argument  for the known inequality  \eqref{eq66}:
We will denote by $Y^{(N)}$ a random $N\times N$-matrix with i.i.d. 
  complex Gaussian entries with mean zero and second moment equal to $N^{-1/2}$, and we
   denote by $(Y_j^{(N)})$ a sequence of i.i.d. copies of $Y^{(N)}$.
It is well known that the polar decomposition $Y^{(N)}=U |Y^{(N)}|$ is such that
$U$ is uniformly distributed over $U(N)$ and independent of $ |Y^{(N)}|$. Moreover
  there is an absolute constant $\chi>0$ such that $\E |Y^{(N)}|= \chi^{-1} I$. See e.g.
  \cite[p. 80]{MP}. 
  Therefore, we have a conditional expectation operator $\cl E$ (corresponding to integrating
  the modular part) such that
  $\sum \Re {\rm tr}(\omega_j)=\chi {\cl E}(\sum \Re {\rm tr}(Y_j^{(N)})$,
  where $\omega_j$ denotes the unitary part in the polar decomposition of $Y_j^{(N)}$.\\
  Then,
  since $x\mapsto \exp w x$ is convex for any $w>0$, we have the announced subGaussian property
  $$\E \exp w \sum \Re {\rm tr}(\omega_j)\le \E \exp w \chi\sum \Re {\rm tr}(Y_j^{(N)})=\exp (\chi^2 w^2 n/4),$$
  from which follows, by Tchebyshev's inequality, that 
  $\P\{  \sum \Re {\rm tr}(\omega_j)>\lambda\} \le \exp (\chi^2 w^2 n/4-\lambda w ) $ and optimising $w$ so that
  $\lambda=\chi^2 w n/2$ we finally obtain
  $$\P\{  \sum \Re {\rm tr}(\omega_j)>\lambda\} \le \exp (-K\lambda^2/ n),$$
with $K=\chi^{-2}$. The above simple argument follows \cite[ch. 5]{MP}, but, in essence, \eqref{eq66}
can traced back to \cite[Lemma 3]{FTR}.
 \end{proof}

  The next Lemma is a simple covering argument.
   \begin{lem}\label{lem6} Fix $b,c>0$.  Let $A\subset U(N)^n$ be a subset 
  with $N(A,d,c\sqrt{n}) \ge \exp{bnN^2} $. Fix $c'<c$ and $b'<b$. Then there is
  an integer $n_0$ (depending only on
  $b-b'$ and $c-c'$ and independent of $N$) such that if $n\ge n_0$ 
  we have $N(A,d',c'\sqrt{n}) \ge \exp{b'nN^2}$ and  there is a subset $T'\subset A$
  with $|T'|\ge \exp{b'nN^2}$  such that $d'(s,t)\ge c'\sqrt{n}$
  $\forall s\not=t\in T'$.
 \end{lem}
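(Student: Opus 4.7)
The plan is to compare the covering numbers in the two metrics by noting that a single $d'$-ball is the $d$-neighborhood of an orbit, and then to use a volume estimate on a net of that orbit. Concretely, if $y$ lies in the $d'$-ball of radius $c'\sqrt n$ around some $x \in U(N)^n$, then by definition there exist $u,v \in U(N)$ with $d((ux_jv)_j, y) < c'\sqrt n$. Approximating $(u,v)$ by a $\rho$-net $\mathcal F \subset U(N)$ in the $L_2(\tau_N)$-metric, the two-sided unitary invariance of the $L_2(\tau_N)$-norm together with the unitarity of each $x_j$ yields
$$ \|ux_jv - u'x_jv'\|_{L_2(\tau_N)} \le \|u-u'\|_{L_2(\tau_N)} + \|v-v'\|_{L_2(\tau_N)}, $$
so that the orbit map $(u,v) \mapsto (ux_jv)_j$ is Lipschitz with constant $\sqrt n$ from $U(N)\times U(N)$ (equipped with the sum of $L_2(\tau_N)$-metrics) into $(U(N)^n, d)$. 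Picking $\rho = (c-c')/2$ and applying the triangle inequality then gives $d((u'x_jv')_j, y) < c\sqrt n$. Hence the $d'$-ball of radius $c'\sqrt n$ about $x$ is covered by at most $|\mathcal F|^2$ many $d$-balls of radius $c\sqrt n$, all with centers in $U(N)^n$.

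By a standard volume-packing argument in the $2N^2$-dimensional real Euclidean space $(M_N, L_2(\tau_N))$, whose unit ball contains $U(N)$, one may choose $\mathcal F$ with $|\mathcal F| \le (1 + 4/(c-c'))^{2N^2}$. Setting $M := |\mathcal F|^2 \le (1 + 4/(c-c'))^{4N^2}$, the observation above yields
$$ N(A, d, c\sqrt n) \le M \cdot N(A, d', c'\sqrt n). $$
Choosing $n_0$ so that $4\log(1 + 4/(c-c')) \le (b-b')\, n_0$ ensures $M \le \exp((b-b')nN^2)$ whenever $n \ge n_0$, giving the desired bound $N(A, d', c'\sqrt n) \ge \exp(b'nN^2)$.

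For the packing statement, let $T' \subset A$ be a maximal subset subject to the constraint that $d'(s,t) \ge c'\sqrt n$ for all distinct $s,t \in T'$. Maximality forces every $a \in A$ to lie within $d'$-distance $<c'\sqrt n$ of some $t \in T'$, so the $d'$-balls of radius $c'\sqrt n$ centered at the points of $T' \subset U(N)^n$ cover $A$; hence $|T'| \ge N(A, d', c'\sqrt n) \ge \exp(b'nN^2)$. The only step requiring genuine input is the unitary-invariance computation bounding the Lipschitz constant of the orbit map; the rest is classical covering-packing manipulation, so I expect no substantive obstacle beyond keeping track of constants.
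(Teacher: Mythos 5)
Your proof is correct and follows essentially the same route as the paper: cover each orbit (hence each $d'$-ball) by $d$-balls using a net of $U(N)\times U(N)$ of cardinality $\exp(O(N^2))$, compare the two covering numbers, and absorb the net factor into $\exp((b-b')nN^2)$ for $n\ge n_0$, finishing with a maximal-separated-set argument. The only cosmetic difference is that you take the net in the $L_2(\tau_N)$-metric while the paper uses the operator norm; both give the same $(C/\rho)^{2N^2}$ cardinality bound and the same Lipschitz estimate for the orbit map.
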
 \begin{proof}   Fix $\vp>0$. It is well known that there is an $\vp$-net   ${\cl N}_\vp\subset U(N)$  with respect to the operator norm
  with $|{\cl N}_\vp|\le (K/\vp)^{2 N^2}$. Indeed, 
  since the real dimension of $M_N$ is $2N^2$, a classical volume argument
(see e.g. (\cite[p.49-50]{P-v})) produces such a net inside the unit ball of $M_N$. It can then be
adjusted to be inside $U(N)$. See also \cite[p. 175]{Sz} for more delicate estimates.
   For any  $x\in U(N)^n$, we have $d(uxv,u'xv')\le (\|u-u'\|+\|v-v'\|)\sqrt{n}$ for any $u,u',v,v'\in U(N)$.
   Therefore  
  we have $N(Orb(x), d, 2\vp \sqrt{n})\le |{\cl N}_\vp|^2\le \exp{4 N^2\log(K/\vp)}$.
  From this follows immediately that
  $$N(A,d, c'\sqrt{n}+2\vp \sqrt{n})\le  N(A,d',c'\sqrt{n})  \exp{4 N^2\log(K/\vp)}.$$
 Since $c'<c$ we can choose $\vp>0$   so that $c'+2\vp= c$.
 Then by our assumption $N(A,d, c'\sqrt{n}+2\vp \sqrt{n})= N(A,d, c\sqrt{n} )\ge  \exp{bnN^2} $.
  Thus we find
  $$N(A,d',c'\sqrt{n}) \ge \exp{bnN^2} \exp{-4 N^2\log(K/\vp)}.$$
 Since $b-b'>0$ there is clearly an integer $n_0$
  (depending only on $b-b'$ and $\vp=(c-c')/2$) such that  $  4 \log (K/ \vp)<(b-b')n$ for all $n\ge n_0$.
  Thus we obtain 
  $$N(A,d',c'\sqrt{n}) \ge \exp{b'nN^2}.$$
  The last assertion is then clear:   any maximal subset $T'\subset A$ such that $d'(s,t)\ge c'\sqrt{n}$
  $\forall s\not=t\in T'$ must satisfy (by maximality) $N(A,d',c'\sqrt{n})\le |T'|$.
  \end{proof}
  In general, for a pair  $u,v\in U(N)^n$, $\d$-separation is a much stronger condition
  than separation with respect
to the distance $d'$.
The main virtue of the next two Lemmas
is to show that for a pair $u,v\in S_\vp$ with $\vp$ suitably small, the two conditions
become essentially equivalent.
To prove these, we will now crucially use the spectral gap. 
 \begin{lem}\label{lem77} 
Let $0<\vp,\vp'<1$. Let  
  $u=(u_j)\in U(N)^n$   and $v=(v_j)\in M_N^n$ merely such that  $\|\sum v_j \otimes \bar v_j\|\le n$.
Assume $u\in S_\vp$ and also
 \begin{equation}\label{qe-2}d'(u,v)\ge \sqrt{2n(1-\vp')}.\end{equation}
Then
$$\|\sum u_j\otimes \bar v_j\|\le n\left( \vp'^{1/5} (2^{-4/5}+2^{6/5})+2\vp^{1/2}\right).$$
 Moreover, if we assume in addition that $v\in S_\vp$, then the preceding estimate can be improved to
  \begin{equation}\label{qe-3}\|\sum u_j\otimes \bar v_j\|\le n(3\vp'^{1/3}+ 2\vp).\end{equation}
Conversely,  it is easy to show that for any pair  $u,v \in M_N^n$    such that
 $\sum \tau_N(|u_j|^2)=\sum \tau_N(|v_j|^2)=n$ (in particular for any $u,v\in U(N)^n$)
  \begin{equation}\label{eq6+}\| \sum u_j \otimes \bar v_j \|\le n\vp' \end{equation}
 implies 
  \begin{equation}\label{eq6''} d'(u,v)\ge \sqrt{2n(1-\vp') }.\end{equation}
\end{lem}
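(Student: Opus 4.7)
\textbf{Converse direction.} For any $a, b \in U(N)$, expanding yields
\[
d(aub, v)^2 = \sum_{j=1}^n \tau_N(|au_jb - v_j|^2) = n + \sigma - 2 \Re \sum_{j=1}^n \tau_N(b^* u_j^* a^* v_j),
\qquad \sigma := \sum_j \tau_N(|v_j|^2).
\]
Taking the infimum over $a, b$ gives $d'(u,v)^2 = n + \sigma - 2\sup_{a,b}\Re \sum_j \tau_N(b^* u_j^* a^* v_j)$, and inserting appropriate rescaled unitaries into \eqref{25} (using the equivalent adjoint form of the operator norm) shows that each term inside the sup is bounded in absolute value by $\|\sum u_j\otimes \bar v_j\|$. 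When $\sigma = n$, the hypothesis $\|\sum u_j \otimes \bar v_j\| \le n\vp'$ then gives $d'(u,v)^2 \ge 2n(1-\vp')$, as claimed.

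\textbf{Forward direction; setup.} The inequality $\sigma \le n$ follows from $\|\sum v_j \otimes \bar v_j\|_{B(H)} \le n$ applied to $I \in H$. Running the above identity backwards, the hypothesis $d'(u,v)^2 \ge 2n(1-\vp')$ becomes
\[
(\ast)\qquad \sup_{a, b \in U(N)} \Bigl|\sum_j \tau_N(b^* u_j^* a^* v_j)\Bigr| \le n\vp'.
\]
Set $M := \|\sum u_j\otimes\bar v_j\|$ and let $\xi, \eta \in S_2^N$ with $\|\xi\|_2 = \|\eta\|_2 = 1$ attain $M$. Polar-decomposing $\xi = \omega_1 h$, $\eta = \omega_2 k$ (with $h, k \ge 0$) and substituting $u_j \leftarrow \omega_2^* u_j \omega_1$ preserves both $u \in S_\vp$ (the spectral-gap condition \eqref{in1} is $2$-sided unitary invariant) and $(\ast)$ (absorb $\omega_1, \omega_2$ into $a, b$). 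Hence we may assume $\xi = h, \eta = k \ge 0$ and $M = \sum_j \tr(u_j h v_j^* k)$.

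\textbf{Forward direction; estimates.} Decompose $h = \alpha I + h'$, $k = \beta I + k'$ with $h', k' \in H_0$ (traceless) and $\alpha, \beta \in [0, N^{-1/2}]$, so that $N\alpha^2 + \|h'\|_2^2 = 1 = N\beta^2 + \|k'\|_2^2$. The expansion
\[
M = \alpha\beta\,\tr\!\Bigl(\sum u_jv_j^*\Bigr) + \alpha\,\tr\!\Bigl(k'\sum u_jv_j^*\Bigr) + \beta\,\tr\!\Bigl(\sum u_jh'v_j^*\Bigr) + \tr\!\Bigl(k'\sum u_jh'v_j^*\Bigr)
\]
is bounded term by term. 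The scalar--scalar term is controlled by $(\ast)$ at $a = b = I$, yielding $|\tau_N(\sum u_jv_j^*)| \le n\vp'$. For the ``traceless--traceless'' term, the spectral gap $u \in S_\vp$ enters through the splitting $\sum u_jh'v_j^* = T_u(h') + \sum u_jh'(v_j - u_j)^*$: since $h' \in H_0$, $\|T_u(h')\|_2 \le \vp n\|h'\|_2$, and the residual is estimated by Cauchy--Schwarz using $\sum \tau_N(|v_j - u_j|^2) \le 2n(1+\vp')$ (from expanding the modulus squared and invoking $(\ast)$). The two mixed terms are handled through the Schatten-$2$ norms of the traceless parts of $\sum u_jv_j^*$ and $\sum v_j^*u_j$; these follow from the trace-norm bound $\sup_{a\in U(N)}\|\sum v_j^*au_j\|_1 \le Nn\vp'$ (obtained by dualizing $(\ast)$ over unitary $b$) combined with $\|\cdot\|_2 \le \|\cdot\|_1$. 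Assembling everything yields an inequality roughly of the shape $M \le n\vp' + 2n\vp^{1/2} + C_1 n s + C_2 n\vp'/s^2$, where $s := \max(\|h'\|_2, \|k'\|_2) \in (0,1]$; optimizing in $s$ produces the exponents $\vp'^{1/5}$ and $\vp^{1/2}$ in the stated bound. The principal technical obstacle is the residual $\sum u_jh'(v_j - u_j)^*$: in the absence of spectral control on $v$, it is only amenable to the Cauchy--Schwarz estimate that introduces the $\vp^{1/2}$ loss. Under the additional hypothesis $v \in S_\vp$, a symmetric use of the spectral gap on the $v$-side (via $\|T_v^*(k')\|_2 \le \vp n\|k'\|_2$ for $k' \in H_0$) eliminates this loss, simultaneously sharpening the balance exponent from $1/5$ to $1/3$ and upgrading $\vp^{1/2}$ to $\vp$, producing $M \le n(3\vp'^{1/3} + 2\vp)$.
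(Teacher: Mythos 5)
Your converse direction and your initial reductions (reformulating \eqref{qe-2} as smallness of the bilinear form on pairs of unitaries, passing to positive extremizers via polar decomposition and the two-sided unitary invariance of $S_\vp$) are correct and match the paper. The forward direction, however, has a genuine gap, and it sits exactly where the difficulty of the lemma lies. Your decomposition $h=\alpha I+h'$, $k=\beta I+k'$ into scalar plus traceless parts produces terms such as $\beta\,\tr(\sum u_jh'v_j^*)$ and $\tr(k'\sum u_jh'v_j^*)$ whose natural bounds are of order $n\|h'\|_2$ (resp.\ $n\|h'\|_2\|k'\|_2$) with no smallness: the quantity $s=\max(\|h'\|_2,\|k'\|_2)$ is determined by the extremizers and can perfectly well equal $1$, so ``optimizing in $s$'' is not legitimate --- $s$ is not a free parameter, unlike a truncation level. (This scalar-plus-traceless decomposition does work in the paper's Lemma \ref{lem7}, but there the hypothesis is that the pairing is \emph{large}, which forces the traceless parts to be small via the spectral gap; here you want to prove the pairing is \emph{small}, so nothing forces $s$ down.) Concretely: your residual $\tr(k'\sum u_jh'(v_j-u_j)^*)$, estimated by Cauchy--Schwarz against $\sum\tau_N(|v_j-u_j|^2)\le 2n(1+\vp')$, leaves a factor $(\sum_j\|k'u_jh'\|_2^2)^{1/2}$ controlled only by fourth moments or operator norms of $h',k'$, not by their $L_2$ norms; and your mixed-term route via $\|\sum v_j^*au_j\|_1\le Nn\vp'$ and $\|\cdot\|_2\le\|\cdot\|_1$ yields $\alpha\|k'\|_2\|\sum u_jv_j^*\|_2\le N^{-1/2}\cdot Nn\vp'=N^{1/2}n\vp'$, which blows up with $N$.

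The missing idea is that $(\ast)$ controls the bilinear form $F(x,y)=(1/n)\sum\tau_N(xu_jyv_j^*)$ only on \emph{operator-norm} bounded elements (the unit ball of $M_N$ is the closed convex hull of $U(N)$, so $\|F:M_N\times M_N\to\CC\|\le\vp'$), whereas the extremizers are merely $L_2$-normalized. The paper bridges this by truncating the positive extremizers at a \emph{free} spectral level $\la$: writing $|x|=p|x|+x'$ with $p$ the spectral projection of $\{|x|\le\la\}$, the bulk term is $\le\vp'\la^2$, and the tail has $\tau_N|x'|\le\la^{-1}$ by Tchebyshev. The tails are then killed by the key estimate $|F(x,y)|\le(\tau_N|x|\,\tau_N|y|+\vp)^{1/2}$ (resp.\ $\le\tau_N|x|\,\tau_N|y|+\vp$ when also $v\in S_\vp$), proved via the factorization $F(x,y)=\langle\, |x|^{1/2}.u.V|y|^{1/2},\ |x|^{1/2}U^*v|y|^{1/2}\,\rangle_{\cl H}$, Cauchy--Schwarz, and the spectral gap applied to $\|a.u.b\|^2_{\cl H}$. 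Optimizing over the genuinely free $\la$ in $\vp'\la^2+2(\la^{-1}+\vp)^{1/2}$ gives the exponent $1/5$, and in $\vp'\la^2+2(\la^{-1}+\vp)$ the exponent $1/3$. Your plan contains neither the truncation nor this square-root factorization, so it does not reach the stated bounds.
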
 

\begin{proof} Let $\|v\|^2_{\cl H}=(1/n)\sum_j \tau_N |v_j|^2$.
Note that $\|v\|_{\cl H}\le 1$ and $\|u\|_{\cl H}\le 1$. 
 For any $x\in M_N$, we will denote $\|x\|_{L_2(\tau_N)}=(\tau_N |x|^2)^{1/2}$
 and $\|x\|_{L_1(\tau_N)}=\tau_N |x|$.
 Note for later use that a consequence of Cauchy-Schwarz is
  \begin{equation}\label{qe-1}\forall x,y \in M_N\quad 
  \|xy\|_{L_1(\tau_N)}\le \|x\|_{L_2(\tau_N)}\|y\|_{L_2(\tau_N)}.
  \end{equation}
Recall that 
$$\|\sum u_j\otimes \bar v_j\| =\sup\{ |\sum \tau_N (u_j x v_j^* y^*)| \mid x,y\in B_{L_2(\tau_N)}\}.$$
Let us denote $x.u.y=(xu_jy)_{1\le j\le n}$ and let   $F$ be 
the bilinear form on $M_N \times M_N$
defined by $$F(x,y)= \langle x.u.y, v\rangle_{\cl H}
=(1/n)\sum_j \tau_N (xu_jyv_j^*).$$
Then $$\|(1/n)\sum u_j\otimes \bar v_j\|=\|F:\ L_2(\tau_N) \times L_2(\tau_N)\to \CC\|.$$

We start by  the proof of \eqref{qe-3}, assuming that both $u,v$ belong to $S_\vp$.\\
Firstly we claim that for any $x,y\in M_N$ we have
$$|F(x,y)| \le \tau_N|x|\tau_N|y| +\vp \| (1-P)(|x|) \|_{L_2(\tau_N)}\| (1-P)(|y|) \|_{L_2(\tau_N)} $$ and hence assuming $x,y\in B_{L_2(\tau_N)}$ we have
 \begin{equation}\label{qe0}|F(x,y)| \le \tau_N|x|\tau_N|y| +\vp .\end{equation}
To check this claim
we use polar decompositions $x=U|x|$, $y=V|y|$ and we write
$$F(x,y)= \langle U|x|.u.V|y|, v\rangle_{\cl H}=(1/n)\sum_j \tau_N ([|x|^{1/2}u_jV|y|^{1/2} ] [ |y|^{1/2} v_j^*U|x|^{1/2}]$$ 
$$=\langle |x|^{1/2}.u.V|y|^{1/2} ,|x|^{1/2}U^* v|y|^{1/2}\rangle_{\cl H}. $$
Therefore
 \begin{equation}\label{qe1} |F(x,y)|\le \| |x|^{1/2}.u.V|y|^{1/2}\|_{\cl H} \|  |x|^{1/2}U^* v|y|^{1/2} \|_{\cl H}.\end{equation}
Now we observe that if we denote again
by  $T_u$ the operator acting
 on ${L_2(\tau_N)}$ defined by\\ $T_u(x)= \sum u_jxu_j^* -n \tau_N(x) I$ (equivalently $T_u=(\sum u_j \otimes \bar u_j)(1-P)$)
we have for any $a,b\in M_N$
$$\| a.u.b\|_{\cl H}^2= (1/n) \tau_N ( (n \tau_N(bb^*)+T_u(bb^*))a^*a )=\tau_N(bb^*)\tau_N(a^*a ) + (1/n) \tau_N (T_u(bb^*)a^*a )$$ 
and hence since $\|T_u\|\le \vp n$  and $T_u=(1-P)T_u(1-P)$
$$\| a.u.b\|_{\cl H}^2\le \tau_N(bb^*) \tau_N(a^*a ) +\vp \|(1-P)(bb^*)\|_{L_2(\tau_N)} 
\|(1-P)(a^*a)\|_{L_2(\tau_N)}.$$
This yields
$$\| |x|^{1/2}.u.V|y|^{1/2}\|^2_{\cl H}\le  \tau_N|x|\tau_N|y| +\vp  \| (1-P)(|x|) \|_{L_2(\tau_N)}\| (1-P)(|y|) \|_{L_2(\tau_N)}.$$
A similar bound holds for $\|  |x|^{1/2}U^* v|y|^{1/2} \|_{\cl H}$.
Thus \eqref{qe1} leads to our claim.

Secondly   by \eqref{qe-2}
 for any $U,V\in U(N)$ we have
$$\|U.u.V-v\|^2_{\cl H} \ge n^{-1} d'(u,v)^2\ge  2(1-\vp')$$
and hence
$$\Re \langle U.u.V, v\rangle_{\cl H} \le \vp'.$$
Recall that the unit ball of $M_N$ is the closed convex hull
of $U(N)$. 
Thus  we have
$$\|F:\ M_N \times M_N\to \CC\| \le \vp'.$$
 \def\la{\lambda}
Let us assume $x,y\in B_{L_2(\tau_N)}$.
Let  $p$ (resp. $q$) denote the spectral projection
of $|x|$  (resp. $|y|$) corresponding to the spectral set $\{|x|\le \lambda\}$
(resp. $\{|y|\le \lambda\}$). Note that by Tchebyshev's inequality
we have $\tau_N(1-p)
\le 1/\lambda^2$ (resp. $\tau_N(1-q)
\le 1/\lambda^2$).
Let $x'=(1-p)|x|$ and $y'=(1-q)|y|$.
By   \eqref{qe-1} (since  $ \|1-p\|_{L_2(\tau_N)}
\le  \lambda^{-1/2}$)    we have
$$ \tau_N|x'|\le  \la^{-1}.$$
Similarly
$$ \tau_N|y'|\le  \la^{-1}.$$
We now write
\begin{equation}\label{qe3}F(|x|,|y|)=F(p|x|+x',q|y|+y')=F(p|x|,q|y|)+F(x',|y|)+F(p|x|,y'). \end{equation}
By \eqref{qe0}  we have $$|F(x',|y|)|\le \tau_N | x'|  \tau_N | y|+\vp \le  \la^{-1}+\vp $$
and similarly
$$|F(p|x|,y')|\le  \la^{-1}+\vp . $$
 Thus we deduce from \eqref{qe3}
 $$|F(|x|,|y|)|\le     \vp'  \la^2 +2(\la^{-1}+\vp) .$$
 Choosing $\la= (\vp')^{-1/3}$ to minimize over $\la>0$ yields the  upper bound $3\vp'^{1/3}+ 2\vp$, when restricting to $x,y\ge 0$.
 Since our assumptions on the pair $u,v$ are shared by the pair $UuV,v$
 for any $U,V\in U(N)$, we may apply the polar decompositions $x=U|x|$
 and $y=V|y|$ to deduce the same upper bound for an arbitrary pair $x,y$.
 Thus we obtain \eqref{qe-3}. 
 
 We now turn to \eqref{qe-2}. There we assume only $u\in S_\vp$
 and $\|\sum v_j\otimes \bar v_j\|\le n$.  
 Then (since we still have $\|  |x|^{1/2}U^* v|y|^{1/2} \|_{\cl H}\le 1$)
 \eqref{qe0} can be replaced by
 \begin{equation}\label{qe00}|F(x,y)| \le (\tau_N|x|\tau_N|y| +\vp)^{1/2} ,\end{equation}
 and the preceding reasoning leads to
 $$|F(x,y)|\le     \vp'  \la^2 +2(\la^{-1}+\vp)^{1/2}\le \vp'  \la^2  +2\la^{-1/2}+2\vp^{1/2} .$$
 Choosing $\la= (2\vp')^{-2/5}  $ to minimize, we obtain the announced upper bound
 $\vp'^{1/5} (2^{-4/5}+2^{6/5})+2\vp^{1/2} ,$ thus completing the proof of \eqref{qe-2}.
 
 The converse implication \eqref{eq6+} $\Rightarrow$ \eqref{eq6''} is obvious:
 Indeed, for any $U,V\in U(N)$ \eqref{eq6+}  implies 
 $  |\sum \tau_N(Uu_jV v_j^*)| \le n\vp'$ and hence
 since we assume $\sum  \tau_N(u^*_ju_j)=\sum  \tau_N(v^*_jv_j)=n$ we have
  $$d(U.u.V,t)^2=2n-2\Re \sum \tau_N(Uu_jV v_j^*)\ge 2n(1-\vp')$$
 and taking the infimum over $U,V\in U(N)$ we obtain  \eqref{eq6''}.
\end{proof}  
    
  \begin{proof}[Proof of Theorem \ref{goal}] 
  Our original proof was based on Hastings's Lemma \ref{has}, but
  the current proof, based instead on \eqref{a44} allows for
  more uniformity with respect to $N$. Note however that
  if we could prove a sharper form of \eqref{qe-3} (e.g. with $\vp$ in place of $2\vp$)
  then Lemma \ref{has} would allow us to cover values of $n$ as small as $n=3$,
  for all $N$ large enough (while using \eqref{a44}   requires $C' n^{-1/2}<n$).
  
   By \eqref{a44} there is a constant $C'$ such that 
  such that 
  $$\forall N\ge 1\quad \E \|\sum\nolimits_1^n   U_j \otimes \bar U_j (1-P)\| \le   C'\sqrt n. $$
  where $\E$ is with respect to the normalized Haar measure on $U(N)^n$.
By  Tchebyshev's inequality,  this implies
    $\P(S_\vp)>1/2$   for all $N\ge 1$,
  assuming only that $n> n_0(\vp)$ for a suitably adjusted value of $n_0(\vp)$ (say
  we require $\vp^{-1}  C' n^{-1/2}<1/2$). We will now apply  
    Lemmas \ref{lem5} and \ref{lem6} to the subset $A=S_\vp$.\\
   Fix $0<\vp,\d<1$.  Let $0<\vp'<1$ be such that
  $3\vp'^{1/3}=(1-\d)/2$ and let $\vp_0$ be such that
  $2\vp_0=(1-\d)/2$.
  Let $c'=\sqrt{2(1-\vp')}$ and $c=\sqrt{2(1-\vp'/2)}$
  so that we have $c'<c<\sqrt 2$.\\
  Assume $\vp\le\vp_0$.
  Let $r=\vp'^2$ and  $b=K\vp'^2/2$ and say $b'=b/2$
  so that $b-b'$ and $c-c'$ depend only on $\vp'$
  (or equivalently on $\d$).
  For $n\ge n_0(\vp,\vp')$, Lemmas \ref{lem5} and \ref{lem6}   give us a subset $T\subset S_\vp$
  such that $|T|\ge \exp K\vp'^2 nN^2/4$
  and such that $d'(s,t)\ge \sqrt{2(1-\vp')}$ for all $s\not=t$.
  Then  Lemma \ref{lem7} (specifically \eqref{qe-3}) gives us
  that $s,t$ are $\d$-separated since $\vp\le\vp_0$ and our choice
  of $\vp_0,\vp'$ is adjusted so that $3\vp'^{1/3}+2\vp_0=1-\d$.
  This completes the proof for any $\vp\le\vp_0$ and in particular
  for $\vp=\vp_0$. The remaining case $\vp_0<\vp<1$ then follows automatically
  since   $S_{\vp_0}\subset S_\vp $ if $\vp_0<\vp$. \end{proof}

One defect of Lemma  \ref{lem77} is that when $\vp'$ is close to 1, its conclusion
is void (however small $\vp$ can be). This is corrected by the next Lemma the main interest of which
is the case when $\d$  and $f_\vp(\d)$ are small.

\begin{lem}\label{lem7} Fix $0<\vp <1$. There is a positive 
  function $\delta  \mapsto f_\vp(\delta )$ defined  for $0<\d<1$
  and such that $  f_\vp(\delta )=O_\vp( \d^{1/4})$ when $\d \to 0$
  satisfying the following property:\\
 Consider $u=(u_j)\in S_\vp \subset U(N)^n$ and 
 and   $v=(v_j)\in M_N^n$  such that $\|\sum v_j \otimes \bar v_j \|\le n$.
 The condition
 \begin{equation}\label{eq6} d'(u,v)\ge f_\vp(\delta ) \sqrt n\end{equation}
 implies
 \begin{equation}\label{eq6'}\| \sum u_j \otimes \bar v_j \|\le n(1-\delta) .\end{equation}
 
\end{lem}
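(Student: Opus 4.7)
The plan is to argue by contraposition: assume $\|\sum u_j\otimes\bar v_j\|\ge n(1-\delta)$ and derive $d'(u,v)\le f_\vp(\delta)\sqrt n$ with $f_\vp(\delta)=O_\vp(\delta^{1/4})$ as $\delta\to 0$.

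\textbf{Setup and truncation.} Using \eqref{25}, I would pick $\xi,\eta\in M_N$ with $\|\xi\|_2,\|\eta\|_2\le 1$ achieving $\mathrm{Re}\sum\mathrm{tr}(u_j\xi v_j^*\eta^*)\ge n(1-\delta)$. By polar decomposition $\xi=V'|\xi|,\eta=U'|\eta|$ and absorbing the unitaries into the orbit of $u$ (so $u$ is replaced by $\hat u=U'^*uV'\in Orb(u)\cap S_\vp$), then rescaling by $\sqrt N$, I reduce to positive $x,y\ge 0$ with $\|x\|_{L_2(\tau_N)}=\|y\|_{L_2(\tau_N)}=1$ and $F(x,y):=\frac1n\sum\tau_N(xu_jyv_j^*)\ge 1-\delta$. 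Fixing $\la>1$ to be chosen, I truncate via spectral projections $p=\chi_{\{x\le\la\}},q=\chi_{\{y\le\la\}}$, writing $x=x_1+x_2,y=y_1+y_2$. Tchebyshev's inequality gives $\tau_N|x_2|,\tau_N|y_2|\le 1/\la$, and applying the one-sided bound \eqref{qe00} of Lemma \ref{lem77} (which requires only $u\in S_\vp$, not $v\in S_\vp$) yields $|F(x_2,y)|+|F(x_1,y_2)|\le 2(\la^{-1}+\vp)^{1/2}$, so $F(x_1,y_1)\ge 1-\delta-2(\la^{-1}+\vp)^{1/2}=:1-\delta'$.

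\textbf{Alignment.} The identity $\|x_1.u.y_1\|_{\cl H}^2=\tau_N(x_1^2)\tau_N(y_1^2)+\frac{1}{n}\langle T_u(y_1^2),x_1^2\rangle$, together with $\|T_u\|_{H_0\to H_0}\le\vp n$ and $\|x_1\|_\infty,\|y_1\|_\infty\le\la$, yields $\|x_1.u.y_1\|_{\cl H}^2\le 1+\vp\la^2$. Since $\|v\|_{\cl H}\le 1$, Cauchy-Schwarz in $\cl H$ gives
\[
\|x_1.u.y_1-v\|_{\cl H}^2\le 2\delta'+\vp\la^2.
\]
Thus $v$ is close in $\cl H$ to $x_1.u.y_1$; the remaining task is to approximate $x_1.u.y_1$ by an element of $Orb(u)$.

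\textbf{Rigidity (the main obstacle).} Decompose $x_1=aI+x_1'$, $y_1=bI+y_1'$ with $x_1',y_1'\in H_0$, and let $\alpha=\|x_1'\|_{L_2(\tau_N)}$, $\beta=\|y_1'\|_{L_2(\tau_N)}$, $a=\tau_N(x_1),b=\tau_N(y_1)$, $\mu=\langle u,v\rangle_{\cl H}$. Expand
\[
F(x_1,y_1)=ab\mu+aF(I,y_1')+bF(x_1',I)+F(x_1',y_1').
\]
Using the spectral gap and the computation above, $|F(I,y_1')|\le\beta$, $|F(x_1',I)|\le\alpha$, and $|F(x_1',y_1')|\le\alpha\beta+O(\la\sqrt{\vp\alpha\beta})$. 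Combined with the constraints $a^2+\alpha^2\le 1$, $b^2+\beta^2\le 1$ and $|\mu|\le 1$, I would show, after careful optimization, that $F(x_1,y_1)\ge 1-\delta'$ forces both $\alpha,\beta=O_\vp(\sqrt{\delta'})$ and $1-a,1-b=O_\vp(\sqrt{\delta'})$, so $\|x_1-I\|_{L_2}^2=(a-1)^2+\alpha^2=O_\vp(\delta')$ (and similarly for $y_1-I$). This rigidity step is the crux: a priori near-extremizers of $F$ over the $L_2(\tau_N)$-unit ball of positive matrices could be concentrated (e.g.\ of rank one, far from $I$), and only the spectral gap $u\in S_\vp$ rules this out. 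Getting the sharp exponent requires tracking the sub-leading terms carefully, not merely applying Cauchy-Schwarz as in Lemma \ref{lem77}.

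\textbf{Conclusion.} Writing $x_1u_jy_1-u_j=(x_1-I)u_jy_1+u_j(y_1-I)$, taking $L_2(\tau_N)$-norms, squaring and averaging over $j$ (using $\|u_j\|_\infty=1$ and $\|y_1\|_\infty\le\la$), I get $\|x_1.u.y_1-u\|_{\cl H}^2=O_\vp(\la^2\delta')$. Combined with the alignment step, triangle inequality gives $\|u-v\|_{\cl H}^2=O_\vp(\delta'+\vp\la^2+\la^2\delta')$, hence $d'(u,v)\le d(u,v)=\sqrt n\,\|u-v\|_{\cl H}\le O_\vp(\sqrt n)\bigl(\delta'+\vp\la^2+\la^2\delta'\bigr)^{1/2}$. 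Choosing $\la$ to balance $\la^{-1/2}$, $\sqrt{\vp}\la$ and $\la\sqrt\delta$ (with $\vp$ fixed and $\delta\to 0$, one may take $\la\sim\delta^{-1/4}$) produces $f_\vp(\delta)=O_\vp(\delta^{1/4})$, as required.
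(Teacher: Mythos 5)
Your overall strategy (contraposition, polar decomposition to reduce to positive $x,y$ with $F(x,y)\ge 1-\d$, then forcing $x,y$ close to $I$ and concluding by the triangle inequality) is the right shape, and the "alignment" computation is sound. But two steps fail. First, the truncation step imports the error terms of Lemma \ref{lem77} verbatim, and these do not vanish as $\d\to 0$: the bound \eqref{qe00} gives $|F(x_2,y)|\le(\la^{-1}+\vp)^{1/2}\ge\vp^{1/2}$, so your $\d'=\d+2(\la^{-1}+\vp)^{1/2}$ has a floor of $2\vp^{1/2}$ for every choice of $\la$. The final estimate $\|u-v\|^2_{\cl H}=O_\vp(\d'+\vp\la^2+\la^2\d')$ is then bounded below by a positive constant depending on $\vp$ but not on $\d$, so you cannot get $f_\vp(\d)\to 0$ as $\d\to 0$, let alone $f_\vp(\d)=O_\vp(\d^{1/4})$. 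This is precisely the regime in which the paper says Lemma \ref{lem77} is ``void'' and which Lemma \ref{lem7} is designed to repair; accordingly the paper's proof uses no truncation at all.

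Second, the rigidity step does not close with the bounds you state. From $|F(I,y_1')|\le\beta$, $|F(x_1',I)|\le\alpha$ and $|F(x_1',y_1')|\le\alpha\beta+O(\la\sqrt{\vp\alpha\beta})$ you only get $F(x_1,y_1)\le(a+\alpha)(b+\beta)+O(\la\sqrt{\vp})$, and $(a+\alpha)(b+\beta)$ can equal $2$ under the constraints $a^2+\alpha^2\le 1$, $b^2+\beta^2\le 1$ (take $a=\alpha=b=\beta=2^{-1/2}$); hence $F(x_1,y_1)\ge 1-\d'$ is perfectly consistent with $\alpha,\beta$ of order $1$, and no optimization of these inequalities yields $\alpha,\beta=O(\sqrt{\d'})$. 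The cross-term bounds cannot be sharpened, since the mixed operator $\sum u_j\otimes\bar v_j$ has no spectral gap ($v$ is arbitrary). The paper's proof obtains rigidity by a different mechanism, which is the missing idea: writing $\tau_N(w_j|\xi|v_j^*|\eta|)=\langle|\eta|^{1/2}w_j|\xi|^{1/2},\,|\eta|^{1/2}v_j|\xi|^{1/2}\rangle$ with $w\in Orb(u)\cap S_\vp$, Cauchy--Schwarz together with $\sum\||\eta|^{1/2}v_j|\xi|^{1/2}\|_H^2\le n$ shows that the \emph{purely quadratic} form $\langle(\sum w_j\otimes\bar w_j)|\xi|,|\eta|\rangle$ exceeds $n(1-\d)^2$. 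Since this equals $n\tau_N(|\xi|)\tau_N(|\eta|)+\langle T_we,d\rangle$ with $e=(1-P)|\xi|$, $d=(1-P)|\eta|$, and since $\tau_N(|\xi|)\tau_N(|\eta|)\le 1-\|e\|_H\|d\|_H$ while $\|T_w\|\le\vp n$, one is forced to $\|e\|_H\|d\|_H\le(1-\vp)^{-1}(2\d-\d^2)$ and then $\|e\|_H^2+\|d\|_H^2=O_\vp(\d)$, with no truncation and no additive $\vp$ error. Once $|\xi|,|\eta|$ are known to be within $O_\vp(\sqrt\d)$ of $I$, your concluding step goes through and the exponent $1/4$ emerges from the final square root.
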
 \begin{proof} Assume by contradiction that $\| \sum u_j \otimes \bar v_j \|> n(1-\delta) $.
Then there are $\xi,\eta$ in the unit sphere of $H=L_2(\tau_N)$ such that
$$ \Re \tau_N(\sum u_j \xi v_j^* \eta^*)>n (1-\delta) .$$
 Let $\xi=U| \xi|$ and $\eta =V|\eta|$ be their polar decompositions, and let $w_j=V^* u_j U$ so that we can write
  \begin{equation}\label{eq7}\Re \tau_N(\sum w_j |\xi| v_j^* |\eta|)>n (1-\delta) .\end{equation}
 Recall $T_u(\xi)= \sum u_j\xi u_j^*.$ Note that since $U\otimes \bar U$ and  $V\otimes \bar V$ preserve $I$ (and hence $I^\perp$), we have $\|T_w\|=\|T_u\|$. Therefore $w\in S_\vp$. Using the scalar product in $H$  we have
 $$ \Re  \sum \langle |\eta|^{1/2} w_j |\xi|^{1/2},  |\eta|^{1/2} v_j |\xi|^{1/2}   \rangle>n (1-\delta) ,$$
 and by Cauchy-Schwarz 
 $$( \sum  \| |\eta|^{1/2} w_j |\xi|^{1/2}\|_H^2)^{1/2}\ ( \sum  \| |\eta|^{1/2} v_j |\xi|^{1/2}\|_H^2)^{1/2}>n (1-\delta) .$$
 Note that since $\|\sum v_j \otimes \bar v_j \|\le n$ we have
 $\langle (\sum v_j \otimes \bar v_j)|\xi|,|\eta| \rangle 
 =  \sum  \| |\eta|^{1/2} v_j |\xi|^{1/2}\|_H^2  \le   n$,
  and similarly with $w_j$ in place of $v_j$.
  Thus the last inequality implies a fortiori
  \begin{equation}\label{eq8}\langle (\sum w_j \otimes \bar w_j)|\xi|,|\eta|\rangle >n (1-\delta)^2 ,
  \end{equation}
 and the same with $v_j$ in place of $w_j$.\\
  Let $e=(1-P)|\xi|$ and $d=(1-P)|\eta|$. Recall that $P(|\xi|)=\tau_N(|\xi|) I$
  and $P(|\eta|)=\tau_N(|\eta|) I$, and $|\xi|,|\eta|$ are unit vectors, so that $\tau_N(|\xi|)=(1-\|e\|_H^2)^{1/2}$
  and $\tau_N(|\eta|)=(1-\|d\|_H^2)^{1/2}$. 
  \def\o{\omega}
  Let $\o=\|e\|_H \|d\|_H$. By Cauchy-Schwarz we have $\o+(1-\|e\|_H^2)^{1/2}(1-\|d\|_H^2)^{1/2}\le 1$ and hence $(1-\|e\|_H^2)^{1/2}(1-\|d\|_H^2)^{1/2}\le 1-\o$.
 Since $w\in S_\vp$, we have
  $$\langle (\sum w_j \otimes \bar w_j)|\xi|,|\eta|\rangle=\langle T_w e, d\rangle + n \tau_N(|\xi|) \tau_N(|\eta|) \le  \vp n \o +n (1-\o).  $$
 Thus, \eqref{eq8} yields
  \begin{equation}\label{qerev2}\o\le (1-\vp)^{-1} (2\delta-\d^2).\end{equation}
 Moreover, \eqref{eq8}  implies
\begin{equation}\label{qerev4}  n^{-1}  \sum \| w_j |\xi|w_j ^*  - |\eta|  \|_H^2= 
 2 -2n^{-1}  \langle (\sum w_j \otimes \bar w_j)|\xi|,|\eta|\rangle
 <2(2\d-\d^2) .\end{equation}
 But since $(I-P)(w_j |\xi|w_j ^*- |\eta|)=w_jew_j ^*-d$ for each $j$,  we have 
 $$|\| e\|_H-\|d \|_H| =|\| w_jew_j ^*\|_H-\|d \|_H |\le \| w_jew_j ^*-d \|_H\le \| w_j |\xi|w_j ^*  - |\eta|  \|_H$$
so  that \eqref{qerev4}  implies
  $(\| e\|_H-\|d \|_H)^2< 2(2\d-\d^2)$. Therefore  
  \begin{equation}\label{qerev1}\| e\|_H^2+ \| d\|_H^2< 2\o + 2(2\d-\d^2)\le 2(2\d-\d^2)((1-\vp)^{-1}+1).\end{equation}
  We can write
   $$     w_j |\xi| v_j^* |\eta|=
     w_j P(|\xi|) v_j^* P(|\eta|)+
       w_j e v_j^* P(|\eta|)
      +    w_j P(|\xi|) v_j^* d
      +   w_j e v_j^* d$$
      and we find
      $$   n(1-\d)< \Re\tau_N(\sum w_j |\xi| v_j^* |\eta|)$$
      $$\le    \Re\tau_N(\sum w_j  v_j^*)\tau_N(|\xi|)\tau_N(|\eta|)
      + n\|e\|_H \tau_N(|\eta|) +
      n\tau_N(|\xi|) \|d\|_H +n \|e\|_H \|d\|_H
       $$
       and a fortiori 
      $$n(1-\d)<   \Re\tau_N(\sum w_j  v_j^*) \tau_N(|\xi|)\tau_N(|\eta|)+n\|e\|_H +n\|d\|_H+n\|e\|_H\|d\|_H .$$
      Note that $ n\|e\|_H +n\|d\|_H+n\|e\|_H\|d\|_H\le n2^{1/2} (\|e\|_H^2 +\|d\|_H^2)^{1/2}+n\|e\|_H\|d\|_H$.
      By \eqref{qerev1} and \eqref{qerev2}, we obtain
      $$  \tau_N(|\xi|)\tau_N(|\eta|) \Re\tau_N(\sum w_j  v_j^*) > n(1- \theta)$$
      with $$\theta=  \d+ 2^{1/2} \left(2(2\d-\d^2)((1-\vp)^{-1}+1)\right)^{1/2} + (1-\vp)^{-1} (2\delta-\d^2).$$
      Note that $\theta$ is $O(\d^{1/4})$ when $\d\to 0$ and  there is clearly some $\d_\vp>0$ such that $0<\theta<1$ for all $\d\le \d_\vp$. Thus,
       assuming   $0<\d\le \d_\vp$ we have $1-\theta>0$  and  we find
      $$\Re\tau_N(\sum w_j  v_j^*)  > n(1- \theta).$$
      This is  a lower bound for  the real part of the scalar product in $\ell_2^n(H)$ of  $w=(w_j )$ and  $v=(v_j )$ 
 which are both in the   ball of radius $\sqrt n$. Therefore we deduce from this
 $$ d(w,v)^2\le 2n-2\Re\tau_N(\sum w_j  v_j^*)<2n \theta.$$
 If we now set 
  $f_\vp(\delta)=(2 \theta)^{1/2}$  for all $\d\le \d_\vp$, and  $f_\vp(\delta)=3$ (say) for  $\d_\vp<\d< 1$, we have
  in any case $ d(w,v)<f_\vp(\delta) \sqrt{n}$.\\ Thus we have proved that  
 $\| \sum u_j \otimes \bar v_j \|> n(1-\delta) $ implies
  $d'(u,v)<f_\vp(\delta)\sqrt{n}$. 
   This is equivalent to the fact that \eqref{eq6}  implies
  \eqref{eq6'}, and moreover for each $0<\vp<1$
  there is a constant $c_\vp>0$ such that
  for any $0<\d<1$ we have  $f_\vp(\delta) \le c_\vp \d^{1/4}$.
 \end{proof}

       \begin{rem}\label{pet} 
       Let   $f_\vp(\d)$ be any function such that \eqref{eq6}  $\Rightarrow$  \eqref{eq6'}.
       Then, Lemma \ref{lem7} has the following consequence: Assume $u=(u_j)\in S_\vp$. Then for any $v=(v_j)\in U(k)^n$ with $k \le  (1-f_\vp(\delta)^2)N$
 we have
 $$\|\sum u_j \otimes \bar v_j  \| \le  n(1-\delta).$$
 Indeed, if  $\|\sum u_j \otimes \bar v_j\|> n(1-\delta)$, 
 then we set $v'_j=   v_j\oplus 0 \in M_N$
 so that  $\|\sum v'_j \otimes \bar v'_j\|  \le n $, and also
 $\|\sum u_j \otimes \bar v'_j\|> n(1-\delta) $. By Lemma \ref{lem7}, 
 it follows that $d'(u,v')<f_\vp(\delta) \sqrt n$. But since $|\langle u'_j, v'_j\rangle | \le k/N$
 for any $u'_j\in U(N)$, we have $d(u',v')^2=n+n(k/N) -2\sum \langle u'_j, v'_j\rangle  \ge  n(1-k/N)$,
 and hence $d'(u,v')\ge \sqrt {n}(1-k/N)^{1/2}$, which leads to $(1-k/N)^{1/2} <f_\vp(\delta) $. This contradiction concludes the proof.
\end{rem}
\section{Application to Operator Spaces}

We start with a specific notation. 
Let $u:\ E \to F$ be a linear map between operator spaces.
We denote for any  given  $N\ge 1$
$$u_N =Id\otimes u :\ M_N(E)\to M_N(F).$$
Moreover, if $E,F$ are two operator spaces that are isomorphic as Banach spaces, we set
$$d_N(E,F)=\inf \{ \|u_N\| \|(u^{-1})_N\| \}$$
where the inf runs over all the isomorphisms $u:\ E \to F$.
We set $d_N(E,F)=\infty$ if $E,F$ are not isomorphic.\\
Recall that $$\|u\|_{cb}=\sup\nolimits_{N\ge 1} \|u_N\|.$$
Recall also that, if $E,F$ are completely isomorphic, we set
$$d_{cb}(E,F)=\inf \{ \|u\|_{cb} \|u^{-1}\|_{cb} \}$$
where the inf runs over all the complete isomorphisms $u:\ E \to F$.\\
When $E,F$ are both $n$-dimensional, a compactness argument shows that
$$d_{cb}(E,F)=\sup\nolimits_{N\ge 1} d_N(E,F).$$

We will apply the preceding to $M_N$-spaces. When $N=1$, the latter coincide with the usual Banach spaces. When $N>1$, roughly the complex scalars are replaced by $M_N$.

Let $(A_i)_{i\in I}$ be a family of von Neumann or $C^*$-algebras.  Let
$Y=\oplus_{i\in I} A_i$ denote their direct sum. This can be described as the algebra of bounded families $(a_i)_{i\in I}$ with $a_i\in A_i$ for all $i\in I$,  equipped with the norm $\|a\|=  \sup\nolimits_{i\in I} \|a_i\|$.
We will concentrate on the case when $A_i=M_N$ for all $i\in I$.  In that case, following the Banach space tradition,  we denote 
the space $Y=\oplus_{i\in I} A_i$ by $\ell_\infty(I;M_N)$.

\begin{dfn} An operator space $X$ is called an $M_N$-space if, for some set $I$,  it can be embedded
completely isometrically in  $\ell_\infty(I;M_N)$.
\end{dfn}

Our main interest will be to try to understand for which spaces the cardinality of $I$ is unusually small.

To place things in perspective, we recall that for any (complex) Banach space $X$
there is an isometric embedding  $J:\ X \to \ell_\infty(I;\CC)$ defined by
$(J x)(\phi)=\phi(x)$. Here $I$ is the unit ball, denoted by $B_{X^*}$, of the space $X^*$.\\

In analogy with this, for any $M_N$-space there is a canonical completely isometric embedding  $\hat J:\ X \to \ell_\infty(\hat I;M_N)$ defined again by $(J x)(\phi)=\phi(x)$, but with $\hat I=B_{CB(X,M_N)}$ in place of $B_{X^*}$. The space $\ell_\infty(\hat I;M_N)$ can alternatively be described 
as $\oplus_{i\in \hat I} Z_i$ with $Z_i=M_N$ for all $i\in \hat I$. 

Just like operator spaces, $M_N$-spaces enjoy a nice duality theory (see \cite{Leh,OR} for more information). Indeed, by Roger Smith's lemma, we have $\|u\|_{cb}=  \|u_{N}\|$ for
any $u$ with values in an $M_N$-space  (see e.g. \cite[p. 26]{P4}), and
 $M_N$-spaces are characterized among operator spaces
by this property.
The following reformulation of  Smith's Lemma  is  useful.
\begin{lem}\label{rs} Fix an integer $N\ge 1$. Let $E\subset B(H)$ be a finite dimensional operator space and let $c\ge 1$ be a constant. The following properties are equivalent.\\
{\rm (i)} For any operator space $F$ and any $u:\ F\to E$ we have $\|u\|_{cb}\le c\|u_{N}\|$.\\
{\rm (ii)} There is an  $M_N$-space such that $d_{cb}(E,\hat E)\le c$.\\
{\rm (iii)} Let ${\cl C}$ be the class of all (compression) mappings $v:\ E \to B(H',H'')$ 
of the form $x\mapsto P_{H'' }x_{|H'}$ where $H',H''$ are arbitrary  subspaces of $H$ of dimension at most $N$.
Let $\hat J: E \to \oplus_{v\in \cl C} Z_v$ with $Z_v=B(H',H'')$ be defined by $\hat J(x)=\oplus_{v\in \cl C}   v(x)$, and let $\hat E= \hat J(E)$. Then $d_{cb}(E,\hat E)\le c$.
\end{lem}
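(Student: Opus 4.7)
The plan is a cyclic proof $(iii)\Rightarrow(ii)\Rightarrow(i)\Rightarrow(iii)$. The first implication is essentially immediate: the codomain $\bigoplus_{v\in\cl C} B(H',H'')$ of $\hat J$ is automatically an $M_N$-space, since every summand $B(H',H'')$ with $\dim H',\dim H''\le N$ embeds completely isometrically into $M_N$, and hence so does $\hat E$. For $(ii)\Rightarrow(i)$, I would take a realization $\phi:E\to \hat E$ of $d_{cb}(E,\hat E)\le c$ with $\hat E$ an $M_N$-space and, for any $u:F\to E$, apply Smith's Lemma to the composition $\phi\circ u:F\to \hat E$: this gives $\|\phi\circ u\|_{cb}=\|(\phi\circ u)_N\|\le \|\phi\|_{cb}\|u_N\|$, and factoring out $\phi^{-1}$ yields $\|u\|_{cb}\le \|\phi^{-1}\|_{cb}\|\phi\|_{cb}\|u_N\|\le c\|u_N\|$.

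The heart of the lemma is $(i)\Rightarrow(iii)$, which I would prove by analyzing the canonical map $\hat J:E\to\hat E$ directly. Since each compression $v\in\cl C$ is completely contractive, $\|\hat J\|_{cb}\le 1$ for free. The crucial step is to show that $\hat J_N$ is already an \emph{isometry} on $M_N(E)$. For $x\in M_N(E)\subset M_N(B(H))$, I would unpack $\|x\|_{M_N(B(H))}=\sup|\langle x\xi,\eta\rangle|$ with $\xi,\eta$ ranging over unit vectors in $H\otimes\ell_2^N$. Writing $\xi=\sum_{i=1}^N \xi_i\otimes e_i$, the components $\xi_1,\dots,\xi_N$ span a subspace $H'\subset H$ of dimension at most $N$; likewise $\eta$ lies in $H''\otimes\ell_2^N$ for some $H''\subset H$ with $\dim H''\le N$. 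The matrix coefficient $\langle x\xi,\eta\rangle$ then coincides with $\langle v_N(x)\xi,\eta\rangle$ for the corresponding compression $v:E\to B(H',H'')$, so taking the sup yields $\|x\|_{M_N(E)}=\sup_{v\in\cl C}\|v_N(x)\|=\|\hat J_N(x)\|$; in particular $\|(\hat J^{-1})_N\|\le 1$.

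To finish, I would apply hypothesis (i) to the map $u=\hat J^{-1}:\hat E\to E$, which is legal because $\hat E$ is an operator space as a subspace of the $\ell_\infty$-direct sum: this gives $\|\hat J^{-1}\|_{cb}\le c\,\|(\hat J^{-1})_N\|\le c$, and together with $\|\hat J\|_{cb}\le 1$ this delivers $d_{cb}(E,\hat E)\le c$, i.e.\ (iii). The only genuinely substantive step is the compression-based identification $\|x\|_{M_N(E)}=\sup_v\|v_N(x)\|$; the subtlety to watch is that a unit vector in $H\otimes\ell_2^N$ has ``support'' of dimension at most $N$ (not $N^2$), so the compressions that appear land in $\cl C$ with codomain dimensions $\le N$, which is exactly what ties the equivalence to $M_N$-spaces rather than $M_{N^2}$-spaces.
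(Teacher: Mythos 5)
Your proof is correct and follows essentially the same route as the paper: (iii)$\Rightarrow$(ii) trivially, (ii)$\Rightarrow$(i) via Smith's Lemma, and (i)$\Rightarrow$(iii) by applying the hypothesis to $\hat J^{-1}$. The only difference is that you spell out the compression argument showing $\hat J_N$ is isometric (which the paper dismisses as ``a simple verification''), and your version of it — reducing a unit vector of $H\otimes\ell_2^N$ to its at most $N$-dimensional component span — is exactly right.
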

\begin{proof}  {\rm (ii)} $\Rightarrow$ {\rm (i)}  follows from Roger Smith's lemma and  {\rm (iii)} $\Rightarrow$ {\rm (ii)} is trivial.
Conversely, if  {\rm (i)} holds, let $\hat E$ be the $M_N$-space obtained using the embedding
$\hat J:\ E \to \oplus_{v\in \cl C} Z_v$ appearing in {\rm (iii)}. Obviously
$\|E \to \hat E\|_{cb}\le 1$.
Let us denote by $u:\  \hat E \to   E$ the inverse mapping. 
A simple verification shows that $\|u_N\|=1$ and hence {\rm (i)}  implies $\|u\|_{cb}\le c$. In other words
{\rm (i)} $\Rightarrow$ {\rm (iii)}. 
\end{proof} 
Therefore, when $X$ is an $M_N$-space, the knowledge of the space $M_N(X)$ determines that of $M_n(X)$ for all $n>N$, and hence the whole operator space structure of $X$.

Given a general operator space $X\subset B(H)$, by restricting   to $M_N(X)$
(and ``forgetting" $M_n(X)$ for  $n>N$),
we obtain an $M_N$-space $M_N$-isometric to $X$. We will
say that the latter $M_N$-space is induced by $X$.\\ 
Conversely,
given an $M_N$-space $X$ there is a minimal and a maximal operator space structure  on $X$
inducing the same $M_N$-space. When $N=1$, we recover the Blecher-Paulsen
theory of minimal and maximal operator spaces associated
to Banach spaces, see  \cite{Leh,OR} for more on this.

Let $E$ be a finite dimensional operator space. For each integer $N$, 
let $E[N]$ denote the induced $M_N$-space.
Then it is easy to check that $E$ can be identified (completely isometrically) with the ultraproduct of $\{  E[N]   \}$ relative
to any free ultraproduct on $\NN$. Thus the operator space structure  of $E$ can be encoded
by the sequence of $M_N$-spaces $\{  E[N] \mid N\ge 1  \}$. Note that  $E[N]$ is induced by 
$E[N+1]$ for any $N$, so that one could picture the set of $n$-dimensional  operator spaces
as infinite branches of trees where the $N$-th node consists of an $M_N$-space,
and any node is induced by any successor.

We can associate to each $M_N$-space a dual one $ X^\dagger$, isometric to the 
operator space dual $X^*$, but 
defined by
$$\forall n\in \NN\quad \forall y\in M_n(X^\dagger) \quad \|y\|_{M_n(X^\dagger)}=\sup\nolimits_{f \in M_N(X)} \|(I\otimes f)(y) \|_{M_n(M_N)},$$
where we view $M_N(X)$ as a subset of   $CB(X^*,M_N)$  in the usual way.
In other words we have a completely isometric embedding $J_\dagger :\ X^\dagger \to \ell_\infty(I;M_N)$
defined by $$J_\dagger (z)=\oplus_{f \in M_N(X)} f(z)=\oplus_{f \in M_N(X)} [f_{ij}(z)].$$

Just like for operator spaces, there is a notion of ``Hilbert space" for $M_N$-spaces.
We will denote it by $OH(n,N)$. The latter can be defined as follows.
First we have an analogue of the Cauchy-Schwarz inequality due to Haagerup,
as follows: $\forall x=(x_j) \in M_N^n, \forall y=(y_j)\in M_N^n$
\begin{equation}\label{eqcsh}
\| \sum x_j\otimes \bar y_j\|\le \|\sum  x_j\otimes \bar x_j\|^{1/2}    \|\sum  y_j\otimes \bar y_j\|^{1/2} .
\end{equation}
 Fix $N$. 
Let $S(n,N)$ (resp. $B(n,N)$) denote the set of $n$-tuples $x=(x_j)$ in $M_N$ such that
$\|\sum x_j \otimes \bar {x_j}\|= 1$ (resp. $\|\sum x_j \otimes \bar {x_j}\|\le 1$).
Then $S(n,N)$ (resp. $B(n,N)$)  is the analogue of the unit sphere (resp. ball)
in the $M_N$-space  $OH(n,N)$. The space $X=OH(n,N)$ is 
isometric to $\ell_2^n$, with its orthonormal basis $(e_j)$,  and embedded into
$\ell_\infty(I;M_N)$ with $I=B(n,N)$ (we could also take $I=S(n,N)$).
 The embedding
$J_{oh}: OH(n,N)\to   \ell_\infty(I;M_N) $ 
is defined by
$$ \forall j=1,\cdots,n\quad J_{oh} (e_j)= \oplus_{x\in B(n,N)} x_j.$$

 The latter is the analogue of $n$-dimensional Hilbert space
among $M_N$-spaces, and indeed when $N=1$ we recover the $n$-dimensional Hilbert space.

\begin{dfn}
Let $E$ be an operator space with basis $(e_j)$. Let $\xi_j$ be the biorthogonal basis of $E^*$.
Let $x=\sum x_j\otimes e_j\in M_N(E)$ and $y=\sum y_j\otimes \xi_j \in M_N(E^*)$. Assuming $x\not=0$ and $y\not=0$, we say that $y$ $M_N$-norms $x$ (with respect to $M_N(E)$)
  if 
$$\| \sum x_j\otimes y_j\|= \|x\|_{M_N(E)}  \|y\|_{M_N(E^*)}.$$
In the particular case when $E=OH_n$, we slightly modify this (since $E^*= \bar E$): Given $x,y\in M_N(OH_n)$, we say that $y$ $M_N$-norms $x$ if
$$\| \sum x_j\otimes \bar y_j\|= \|\sum  x_j\otimes \bar x_j\|^{1/2}    \|\sum  y_j\otimes \bar y_j\|^{1/2} .$$
\end{dfn}

Let $x \in M_N(E)$. For $a,b\in  M_N$ we denote by $axb$ the matrix product
(i.e. $(a\otimes 1) x(b\otimes 1)$ in tensor product notation using $M_N(E)=M_N\otimes E$).
We denote $$Orb(x)=\{ uxv \in  M_N(E)\mid u,v\in U(N)  \}.$$

Note that if  $y\in M_N(E^*)$ $M_N$-norms    $x$   then the same is true for any $y'\in Orb(y)\subset M_N(E^*)$. Actually, any   $y'\in Orb(y)$ $M_N$-norms any $x'\in Orb(x)$.

\begin{dfn} We say that  $x \in M_N(E)$ is an $M_N$-smooth point of $M_N(E)$  
if the set of points $y$ in the unit sphere of $ M_N(E^*)$ that $M_N$-norm $x$  
is reduced to a single orbit.
\end{dfn}

The following simple Proposition explains the direction we will be taking next.

\begin{pro} Let $x,y\in M_N(OH_n)$. Assume  $$x=(x_j)\in U(N)^n  \ {\rm and} \   
\|T_x:\ H_0\to H_0\|<n,$$ where $T_x=\sum x_j\otimes \bar x_j(1-P)$ (i.e. $T_x$ has a spectral gap at $n$).\\
Then $y$ norms $x$ with respect to $M_N(OH_n)$ iff
$y$ is a multiple of an element of $Orb(x)$, i.e. iff there are $\lambda >0$
and $u,v\in U(N)$ such that $y_j=\lambda vx_ju$ for all $1\le j\le n$.
\end{pro}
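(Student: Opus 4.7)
The plan is to deduce the proposition from Lemma \ref{lem7}. The reverse implication is a direct computation, while the forward implication follows from the contrapositive of that lemma once we notice that the spectral gap hypothesis on $T_x$ is exactly the statement $x\in S_\vp$ for some $\vp\in(0,1)$.

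For $(\Leftarrow)$, suppose $y_j=\lambda v x_j u$ with $u,v\in U(N)$ and $\lambda>0$. Using 2-sided unitary invariance of the operator norm on $\ell_2^N\otimes\overline{\ell_2^N}$, one writes $\sum y_j\otimes\bar y_j=\lambda^2(v\otimes\bar v)(\sum x_j\otimes\bar x_j)(u\otimes\bar u)$, which has norm $\lambda^2 n$, and similarly $\sum x_j\otimes\bar y_j=\lambda(I\otimes\bar v)(\sum x_j\otimes\bar x_j)(I\otimes\bar u)$, which has norm $\lambda n$. Hence
\[
\|\sum x_j\otimes\bar y_j\|=\lambda n=\sqrt{n}\cdot\lambda\sqrt{n}=\|\sum x_j\otimes\bar x_j\|^{1/2}\|\sum y_j\otimes\bar y_j\|^{1/2},
\]
so $y$ $M_N$-norms $x$.

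For $(\Rightarrow)$, assume $y\neq 0$ $M_N$-norms $x$. Both sides of the norming equality are homogeneous of degree one in $y$, so after rescaling $y$ by a positive constant we may assume $\|\sum y_j\otimes\bar y_j\|=n$; the norming identity then reads $\|\sum x_j\otimes\bar y_j\|=n$, hence $\|\sum x_j\otimes\bar y_j\|>n(1-\delta)$ for every $\delta>0$. The hypothesis $\|T_x:H_0\to H_0\|<n$ means $x\in S_\vp$ for $\vp:=n^{-1}\|T_x|_{H_0}\|<1$. Lemma \ref{lem7} applies with $u=x$ and $v=y$, since the required bound $\|\sum y_j\otimes\bar y_j\|\le n$ holds with equality, and its contrapositive asserts that $\|\sum x_j\otimes\bar y_j\|>n(1-\delta)$ forces $d'(x,y)<f_\vp(\delta)\sqrt{n}$. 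Letting $\delta\to 0^+$ and using $f_\vp(\delta)\to 0$, we conclude $d'(x,y)=0$.

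Finally, by compactness of $U(N)\times U(N)$ together with continuity of $(u,v)\mapsto d(uxv,y)$, the infimum defining $d'(x,y)$ is attained, so there exist $u,v\in U(N)$ with $uxv=y$, i.e.\ $y_j=ux_jv$ for all $j$. Undoing the initial rescaling yields $y_j=\lambda u x_j v$ with $\lambda>0$, as required. All the rigidity is packaged inside Lemma \ref{lem7}; the only conceptual step beyond it is the translation of the spectral gap hypothesis into $S_\vp$ membership, so I do not expect a serious obstacle here. The deeper content, namely that equality in the Haagerup--OH Cauchy--Schwarz inequality \eqref{eqcsh} combined with a spectral gap forces rigid unitary equivalence, has already been carried out in the proof of Lemma \ref{lem7}.
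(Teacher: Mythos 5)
Your argument is correct, but it takes a genuinely different route from the paper. The paper proves the forward implication directly: it picks norming vectors $\xi,\eta$ in the unit sphere of $H$, rewrites the equality $\sum\tau_N(x_j\xi y_j^*\eta^*)=n$ as an equality in the Cauchy--Schwarz inequality for the (smooth) Hilbert space $\ell_2^n(H)$, concludes that $|\eta|^{1/2}x'_j|\xi|^{1/2}=|\eta|^{1/2}y_j|\xi|^{1/2}$ for $x'_j=v^*x_ju$, and then uses the spectral gap only to force $|\xi|=|\eta|=I$, whence $y=x'$ exactly. You instead invoke the quantitative Lemma \ref{lem7} in contrapositive form for every $\delta>0$ and pass to the limit $\delta\to 0$ using $f_\vp(\delta)\to 0$, then use compactness of $U(N)\times U(N)$ to see that $d'(x,y)=0$ forces $y\in Orb(x)$. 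Both are valid and non-circular (Lemma \ref{lem7} is established in \S 1 independently of the Proposition), and the paper itself remarks right after the Corollary that Lemma \ref{lem7} ``can be viewed as a refinement of this,'' so your deduction is the formalization of that remark. What the paper's direct proof buys is self-containedness and transparency: the rigidity visibly comes from smoothness of Hilbert space plus the spectral gap, with no need for the delicate spectral-truncation estimates in the proof of Lemma \ref{lem7}. What your proof buys is brevity given the lemma, at the cost of importing much heavier machinery for a qualitative statement. Two small points to tidy: (i) if $\|T_x|_{H_0}\|=0$ your choice $\vp=n^{-1}\|T_x|_{H_0}\|$ falls outside the range $0<\vp<1$ required by Lemma \ref{lem7}; since $S_{\vp}\subset S_{\vp'}$ for $\vp\le\vp'$, just pick any $\vp$ with $n^{-1}\|T_x|_{H_0}\|\le\vp<1$; (ii) the order of the unitaries in your conclusion ($y_j=\lambda u x_j v$ versus $y_j=\lambda v x_j u$) is immaterial but should match the statement.
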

\begin{proof} Recall that  whenever the $x_j$'s are finite dimensional unitaries we have 
$\|\sum  x_j\otimes \bar x_j\|^{1/2}  =\sqrt n$.
 Assume $y$ is a multiple of an element of $Orb(x)$,
i.e. $y_j=\lambda vx_ju$ for some non zero scalar $\lambda$ (that may as well be taken positive if we wish).
Then $\| \sum x_j\otimes y_j\|=|\lambda| n$,  $\|x\|_{M_N(OH_n)}=\sqrt n$  and $\|y\|_{M_N(OH_n)}=|\lambda|\sqrt n$, so
indeed $y$ norms $x$.\\
Conversely, assume that $y$ norms $x$. Multiplying $y$ by a scalar we may assume that
$\|y\|_{M_N(OH_n)}= \sqrt n$, and $\| \sum x_j\otimes \bar y_j\|= \|\sum  x_j\otimes \bar x_j\|^{1/2}\sqrt n=n$.
Let $\xi,\eta$ in the unit sphere of $H=L_2(\tau_n)$ such that 
$$\sum \tau_N( x_j \xi y_j^* \eta^*) =n.$$
Let $\xi=u| \xi|$ and $\eta =v|\eta|$ be the polar decompositions, and let $x'_j=v^* x_j u$.
Using the trace property, this can be rewritten  using the scalar product in $H$ as:
$$\sum\langle   (|\eta|^{1/2} x'_j| \xi|^{1/2}) ,   (|\eta|^{1/2} y_j| \xi|^{1/2}) \rangle =n,$$ 
and hence since $n^{-1/2}(|\eta|^{1/2} x'_j| \xi|^{1/2}) ,   n^{-1/2}(|\eta|^{1/2} y_j| \xi|^{1/2})$ are both in the unit ball
of the (smooth!) Hilbert space  $\ell_2^n(H)$, they must coincide. Moreover they both must be on the unit sphere.
Therefore $\sum \| |\eta|^{1/2} x'_j| \xi|^{1/2}\|^2_H=n$. Equivalently 
$\sum \tau_N( x'_j |\xi| {x'_j}^* |\eta|) =n.$ But we have obviously $\|T_{x'}:\ H_0\to H_0\|=\|T_{x}:\ H_0\to H_0\|<n$.
Therefore $|\xi|$ and $  |\eta|$ must be multiples of $I$, so that by our normalization
we have $|\xi|= |\eta|=I$, and we conclude that $y=x'$.
\end{proof}

In other words, the preceding Proposition shows that quantum expanders
constitute $M_N$-smooth points of $M_N(OH_n)$:

\begin{cor} 
  Assume  $x=(x_j)\in U(N)^n$. Then     
$x=\sum x_j\otimes e_j$ is an $M_N$-smooth point in $M_N(OH_n)$
 iff $\|T_x:\ H_0\to H_0\|<n$.
 \end{cor}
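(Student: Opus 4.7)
The two directions call for different arguments. The forward implication is essentially the Proposition just proved: if $\|T_x:H_0\to H_0\|<n$ and $y$ is a unit-norm element of $M_N(\overline{OH_n})$ that $M_N$-norms $x$, then by the Proposition $y_j=\lambda v x_j u$ for some $\lambda>0$ and $u,v\in U(N)$; matching norms forces $\lambda=1/\sqrt{n}$, so $y$ lies in the single orbit $Orb(x/\sqrt n)$, which is exactly the definition of $M_N$-smoothness.

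For the converse, assume $\|T_x|_{H_0}\|=n$; the plan is to produce a unit-norm $M_N$-norming element outside $Orb(x/\sqrt n)$, realized as a compression of $x$. Since every $x_j$ is unitary, $T_x$ preserves the real subspace $H_0^{\mathrm{sa}}\subset H_0$ of self-adjoint trace-zero matrices, and splitting complex vectors into their self-adjoint and skew-adjoint parts yields $\|T_x|_{H_0}\|=\|T_x|_{H_0^{\mathrm{sa}}}\|$. Applying singular value decomposition to this finite-dimensional real operator produces self-adjoint trace-zero unit vectors $\xi_0,\eta_0$ in $H$ with $T_x(\xi_0)=n\eta_0$.

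I would then inflate these to positive operators. Let $c>0$ be the largest value for which both $|\xi|:=I+c\xi_0$ and $|\eta|:=I+c\eta_0$ are positive; such a $c$ is finite and strictly positive because $\xi_0,\eta_0$ are nonzero trace-zero self-adjoint and therefore have strictly negative eigenvalues. With this choice, at least one of $|\xi|,|\eta|$ is singular (the one whose constraint is binding); say $|\xi|$ is, with support projection $P\ne I$ (the other case is symmetric, using $y_j=Qx_j$ with $Q$ the support of $|\eta|$). Both $|\xi|$ and $|\eta|$ have common $H$-norm $\sqrt{1+c^2}$ and satisfy the ``inflated Perron identity'' $T_x(|\xi|)=n|\eta|$.

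Finally, set $y_j=x_jP$. Using $|\xi|P=|\xi|$ one gets $T_y(|\xi|)=T_x(P|\xi|P)=T_x(|\xi|)=n|\eta|$ and $\sum_j x_j|\xi|y_j^\ast=T_x(|\xi|)=n|\eta|$; combined with the upper bounds $\|\sum y_j\otimes\bar y_j\|\le n$ (immediate from $T_y(\xi)=T_x(P\xi P)$) and $\|\sum x_j\otimes\bar y_j\|\le n$ (from \eqref{eqcsh}), evaluating at the unit vectors $|\xi|/\sqrt{1+c^2}$ and $|\eta|/\sqrt{1+c^2}$ promotes both to equalities. Thus $\|\sum y_j\otimes\bar y_j\|=\|\sum x_j\otimes\bar y_j\|=n$ and, after rescaling $y$ by $1/\sqrt n$, we obtain a unit-norm element $M_N$-norming $x$. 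To check $y\notin Orb(x)$: any representative $vx_ju$ of $Orb(x)$ satisfies $\sum_j(vx_ju)(vx_ju)^\ast=nI$ and hence has trace $nN$, whereas $\sum_j y_jy_j^\ast=\sum_j x_jPx_j^\ast$ has trace $n\,\mathrm{rank}(P)<nN$. The main conceptual step, and the reason the spectral-gap hypothesis is indispensable, is producing the rank-deficient pair $(|\xi|,|\eta|)$; once it is in hand the remaining verifications are mechanical.
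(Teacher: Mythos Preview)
Your proof is correct. The ``if'' direction is identical to the paper's (it is just the preceding Proposition). For the converse, however, you take a genuinely different route from the paper.

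The paper argues as follows: assuming (after passing to $Orb(x)$) that $x_1=I$, the equality $\|T_x(\xi)\|=n\|\xi\|$ together with the strict convexity of the Hilbert norm forces all summands $x_j\xi x_j^\ast$ to coincide with $\xi$, so $\xi$ lies in the commutant of $\{x_j\}$. This commutant is then non-scalar, the $x_j$ simultaneously block-diagonalize as $x_j^1\oplus x_j^2$, and one takes $y_j=x_j^1\oplus 0$. Your approach bypasses the commutant argument entirely: you shift a self-adjoint extremal vector $\xi_0$ to a positive singular $|\xi|$, take $P$ to be its support projection, and verify by direct computation that $y_j=x_jP$ works. Both constructions in fact produce a $y$ of the form $x_jP$ for a proper projection $P$; the difference is that the paper's $P$ is automatically in the commutant (a spectral projection of $\xi$), while yours need not be, which is why you must check the norming identities by hand rather than reading them off from a block decomposition. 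The paper's argument is shorter and more structural; yours is more self-contained and avoids invoking the reducibility of the representation, at the cost of the extra verifications at the end.
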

\begin{proof} The ``if part" follows from the preceding statement.
Conversely, we claim that if $\|T_x:\ H_0\to H_0\|=n$ then
  $x$ is not an $M_N$-smooth point in $M_N(OH_n)$. 
  Since this claim is unchanged if we replace $x$ by any $x'$ in $Orb(x)$, we may
  assume that $x_1=1$.
Then if
$\|T_x:\ H_0\to H_0\|=n$,  there is $0\not= \xi\in H_0$ 
such that $\|T_x(\xi)\|= n\|\xi\|$, and hence (by the uniform convexity of Hilbert space)
$x_j \xi x_j^*=x_1 \xi x_1^*=\xi $ for all $j$. This implies that the commutant of $\{x_j\}$ is not reduced
to the scalars, and hence  in a suitable basis 
$x_j=x^1_j\oplus x_j^2\in M_{N_1}\oplus M_{N_2}$ for some $ N_1,N_2\ge 1$   with $N_1+N_2= N$.
Then the choice of $y_j= x^1_j\oplus 0$ produces $y\in M_N^n$ not in $Orb(x)$
and such that $\|\sum x_j\otimes \bar y_j\| =n$. Thus $x$ is not an $M_N$-smooth point in $M_N(OH_n)$,
proving our claim.
 \end{proof}
\begin{rem}\label{rem} Let $E$ be any $n$-dimensional operator space with a basis $(e_j)$.
 Assume that for any $u=(u_j)\in U(N)^n$ we have
 $\|\sum u_j\otimes e_j\|_{M_N(E)}=\sqrt n$ and also that 
 $\|\sum a_j\otimes e_j\|_{M_N(E)}\le \|\sum a_j\otimes \bar a_j\|^{1/2}$ for al $a=(a_j)\in M_N^n$.
Then,   by the same proof, for any $x=(x_j)\in U(N)^n$ such that     
$\|T_x:\ H_0\to H_0\|<n$ as above,  the point
$x=\sum x_j\otimes e_j$ is an $M_N$-smooth point in $M_N(E)$. Indeed, any $y$ in the unit ball of
$M_N(E^*)$ that $M_N$-norms $x$ with respect to $M_N(E)$ is a fortiori in in the unit ball of
$M_N(OH_n)$.
 \end{rem}

Lemma \ref{lem7} above can be viewed as  a refinement of this: assuming $\|T_x:\ H_0\to H_0\|<\vp n$
we have a certain form of ``uniform smoothness" of $OH_n$ at $x$, the points that almost  $M_N$-norm $x$ up to $\delta n$
are   in the orbit of $x$ up to $f_\vp(\delta) n$. See Remark \ref{ucon} for more on this point.

\n {\bf Notation:} Let $E$ be a finite dimensional operator space. Fix $C>0$.
We denote by $ k_E(N,C) $ the smallest integer $k$ such that   there is a subspace $F$ of $M_N \oplus\cdots \oplus M_N$ (with $M_N$ repeated $k$-times)
        such that $d_N(E,F)\le C$.\\        
        Note that for any   $E\subset M_n$ we have $ k_E(N,1)=1 $ for any $N\ge n$.
        
        The next statement is our main result in this \S.
        It gives a lower bound
        for  $k_{E}(N,C_1)$ when $E=OH_n$. We will show later
        (see Lemma \ref{f}) that a similar upper bound holds
        for {\it all} $n$-dimensional operator spaces. Thus for $E=OH_n$
        (and also for $E=\ell_1^n$ or $E=R_n+C_n$, see Remark \ref{more})
       the growth of $N\mapsto k_{E}(N,C_1)$ is essentially {\it extremal}.
        
                \begin{thm} \label{os}There are numbers $C_1 >1$ , $b>0$ , $n_0>1$            such that for any $n\ge n_0$ and $N\ge 1 $,
         we have
        $$k_{OH_n}(N,C_1)\ge \exp{ b nN^2}.$$
        \end{thm}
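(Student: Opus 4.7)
The plan is to combine Theorem~\ref{goal} (many separated quantum expanders) with Lemma~\ref{lem7} (quantitative smoothness of $OH_n$ at a quantum expander) through a pigeonhole argument. I will fix $\vp\in(0,1)$ and a separation parameter $\delta_T\in(0,1)$, and use Theorem~\ref{goal} to obtain, for $n$ large enough and every $N$, a $\delta_T$-separated family $T\subset S_\vp(n,N)\cap U(N)^n$ with $|T|\ge\exp(\beta_{\delta_T}\,nN^2)$. For distinct $t,t'\in T$ the short unnumbered lemma immediately following the definition of $\delta$-separation gives $d'(x(t),x(t'))\ge\sqrt{2\delta_T\,n}$. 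My aim is to show that any $M_N$-embedding of $OH_n$ into $\ell_\infty^k(M_N)$ with distortion at most $C_1$ (for $C_1$ close to $1$) forces $k\ge|T|$.

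Suppose $k=k_{OH_n}(N,C_1)$ is realized by maps $f_1,\dots,f_k:OH_n\to M_N$ normalized so that $\|Id\otimes f_j:M_N(OH_n)\to M_N(M_N)\|\le 1$ for each $j$. Since each unitary tuple $x(t)=(x_l(t))$ satisfies $\|x(t)\|_{M_N(OH_n)}=\sqrt n$, the definition of $k_E(N,C_1)$ produces an index $j(t)$ with $\|(Id\otimes f_{j(t)})(x(t))\|_{M_N(M_N)}\ge\sqrt n/C_1$. I repackage each $f_j$ as a tuple $z(j)\in M_N^n$ via $z(j)_l=\overline{f_j(e_l)}$. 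Testing the normalization of $f_j$ on $a=z(j)$ in $M_N(OH_n)$ gives $\|\sum_l z(j)_l\otimes\overline{z(j)_l}\|\le 1$, and unpacking the operator norm on $M_N\otimes M_N$ yields the identity $\|(Id\otimes f_j)(x(t))\|_{M_N(M_N)}=\|\sum_l x_l(t)\otimes\overline{z(j)_l}\|$. Rescaling $v(t)=\sqrt n\,z(j(t))$ then gives $\|\sum_l v(t)_l\otimes\overline{v(t)_l}\|\le n$ and $\|\sum_l x_l(t)\otimes\overline{v(t)_l}\|\ge n/C_1$, which is exactly the input format of Lemma~\ref{lem7}.

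Since $x(t)\in S_\vp$ and $n/C_1>n(1-\delta)$ for every $\delta>1-1/C_1$, the contrapositive of $\eqref{eq6}\Rightarrow\eqref{eq6'}$ in Lemma~\ref{lem7} gives $d'(x(t),v(t))\le f_\vp(\delta)\sqrt n$. Because $f_\vp(\delta)=O_\vp(\delta^{1/4})$ as $\delta\to 0$, I can choose $C_1>1$ close enough to $1$ (depending only on $\vp$ and $\delta_T$) so that, taking for instance $\delta=2(1-1/C_1)$, one has $f_\vp(\delta)<\tfrac12\sqrt{2\delta_T}$. With this $C_1$, if distinct $t\ne t'$ satisfied $j(t)=j(t')$ then $v(t)=v(t')$ and the triangle inequality for $d'$ (immediate from the $U(N)\times U(N)$-invariance of $d$) would yield $d'(x(t),x(t'))<\sqrt{2\delta_T\,n}$, contradicting the $\delta_T$-separation of $T$. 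Hence $t\mapsto j(t)$ is injective and $k\ge|T|\ge\exp(\beta_{\delta_T}\,nN^2)$, which is the desired bound with $b=\beta_{\delta_T}$.

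The one genuinely delicate point is the bookkeeping in the middle paragraph: translating the raw bound $\sqrt n/C_1$ on the $M_N(M_N)$-norm supplied by the definition of $k_E(N,C_1)$ into the input required by Lemma~\ref{lem7}, namely $\|\sum x_l\otimes\overline{v_l}\|\ge n(1-\delta)$ for a pair $(x,v)$ with $x\in S_\vp$ and $\|\sum v_l\otimes\overline{v_l}\|\le n$. Once the dual tuple $z(j)$ is introduced and the $\sqrt n$ rescaling is in place the argument reduces to pigeonhole, and the quantum-expander condition $x(t)\in S_\vp$ enters only to activate the smoothness estimate of Lemma~\ref{lem7}.
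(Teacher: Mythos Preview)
Your proof is correct and follows essentially the same approach as the paper's own proof: both use Theorem~\ref{goal} to produce a large $\delta$-separated family $T\subset S_\vp$, convert the $k$ embedding maps into tuples $v\in M_N^n$ with $\|\sum v_l\otimes\bar v_l\|\le n$, apply the contrapositive of Lemma~\ref{lem7} to get $d'(x(t),v(t))$ small, and then conclude injectivity of $t\mapsto j(t)$ by the triangle inequality for $d'$ against the lower bound $d'(x(t),x(t'))\ge\sqrt{2\delta_T n}$ coming from $\delta$-separation. The paper phrases the last lower bound via the easy implication \eqref{eq6+}$\Rightarrow$\eqref{eq6''} in Lemma~\ref{lem77}, while you cite the unnumbered lemma after \eqref{compa}; these are the same fact. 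Your verification that $\|\sum z(j)_l\otimes\overline{z(j)_l}\|\le 1$ by testing $Id\otimes f_j$ on $a_l=z(j)_l$ is a nice explicit alternative to the paper's appeal to $\|\phi_t\|_{cb}\le 1$ together with the self-duality of $OH_n$.
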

        
        We start by recalling the classical argument dealing with  the Banach space case, i.e. the case $N=1$. Let 
        $E$ be an $n$-dimensional Banach space. 
        Assume that,  for some $C>1$, $E$ embeds $C$-isomorphically into $\ell_\infty^k$. For convenience we write
        $C=(1-\delta)^{-1}$ for some $\delta>0$. Our embedding assumption means that there is a set ${\cl T}$ in the unit ball of $E^*$ such
        that for any $x\in E$ we have  \begin{equation}\label{eq33}
        (1-\delta) \|x\| \le \sup_{t\in {\cl T}} |t(x)|  \le \|x\|.\end{equation}
        Then for any $x$ in the unit ball of $E$, there is $t_x\in {\cl T}$  and $\omega_x\in \CC$ with $|\omega_x|=1$ such that
        $1-\delta \le  \Re(\omega_x t_x(x) ) $.\\
        Now assume $E=\ell_2^n$.   Then   identifying       $E$ and $E^*$ as usual, 
        we see that $1-\delta \le \Re(\omega_x t_x(x) )$ implies $\|x-\omega_x t_x\|^2 \le 2 \delta$. 
         In the case of real Banach spaces, $\omega_x=\pm 1$ and we conclude quickly, but let us continue for the sake of analogy with the
        case $N>1$.
        We just proved that
        the set $\{\omega  t\mid \omega \in \T, t\in {\cl T}\}$ is a $\sqrt{2\delta}$-net in the unit ball of $E=\ell_2^n$. Fix $\vp>0$.
        Let $N(\vp)\approx 2\pi/\vp$  be  such that there is an $\vp$-net in $\T$.
        It follows that there is a $(\sqrt{2\delta}+\vp)$-net $\cl N$  in the unit ball of $E=\ell_2^n$ with $| \cl N|\le N(\vp) |{\cl T}|.$
        But  by a well known volume estimate (see e.g. \cite[p. 49-50]{P-v} ), any $\delta'$-net in the unit ball of $E=\ell_2^n$
        must have cardinality at least $(1/\delta')^n$.
        Thus we conclude $(\sqrt{2\delta}+\vp)^{-n}\le N(\vp) |{\cl T}|$.  This yields
        $$(2\pi)^{-1}\vp (\sqrt{2\delta}+\vp)^{-n} \le |{\cl T}|.$$
       For any $\delta<1/2$, we may choose $\vp>0$ so that  $\sqrt{2\delta}+\vp<1$,   thus we find that there is a number $b>0$
        for which we obtain $ |{\cl T}| \ge \exp bn$, and hence        $k_{OH_n}(1,(1-\delta)^{-1})\ge \exp{ b n}.$
        \begin{rem}\label{ucon} The preceding argument still works when $E$ is uniformly convex with modulus $\vp\mapsto \delta(\vp)$.
        This means that if $x_1,x_2$  in the unit ball $B_E$  satisfy $\|x_1-x_2\|\ge \vp$ then $\|(x_1+x_2)/2\|\le 1-\delta(\vp)$.
        Indeed, the only property we used is that for any $\vp>0$ there is $r>0$ such that
        $x_1,x_2\in B_E$ and $\xi_1,\xi_2\in B_{E^*}$ satisfy $$\Re( \xi_1(x_1))>1-r\quad  \Re( \xi_2(x_2))>1-r \quad \text{and}\quad \|\xi_1-\xi_2\|<r,$$         
        then we must have $\|x_1-x_2\|<\vp$. To check this note that
        $$\|(x_1+x_2)/2\|\ge |\xi_1   (x_1+x_2)/2 |\ge |\xi_1   (x_1 )/2+\xi_2(x_2 )/2|- \|\xi_1-\xi_2\|/2>1-r -r/2$$
        thus if $r=\delta(\vp)/2$ then we have
        $\|(x_1+x_2)/2\|>1-\delta(\vp)$ and hence we must have  $\|x_1-x_2\|<\vp$.\\
        Recall that a Banach space $E$ is uniformly convex iff its dual $E^*$ is uniformly smooth (see \cite{BL}). Thus since $E=OH_n$ is self dual, Lemma \ref{lem7} can be interpreted as the $M_N$-analogue of the uniform smoothness of $E^*$.
        \end{rem}
        
        A completely different proof, with no restriction on $\delta$ or equivalently on the constant $C$ can be given 
        by a well known argument using real or complex Gaussian random variables.
         We restrict to the real case for simplicity. 
        Let $\gamma_n $ be the canonical Gaussian measure on $\RR^n$.
        Assume  \eqref{eq33}.  Let $q= \int \exp (x^2/4) \gamma_1 (dx)<\infty$.  Note that since ${\cl T}$ is included  in the unit ball 
        we have
        $$\int  \exp (\sup_{t\in {\cl T}} t(x)^2/4) \gamma_n (dx)\le \sum\nolimits_{t\in {\cl T}}  \int  \exp (t(x)^2/4) \gamma_n (dx)\le q |{\cl T}| .$$
        But by \eqref{eq33}, if we reset  $C=(1-\delta)^{-1}$, we find $C^{-1}\|x\|\le \sup_{t\in {\cl T}}| t(x)|$ and hence
        $$(\int \exp(C^{-2}   |x|^2/4 )\gamma_1 (dx))^n  \le \int \exp(C^{-2} \sum |x_j|^2 /4) \gamma_n (dx)\le \int  \exp (\sup_{t\in {\cl T}} t(x)^2/4) \gamma_n (dx)\le q |{\cl T}| .$$
        Thus  if we define $b=b_C>0$ by  $\int \exp(C^{-2}   |x|^2/4 )\gamma_1 (dx)=\exp b$, we find
        $|{\cl T}| \ge q^{-1} \exp n b$ and we conclude
        $$   k_{OH_n}(1,C)\ge q^{-1}  \exp{ b_C n}.  $$
        See \cite{Psub} for random matrix versions of this argument.
          \begin{proof}[Proof of Theorem \ref{os}] The proof follows the strategy
          of the first proof outlined above for $N=1$, but using Theorem \ref{goal}    instead
          of the lower bound on the metric entropy of the unit ball of $\ell_2^n$. 
          Consider an $n$-dimensional operator space  $E$.
          Let  $k=k_{E}(N,C).$ Let again $C=(1-\delta)^{-1}$. Then
           there is a   set ${\cl T}$ with $|{\cl T}|=k$ and completely contractive mappings
           $\phi_t:\ E \to M_N$ such that
            \begin{equation}\label{eq4}\forall x\in M_N(E)\quad (1-\delta) \|x\|_{M_N(E)}\le \sup\nolimits_{t\in {\cl T}} \|(\phi_t)_N (x)\|_{M_N(M_N)}. \end{equation}
           Let $e_j$ be a basis for $E$ 
           so that each $x$ can be developed as $x=\sum x_j \otimes e_j\in M_N\otimes E$. Let 
           $y(t)\in M_N(E^*)$ be the element associated to    $\phi_t:\ E \to M_N$.
           Let $e_j^+\in E^*$ be the basis of $E^*$ that is biorthogonal to $(e_j)$.
           Then $y(t)$ (or equivalently $\phi_t$) can be written
           as $y(t)=\sum y_j(t)\otimes e_j^+\in M_N\otimes E^*$, and
              \eqref{eq4} can be rewritten as:
              \begin{equation}\label{eq4bis}\forall x \in M_N(E)\quad (1-\delta) \|x\|_{M_N(E)}\le \sup\nolimits_{t\in {\cl T}} \|\sum x_j \otimes y_j(t)\|_{M_N(M_N)}.  \end{equation}
             Moreover each $y(t)$ is in the unit ball of $M_N(E^*)=CB(E,M_N)$.
             We now assume $E=OH_n$.  Let  us denote by $T(\vp,\d)\subset U(N)^n$                the set  appearing in Theorem \ref{goal}. Fix $0<\vp<1$. Let us also fix a number $0<\d_0<1$.
             By Lemma \ref{lem7} we can choose $0<\d<1$ small enough so that
              \begin{equation}\label{eqrev5} 2f_{\vp}(2\delta)<\sqrt{2\delta_0 }
              . \end{equation}
             We then set $T_0=T(\vp,\d_0)$. Thus we
             have
             $| T_0 |\ge \exp {\beta_0 nN^2}$ for some
            $\beta_0>0$ and the elements of $T_0$ are
            $\d_0$-separated.
             By \eqref{eq4bis}  
              $$\forall x=(x_j) \in T_0\quad (1-\delta) n^{1/2}\le \sup\nolimits_{t\in {\cl T}} \|\sum x_j \otimes y_j(t)\|_{M_N(M_N)}. $$
              Let $ (v_j(t))=(n^{1/2} \ovl{ y_j(t)})$   so that we have
                  $$\forall x=(x_j) \in T_0\quad (1-\delta) n \le \sup\nolimits_{t\in {\cl T}} \|\sum x_j \otimes \ovl{ v_j(t)}\|_{M_N(M_N)}. $$
              For any $x\in  T_0$ there is
             a point $t_x\in \cl T$ such that 
             $$  (1-\delta) n \le   \|\sum x_j \otimes \ovl{ v_j(t_x)}\|.  $$
           Let $v_x=( v_j(t_x))$.  By  Lemma \ref{lem7}, the last inequality implies $d'(x,v_x)< f_\vp(\delta') \sqrt n$ for any $\delta'>\delta$. Moreover by the converse (much easier) part of Lemma \ref{lem77}, we know that $d'(x,y)\ge \sqrt{2\delta_0 n}$
            for any $x\not= y\in T_0$, since $x,y$ are $\delta_0$-separated. We claim that after suitably adjusting the parameters $\delta,\vp$
            we have $|T_0|\le |\cl T|$. Indeed, assume that $|T_0|> |\cl T|$, then
            there must exist $x\not=y\in T_0$ such that $v_x=v_y$. We have then for any $\delta'>\delta$
            $$  \sqrt{2\delta_0 n} \le d'(x,y)\le  d'(x,v_x)+d'( v_x,y)=d'(x,v_x)+d'( v_y,y)\le 2f_\vp(\delta') \sqrt n$$
            and hence    $\sqrt{2\delta_0 }\le2f_\vp(2\delta)$, which is impossible by \eqref{eqrev5}. This
            proves our claim
  that $|T_0 |\le |\cl T|$,  and hence $  |\cl T|\ge \exp \beta_0 nN^2$.
  Let $C_1=(1-\delta)^{-1}$. 
           Thus, with $\d$ determined by \eqref{eqrev5}, we have proved $ k_{OH_n}(N,C)\ge \exp \beta_0 nN^2$.
          \end{proof}

          \begin{rem}\label{more} Let $E$ be any $n$-dimensional operator space with a basis $(e_j)$.
 Assume that there is a scaling factor ${\lambda}>0$ (that does not play any role in the estimate) such that for any $u=(u_j)\in U(N)^n$ we have
 ${\lambda}\|\sum u_j\otimes e_j\|_{M_N(E)}=\sqrt n$ and also that 
 ${\lambda}\|\sum a_j\otimes e_j\|_{M_N(E)}\le \|\sum a_j\otimes \bar a_j\|^{1/2}$ for all $a=(a_j)\in M_N^n$.
Then,   arguing as in Remark \ref{rem}, we find $ k_{E}(N,C_1)\ge \exp \beta nN^2$.
This shows that this estimate is valid for $R_n+C_n$  (take ${\lambda}=1$) and for $\ell_1^n$ equipped with its
maximal operator space structure (take ${\lambda}=n^{-1/2}$). 
\end{rem}
          
          We now turn to the reverse  inequality     to that in    Theorem \ref{os}.             This general estimate is
          easy to check by a rather routine argument.
          
            \begin{lem}\label{f} Let $E$ be an $n$-dimensional operator space, then for any $0<\delta <1$ 
   we have
   $$  k_E(N,(1-\delta)^{-1})\le (1+2\delta^{-1})^{2nN^2}   .$$
   Therefore, for any operator space $X$, any finite dimensional subspace
   $E\subset X$   we have
   $$\forall C>1 \quad    \limsup_{N\to \infty} \frac{\log k_E(N,C)} {N^2}<\infty.$$
   \end{lem}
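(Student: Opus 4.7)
The plan is to imitate the classical volumetric bound $k_E(C)\le(3C/(C-1))^n$ quoted at the start of the introduction, working not with the unit ball of $E^*$ but with the unit ball of $M_N(E^*)$, whose real dimension is $2nN^2$.

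First I would use Smith's lemma (in the form recorded in Lemma \ref{rs}) together with the finite-dimensional identification $M_N(E^*)=CB(E,M_N)$ to write
$$ \|x\|_{M_N(E)} \;=\; \sup\{ \|(Id\otimes \phi)(x)\|_{M_N(M_N)} : \phi\in M_N(E^*),\ \|\phi\|_{M_N(E^*)}\le 1 \} $$
for every $x\in M_N(E)$. So the problem is reduced to finding a small subset of the unit ball $B$ of $M_N(E^*)$ on which the supremum above is still almost attained.

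Next, since $M_N(E^*)$ is a $2nN^2$-dimensional real normed space, the standard covering estimate (cf.\ \cite[p.49-50]{P-v}) gives a $\delta$-net $\cl N\subset B$ for the $M_N(E^*)$-norm with
$$ |\cl N| \;\le\; (1+2/\delta)^{2nN^2}. $$
Given any $\phi\in B$, choose $\psi\in\cl N$ with $\|\phi-\psi\|_{M_N(E^*)}\le \delta$. Since the norm on $M_N(E^*)$ agrees with the cb norm of the associated map $E\to M_N$, one has $\|Id\otimes(\phi-\psi)\|_{cb}\le \delta$, and therefore
$$ \|(Id\otimes\phi)(x)\|_{M_N(M_N)} \;\le\; \|(Id\otimes\psi)(x)\|_{M_N(M_N)} + \delta\,\|x\|_{M_N(E)}. $$
Taking the supremum over $\phi\in B$ on the left and then over $\psi\in\cl N$ on the right yields
$$ (1-\delta)\,\|x\|_{M_N(E)} \;\le\; \sup_{\psi\in\cl N} \|(Id\otimes\psi)(x)\|_{M_N(M_N)} \;\le\; \|x\|_{M_N(E)}, $$
which, by definition of $k_E(N,\cdot)$, gives $k_E(N,(1-\delta)^{-1})\le |\cl N|\le (1+2/\delta)^{2nN^2}$, as desired.

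For the second assertion, note that if $E\subset X$ is $n$-dimensional then the preceding bound applies to $E$ with its inherited operator space structure, and the right-hand side is a constant (depending on $n$ and $\delta$) raised to the power $2nN^2$; writing $C=(1-\delta)^{-1}$, this yields
$$ \limsup_{N\to\infty}\frac{\log k_E(N,C)}{N^2} \;\le\; 2n\log\!\left(1+\frac{2}{1-1/C}\right) \;<\;\infty. $$
No serious obstacle is expected: the only subtlety is to make sure that the $M_N(E^*)$-norm coincides with the cb norm of the corresponding map $E\to M_N$ (so that the perturbation bound $\|Id\otimes(\phi-\psi)\|_{cb}\le\delta$ is legitimate), which is exactly the content of Smith's lemma combined with the fact that $E$ is finite-dimensional.
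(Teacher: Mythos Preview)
Your proposal is correct and follows essentially the same route as the paper's proof: both take a $\delta$-net in the unit ball of $M_N(E^*)$ (real dimension $2nN^2$), use the identification $M_N(E^*)=CB(E,M_N)$ together with Smith's lemma to reduce $\|x\|_{M_N(E)}$ to a supremum over that ball, and then apply the standard triangle-inequality perturbation to pass to the net. The paper phrases the duality via $\hat x:E^*\to M_N$ and evaluates $(\hat x)_N$ on the net, while you phrase it via $\phi:E\to M_N$ and evaluate $Id\otimes\phi$ on $x$; these are the same computation up to notation.
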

   \begin{proof} Let $x\in M_N(E)$ and let $\hat x:\ E^* \to M_N$ denote the associated linear mapping.
   Recall $ \|x\|=\|\hat x\|_{cb}$.   
   By Lemma \ref{rs} 
   $\|\hat x\|_{cb}=\sup\{ \|(\hat x)_N (y)\|_{M_N(M_N)}\mid y \in B_N\}$ where we denote here by $B_N$ the unit ball
   of $M_N(E^*)$ viewed as a real space. Since the latter ball is $2nN^2$-dimensional, it contains a $\delta$-net
   $\{y_i\mid i\le m\}$ with cardinality $m\le (1+2\delta^{-1})^{2nN^2} $ (see e.g. \cite[p. 49-50]{P-v}). By an elementary estimate,
   we have then 
   (for any $x\in M_N(E)$) 
     \begin{equation}\label{eq333}\sup_{i\le m}  \|(\hat x)_N (y_i)\|\le \|\hat x\|_{cb}=\|x\|\le (1-\delta)^{-1} \sup_{i\le m}  \|(\hat x)_N (y_i)\|.\end{equation}
   Let $u:\ E \to \oplus_{i\le m} M_N$  be the mapping defined by (here again $\hat{y_i}:\ E\to M_N$ is associated to $y_i$)
   $$u(e)= \oplus_{i\le m}  \hat{y_i}(e)$$ for any $e\in E$.
   Let $F\subset \oplus_{i\le m} M_N$ be the range of $u$.
   Then \eqref{eq333} says that $\|u_N\|\le 1$
   and  $\|u^{-1}_N\|\le 1+\delta$, and hence $d_N(E,F)\le (1-\delta)^{-1}$. Thus $k_E(N,(1-\delta)^{-1})\le m$.
      \end{proof}

          \begin{dfn}\label{def} An operator space $X$ will be  called matricially
 $C$-subGaussian     if
  $$\limsup_{N\to \infty} \frac{\log k_E(N,C)} {N^2}=0.$$
for any finite dimensional subspace $E\subset X$. We say that $X$ is    matricially
subGaussian
if it is matricially $C$-subGaussian for some $C\ge1$. (See Remark \ref{exp} for 
the reason behind ``matricially"). \\
{\bf Note:} If $X$ itself is finite dimensional, it suffices to consider $E=X$.\\
 We will denote by $C_g(X)$ the smallest $C$ such that 
$X$ is matricially $C$-subGaussian.
\end{dfn}

The preceding result (resp. Remark \ref{more}) shows that when $C<C_1$, then $OH$ 
(resp.  $\ell_1$ or $R+C$) is not matricially $C$-subGaussian. In sharp contrast, any $C$-exact operator space
(we recall the definition below) $E$ is clearly matricially $C$-subGaussian since,
for any $c>C$, it satisfies   $  k_E(N,c)=1$ for all $N$ large enough. We do not know whether conversely
the latter property implies
that $E$ is $C$-exact (but we doubt it). 

  \begin{rem} Given an operator space $X$, it is natural to introduce the following parameter:
    $$k_X(N,C;d)=\sup\{ k_E(N,C) \mid E\subset X,\ \dim(E)=d\}.$$
       We will say that $X$ is     uniformly matricially subGaussian
     if there is $C$ such that 
    $$\forall d\ge1\quad   \limsup_{N\to \infty} \frac{\log k_X(N,C;d)} {N}=0.$$
It is   easy to check that if $X$ is     uniformly  exact (resp.  uniformly subexponential, rresp. 
 uniformly matricially  subGaussian) then all ultrapowers of $X$ are exact
  (resp.   subexponential, rresp. matricially
  subGaussian).
  Note however (I am indebted to Yanqi Qiu for this remark) that the converse is unclear.\\
  For example, $R$ or $C$ (or $R\oplus C$),   any commutative
  $C^*$ algebra $A$, or  any space of the form $A \otimes_{\min} M_N$  
  is       uniformly  exact. It would be interesting to
  characterize   uniformly  exact operator spaces. \end{rem}

We now turn to a different application of quantum expanders to operator spaces, that 
requires a refinement of our main result.

For any $n\times n$   matrix   $w$   and any $v\in M_N^n$,
we denote by $w.v \in M_N^n$ the $n$-tuple defined by
$$(w.v)_i =\sum\nolimits _j w_{ij} v_j.$$
Note that if $w$ is unitary, i.e. $w\in U(n)$ then
 \begin{equation}\label{eq21}\sum\nolimits_i (w.v)_i \otimes \overline{ (w.v)_i } =\sum\nolimits_j v_j \otimes \bar v_j.\end{equation}
Also note that, if $w\in U(n)$, for any $v,v'\in M_N^n$ we have 
 \begin{equation}\label{eq22}d(w.v,w.v')\le d(v,v').\end{equation}
Moreover, it is easy to check (e.g. using \eqref{25}) that
for all   $w\in M_n$ with operator norm $\|w\|$ and for all $v\in M_N^n$
we have
 \begin{equation}\label{eq23}\|\sum (w.v)_i \otimes \overline{ (w.v)_i } \|\le \|w\|^2 \|\sum\nolimits_j v_j \otimes \bar v_j\|,\end{equation}
and hence by \eqref{eqcsh} for any $u,v\in U(N)^n$
 \begin{equation}\label{eq24}\|\sum u_i \otimes \overline{ (w.v)_i } \|\le \|w\| n.\end{equation}
Also
$$d(w.v,w.v')\le \|w\| d(v,v').$$
We will say that $u,v\in U(N)^n$ are strongly $\delta$-separated if 
$v$ and $w.u$ are $\delta$-separated for any $w\in U(n)$. Equivalently, for any pair $w,w'\in U(n)$
the pair $(w.u,w'.v)$ is $\delta$-separated. \\
Explicitly, this can be written like this:
 \begin{equation}\label{26}  \forall w \in U(n)\quad
 \| \sum\nolimits_{ij}  w_{ij} u_j \otimes \bar v_i \|\le   n(1-\delta).
 \end{equation}
We will use again (see Lemma \ref{lem6}) the following elementary fact : There is a positive constant $D$ such that
for each $0<\xi<1$ and each $n$ there is an $\xi$-net  $\cl N_\xi\subset U(n)$ 
with respect to the operator norm, of cardinality
$$|\cl N_\xi|\le (D/\xi)^{2n^2}.$$

We will need the following refinement of Theorem \ref{goal}.
\begin{lem} For each $0<\d<1$ there is a  constant $\beta'_{\d}>0$   such that for any  $0<\vp<1$   and
for all  $n\ge n_0$ and all $N$ such that $N^2/n\ge N_0$ (with $n_0$ depending
on   $\vp$ and $\d$, and $ N_0$  depending on $\d$),  there is a strongly $\delta$-separated subset $T_1\subset S_\vp$  such that
$|T_1|\ge \exp{\beta'_{\d} nN^2}.$ More generally, for each $\alpha>0$, there are  $\beta'_{\d,\alpha}>0$ and $n_0=n_0(\vp,\d,\alpha)$ such that, if  $n\ge n_0$ and $N^2/n\ge N_0$, any subset 
$A_N\subset U(N)^n$ with $\P(A_N)>\alpha$ contains a strongly $\delta$-separated subset of $S_\vp$
with  cardinal $\ge \exp{\beta'_{\d,\alpha} nN^2}$.
 \end{lem}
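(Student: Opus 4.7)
The plan is to reduce the desired strong $\delta$-separation to a single packing condition on $A_N\cap S_\vp$ and then solve the packing problem by a volume argument patterned on Lemmas \ref{lem5}-\ref{lem6}, combined with an $\xi$-net over $U(n)$. The first observation is that for $u,v\in U(N)^n$, the quantity $\|\sum_{ij} w_{ij} u_j\otimes \bar v_i\|$ is $n$-Lipschitz in $w\in U(n)$: applying \eqref{eqcsh} to $((w-w_0).u)_i$ and $v_i$ together with \eqref{eq23} gives
\[
\bigl\|\sum\nolimits_i((w-w_0).u)_i\otimes \bar v_i\bigr\|\le \|w-w_0\|\,n.
\]
Hence, choosing an $\xi$-net $\cl N_\xi\subset U(n)$ of cardinality $\le(D/\xi)^{2n^2}$ with $\xi=(1-\d)/4$, it suffices to enforce $\|\sum (w_0.u)_i\otimes \bar v_i\|\le n(1-\d-\xi)$ uniformly for $w_0\in\cl N_\xi$.

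Next I convert that norm bound into a geometric condition. Since $u\in U(N)^n$ and $w_0\in U(n)$, \eqref{eq21} gives $\|\sum(w_0.u)_j\otimes\overline{(w_0.u)_j}\|=n$. Applying Lemma \ref{lem77} with the roles of $u$ and $v$ swapped (and reducing to the case $\vp\le\vp_0(\d)$ with $\vp_0$ small enough that $2\vp_0^{1/2}\le(1-\d-\xi)/2$, which is harmless since $S_{\vp_0}\subset S_\vp$), there exists $\vp'=\vp'(\d)\in(0,1)$ such that if $v\in S_\vp$ and $d'(v,w_0.u)\ge c\sqrt n$ with $c=\sqrt{2(1-\vp')}<\sqrt 2$, then $\|\sum v_i\otimes\overline{(w_0.u)_i}\|\le n(1-\d-\xi)$. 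By the symmetry $\|\sum x_i\otimes\bar y_i\|=\|\sum y_i\otimes\bar x_i\|$, the same bound holds with $(w_0.u)_i$ and $v_i$ swapped. It is therefore enough to construct $T_1\subset A_N\cap S_\vp$ of size $\ge\exp\beta'_{\d,\alpha}nN^2$ such that $d'(v,w_0.u)\ge c\sqrt n$ for all distinct $u,v\in T_1$ and all $w_0\in\cl N_\xi$.

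For the packing itself, enlarging $n_0$ so that $\P(S_\vp)\ge 1-\alpha/2$ (via \eqref{a44} and Tchebyshev, as in the proof of Theorem \ref{goal}), we have $\P(A_N\cap S_\vp)\ge\alpha/2$. Fixing $\eta>0$ and using a $(K/\eta)^{2N^2}$-net in $U(N)$, for any $y\in M_N^n$,
\[
\P\{v\in U(N)^n\mid d'(v,y)<c\sqrt n\}\le (K/\eta)^{4N^2}F(c+2\eta)\le (K/\eta)^{4N^2}\exp(-KnN^2 r),
\]
where $r=(1-(c+2\eta)^2/2)^2>0$ depends only on $\d$ once $\eta$ is chosen small. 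Taking a maximal subfamily $T_1\subset A_N\cap S_\vp$ with the required separation, maximality gives
\[
\alpha/2\le |T_1|\cdot(D/\xi)^{2n^2}\cdot(K/\eta)^{4N^2}\cdot\exp(-KnN^2 r),
\]
and solving yields $|T_1|\ge(\alpha/2)\exp\{KnN^2r-2n^2\log(D/\xi)-4N^2\log(K/\eta)\}$. Setting $\beta'_{\d,\alpha}=Kr/2$, the conclusion $|T_1|\ge\exp\beta'_{\d,\alpha}nN^2$ follows once the two error terms are absorbed: the $4N^2\log(K/\eta)$ term requires $n\ge n_0(\vp,\d,\alpha)$, while the $2n^2\log(D/\xi)$ term requires exactly $N^2/n\ge N_0(\d)$.

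The main obstacle is this last point. The $\Theta(n^2)$ metric entropy of $U(n)$, which enters through the net $\cl N_\xi$, has to be dominated by the $\Theta(nN^2)$ gain from the spectral-gap packing on $U(N)^n$; there is no way around sacrificing some ratio of $N^2$ to $n$, and this is the sole reason the hypothesis $N^2/n\ge N_0(\d)$ appears in the statement (as opposed to Theorem \ref{goal}, whose proof involved no net on $U(n)$).
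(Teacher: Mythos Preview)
Your proof is correct and follows essentially the same approach as the paper's: an $\xi$-net in $U(n)$, Lemma~\ref{lem77} to pass between the norm bound and a $d'$-separation, and the volume argument of Lemmas~\ref{lem5}--\ref{lem6}, with the $(D/\xi)^{2n^2}$ factor from the $U(n)$-net being exactly what forces the hypothesis $N^2/n\ge N_0(\d)$. The only (cosmetic) difference is that the paper takes $T_1$ maximal among strongly $\d$-separated subsets of $S_\vp$ and then applies Lemma~\ref{lem77} contrapositively to exhibit $\{w'.x:w'\in\cl N_\xi,\ x\in T_1\}$ as a $d'$-net, whereas you first convert to the $d'$-condition and take a maximal packing for that; the two orderings are interchangeable.
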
 
 \begin{proof} Fix $0<\d<1$ and let $\xi=(1-\d)/2$
so that $\d_1=\d+\xi=(1+\d)/2$. Note that $0<\d<\d_1<1$.
We define $\vp_0$ so that $2\vp_0^{1/2}=(1-\d_1)/2$ and $\vp'$ so that
$\vp'^{1/5} (2^{-4/5}+2^{6/5})=(1-\d_1)/2$.
Note that $0<\vp_0,\vp'<1$ and 
$$1-\d_1=\vp'^{1/5} (2^{-4/5}+2^{6/5}) +2\vp_0^{1/2}.$$
Now assume $0<\vp\le \vp_0$.  By Lemma \ref{lem77} we know that for any $u\in S_\vp$ and any
$v\in U(N)^n$ such that $\|\sum v_j \otimes \bar v_j\|\le n$ we have 
\begin{equation}\label{eq007}
\|\sum u_j \otimes \bar v_j\| >n(1-\d_1) \Rightarrow d'(u,v)<\sqrt{2n(1-\vp')}.
\end{equation}
By Lemmas \ref{lem5} and  \ref{lem6} and using \eqref{a44} 
as in the proof of Theorem \ref{goal} we know that for $n\ge n_0(\vp,\vp')$
$$N(S_\vp,d', \sqrt{2n(1-\vp')}) \ge \exp{ b'nN^2}$$
for some $b'$ depending only on $\d$ (more precisely
we set again $r=\vp'^2$,   $b=K\vp'^2/2$ and  $b'=b/2$).

Let $T_1\subset S_\vp $ be a maximal subset
such that any two points in $T_1$ are strongly  $\delta$-separated. By maximality of $T_1$
for any $u\in S_\vp$ there is $x\in T_1$ such that $u,x$ are not strongly $\delta$-separated.
This means that there is $w\in U(n)$ such that
$$\|\sum u_j \otimes \overline{(w.x)_j} \|> n(1-\delta).$$
Choose $w'\in \cl N_\xi$ such that $\|w-w'\|\le \xi$.
Then by \eqref{eq24} and the triangle inequality we have
$$\|\sum u_j \otimes \overline{(w'.x)_j} \|\ge \|\sum u_j \otimes \overline{(w.x)_j} \|-n\xi  >n(1-\delta -\xi)=1-\d_1.$$
By \eqref{eq007} it follows that $d'(u, w'.x)<\sqrt{2n(1-\vp')}$.
In other words, we find that the set $T_2=\{w'. x\mid w'\in\cl N_\xi,  \ x\in T_1\}$
is a $\sqrt{2n(1-\vp')}$-net for $S_\vp$, and hence
$$\exp{ b'nN^2}\le N(S_\vp,d', \sqrt{2n(1-\vp')}) \le |T_2| \le  |\cl N_\xi| |T_1|\le (D/\xi)^{2n^2} | T_1|$$
This yields 
  $$|T_1|\ge (2D/(1-\delta))^{-2n^2} \exp b' n N^2.$$
  Assuming $\vp\le \vp_0$, this completes the proof,    since for $N^2/n\ge N_0(\d)$  the first factor
  can be absorbed, say, by choosing $\beta'_\d=b'/2$. The case  $\vp_0< \vp<1$ follows a fortiori since
  $S_{\vp_0} \subset S_\vp$.\\
   The   last assertion follows  (for  suitably adjusted values of $\beta'_\d$ and $n_0$) as in Remark \ref{mg}.
   Indeed,   choosing $n_0$ large enough (depending on $\alpha$) we can make sure that $\P(S_\vp)>1-\alpha/2$ so that
   $\P(A_N \cap S_\vp)>\alpha/2$. We can then run the preceding proof using the set $A_N \cap S_\vp$
   in place of $S_\vp$.
 \end{proof}

\begin{thm}\label{ent}  
For any $R>1$,    there are numbers $\beta_1>0$,   $n_0>1$ and a function
        $n\mapsto N_0(n)$ from $\NN $ to itself 
          such that for any $n\ge n_0$ and $N\ge N_0(n) $, there is a 
family  $\{E_t\mid t \in T_1\}$ of $n$-dimensional subspaces of $M_N$, 
with cardinality 
$|T_1|\ge \exp{\beta_1 nN^2},$ such that for
any $s\not= t\in T_1$ we have
$$d_{cb}(E_s,E_t)>R.$$
\end{thm}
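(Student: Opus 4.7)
The plan is to take the strongly separated quantum expanders produced by the preceding lemma, restrict to those satisfying a uniform Haagerup-type estimate, and show that their linear spans are $d_{cb}$-far apart. Concretely, for a parameter $\delta = \delta(R) < 1$ to be chosen at the end, I apply the preceding lemma's $A_N$-variant (in the spirit of Remark \ref{mg}) to obtain a strongly $\delta$-separated family $T_1 \subset S_\vp \cap A_N$ with $|T_1| \ge \exp(\beta_1 nN^2)$, where $A_N \subset U(N)^n$ is the set of $u$ for which $u_1,\dots,u_n$ are linearly independent and satisfy a two-sided Haagerup-type bound
\[
c\,\|\lambda\|_2 \;\le\; \|\sum\nolimits_j \lambda_j u_j\|_{M_N} \;\le\; C\,\|\lambda\|_2 \qquad \forall\lambda \in \CC^n,
\]
with fixed universal constants $0<c\le C$. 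The noncommutative Khintchine / free-probability asymptotics for Haar random unitaries give $\P(A_N)\ge 1/2$ once $N^2/n$ is sufficiently large, so this intersection preserves the exponential cardinality. Put $E_u := \mathrm{span}(u_1,\dots,u_n) \subset M_N$, an honest $n$-dimensional subspace.

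The core claim is that distinct $u,v \in T_1$ yield $d_{cb}(E_u,E_v) > R$. Suppose for contradiction $T:E_u\to E_v$ is a cb-isomorphism with $\|T\|_{cb}\,\|T^{-1}\|_{cb}\le R$, and write $T(u_j) = \sum_i a_{ij} v_i$. Since $E_v\subset M_N$ is trivially an $M_N$-space, Lemma \ref{rs} gives $A:=\|T\|_{cb}=\|T_N\|$ and $B:=\|T^{-1}\|_{cb}=\|T_N^{-1}\|$. I test $T_N$ on $X := \sum_j \bar u_j \otimes u_j \in M_N(E_u)$: the maximally entangled vector $\Omega = \sum_k e_k\otimes e_k$ is a common eigenvector of each $\bar u_j \otimes u_j$ with eigenvalue $1$ (because $u_j u_j^* = I$), so $\|X\|_{M_N\otimes M_N} = n$. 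An isometric complex-conjugation on $B(\ell_2^N\otimes \ell_2^N)$ identifies
\[
\|T_N(X)\|_{M_N\otimes M_N} \;=\; \|\,\sum\nolimits_{i,j}\bar a_{ij}\,u_j\otimes \bar v_i\,\|
\]
in the paper's convention (operator norm of $\xi\mapsto \sum\bar a_{ij}u_j\xi v_i^*$ on $S_2^N$). Russo--Dye (closed unit ball of $M_n$ equals closed convex hull of $U(n)$) upgrades the strong separation bound to arbitrary $w\in M_n$ with the linear scaling $\|\sum w_{ij}u_j\otimes \bar v_i\| \le n(1-\delta)\|w\|_{op}$; applying this with $w=\bar a$ yields
\[
\|T_N(X)\| \;\le\; n(1-\delta)\,\|a\|_{op}.
\]

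Comparing with the isomorphism inequality $\|T_N(X)\| \ge \|X\|/\|T_N^{-1}\| = n/B$ gives $B\,\|a\|_{op} \ge 1/(1-\delta)$. The Haagerup conditions sandwich $\|a\|_{op}$: for any $\lambda\in\CC^n$,
\[
c\,\|a\lambda\|_2 \;\le\; \|\sum(a\lambda)_iv_i\|_{M_N} \;=\; \|T(\sum\lambda_ju_j)\|_{M_N} \;\le\; A\,\|\sum\lambda_ju_j\|_{M_N} \;\le\; AC\|\lambda\|_2,
\]
so $\|a\|_{op} \le AC/c$. Multiplying the two estimates gives $AB \ge c/((1-\delta)C)$, and the choice $\delta := 1 - c/(2RC)$, which depends only on $R$ and the universal constants, forces $AB > R$ and contradicts the hypothesis. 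The main obstacle I expect is the uniform lower Haagerup-type estimate $\|\sum \lambda_j u_j\|_{M_N} \ge c\|\lambda\|_2$ for a set of $u$ of probability bounded below by $1/2$: this free-probability ingredient (controlling the smallest singular value of a random-unitary ``basis'') must be established before the refined lemma is invoked so that the exponential cardinality $\exp(\beta_1 nN^2)$ survives the intersection with $A_N$.
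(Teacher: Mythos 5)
Your argument is essentially correct in structure, but it takes a genuinely different route from the paper's, and it leans on one unproven probabilistic ingredient that the paper's route is specifically designed to avoid. The paper also starts from the strongly $\delta$-separated family and also upgrades \eqref{26} to arbitrary $w\in M_n$ by the extreme-point (Russo--Dye) argument, i.e.\ \eqref{27}; but instead of testing $T_N$ on the single element $X=\sum \bar u_j\otimes u_j$, it runs a \emph{symmetric} trace-duality argument: from $\langle (\sum a_{ij}t_i\otimes \bar t_j)(I),I\rangle=\sum a_{ii}+\text{(cross terms)}$ one gets $|{\rm tr}(a)|\le \|\sum a_{ij}t_i\otimes\bar t_j\|+\gamma_1\|a\|_1$, whence ${\rm tr}|W|\le n(1-\delta)^{1/2}\|W\|_{cb}$ for every $W:E_s\to E_t$; applying this to both $W$ and $W^{-1}$ and using $n^2\le {\rm tr}|W|\,{\rm tr}|W^{-1}|$ gives the conclusion. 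The only random-matrix input this needs is that the off-diagonal overlaps $\tau_N(t_it_j^*)$, $i\ne j$, are small with probability $\ge 1/2$, which follows from plain weak convergence. Your asymmetric argument (upper bound on $\|T_N(X)\|$ from strong separation, lower bound from the cb-isomorphism) is fine as far as it goes — the eigenvector computation giving $\|X\|=n$, the conjugation identity, and the bound $\|T_N(X)\|\le n(1-\delta)\|a\|_{op}$ are all correct — but to close the loop you must control $\|a\|_{op}$, and for that you invoke a \emph{uniform} two-sided Haagerup estimate $c\|\lambda\|_2\le\|\sum\lambda_ju_j\|\le C\|\lambda\|_2$ on a set $A_N$ of probability $\ge 1/2$.

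That upper estimate is the genuine gap. The lower bound is cheap (it is essentially the paper's condition $\Delta_{N,n}(t)<\gamma_1$, via $\|\sum\lambda_ju_j\|\ge\|\sum\lambda_ju_j\|_{L_2(\tau_N)}$). But the uniform upper bound $\sup_{\|\lambda\|_2\le1}\|\sum\lambda_jU_j\|\le C$ with universal $C$ does \emph{not} follow from anything in the paper: the trivial row estimate only gives $\sqrt n\,\|\lambda\|_2$, and $\|\sum U_j\otimes\bar U_j\|=n$ exactly (the spectral gap lives on $H_0$ only), so neither $S_\vp$ membership nor Haagerup's inequality \eqref{eqcsh} helps. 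You need either strong asymptotic freeness of Haar unitaries or a Gaussian-comparison plus concentration plus net-over-the-sphere argument to get $\P(A_N)\ge1/2$ for $N\ge N_0(n)$; this is true, and compatible with the Remark \ref{mg}-type variant of the lemma you invoke, but it is a nontrivial estimate that must actually be proved. If you supply it, your proof works (with $\delta=1-c/(2RC)$); the paper's version is preferable precisely because its symmetric use of $W$ and $W^{-1}$ eliminates any need to bound $\|a\|_{op}$, and hence any need for the Haagerup upper bound.
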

   \begin{proof} Fix $0<\d<1$. We will prove this for $R=(1-\d)^{-1}$.
   We will use the set $T_1$ from the preceding Lemma
 and we let $E_t={\rm span}\{t_1,\cdots,t_n\}$.  
 We may clearly assume (say by perturbation)
 that    $\{t_1,\cdots,t_n\}$ are linearly independent for all $t\in T_1$
 so that $\dim(E_t)=n$ (but this will be automatic, see below).
 Consider $s\not= t\in T_1$. 
 Let $W\in M_n$, and let $W:\ E_s\to E_t$ denote the associated  linear map
 so that $W s_j=\sum\nolimits_i W_{i j} t_i$.\\
We claim that   we can ``make sure" that for all $N$ large enough
$${\rm tr}|W | \le n (1-\delta)^{1/2}.$$
We first clarify what we mean here by ``$N$ large enough". Let $0<\gamma_1<1$ be such that
 \begin{equation}\label{30}(1-\gamma_1)^{-1} (1-\delta) =(1-\delta)^{1/2}, \end{equation}
let
$$ \Delta_{N,n}(t)=n^2\sup\nolimits_{i\not= j}|  {\tau_N} (t_i t_j^*)|.$$ Then
we require that $N$ is large enough (depending on a fixed $n$) so that 
with respect to the uniform probability on $U(N)^n$ we have 
 \begin{equation}\label{29}
 \P\{ t\in U(N)^n\mid \Delta_{N,n}(t) <\gamma_1\}> 1/2.
 \end{equation}
 Clearly this is possible because, by the almost sure weak convergence, 
 we know that ${\tau_N} (t_i t_j^*)\to 0$
 when $N\to \infty$ for any $1\le i\not= j\le n$.\\
 Using  the last assertion in the preceding Lemma,
 we see that we may assume 
 $$\forall t\in T_1\quad  \Delta_{N,n}(t) <\gamma_1.$$
 To verify the above claim, we will use an idea from \cite{OR} (refining one in \cite{JP}). 
First
we note that for any matrix $a=[a_{ij}]$ and  for any $t\in U(N)^n$, if we assume
  ${\tau_N}(t_i t_j^*)=0$ for all $i\not= j$, then  we have by  \eqref{25}
$$|{\rm tr}(a)|\le \|\sum a_{ij} t_i \otimes \bar t_j\|.$$
More generally, 
with the notation from \eqref{25},
 we have $\langle \sum a_{ij} t_i \otimes \bar { t_j}(I),I\rangle=
\sum a_{ii}  +\sum\nolimits_{i\not= j} a_{ij}   {\tau_N} (t_i t_j^*) $ and 
$|\sum\nolimits_{i\not= j} a_{ij}   {\tau_N} (t_i t_j^*)| \le \Delta_{N,n}  \sup\nolimits_{i\not= j} |a_{ij}|.$
Therefore, without this assumption, we still have 
 \begin{equation}\label{28}|{\rm tr}(a)|\le  \|\sum a_{ij} t_i \otimes \bar t_j\|  +\gamma_1 \|a\|_1.
 \end{equation}
 By \eqref{26} and an extreme point argument (since the unitaries are the extreme points of the unit ball
of $M_n$) we have
for any $s\not= t\in T_1$ and any $w\in M_n$
 \begin{equation}\label{27} \| \sum \bar w_{ij} s_i \otimes \bar t_j \|\le \|w\|  n(1-\delta). \end{equation}
Now  we can write for any  $W:\ E_s\to E_t$ by \eqref{27}
 \def\o{\overline}
 $$\|\sum W s_j \otimes \o{  (w.t)_j} \|\le \|W\|_{cb} \|\sum   s_j \otimes \o{  (w.t)_j} \|\le 
  \|W\|_{cb} \|w\|n(1-\delta)$$
 Therefore
 $$\|\sum\nolimits_{ijk} W_{ij} \o{w_{jk} } t_i \otimes \o{ t_k}   \|\le 
  \|W\|_{cb} \|w\|n(1-\delta)$$
 hence (replacing $w$ by its transpose) by \eqref{28}  we have 
 $$ |{\rm tr}  (Ww^*)|   \le   \|W\|_{cb} \|w\|n(1-\delta) +\gamma_1 \|Ww^*\|_1,
 $$
 and hence taking the sup over all $w\in U(n)$
 $$\|W \|_1={\rm tr}|W|   \le    \|W\|_{cb} n(1-\delta) +\gamma_1 \|W \|_1.
 $$
 Thus, we conclude by \eqref{30}
  \begin{equation}\label{31}  {\rm tr}|W|   \le  \|W\|_{cb} n (1-\gamma_1)^{-1}   (1-\delta)=   n \|W\|_{cb}(1-\delta)^{1/2}.
\end{equation}
 Applying \eqref{31} with $W^{-1}$ in place of $W$ we find 
 $$  {\rm tr}|W^{-1}|   \le     n \|W^{-1}\|_{cb}(1-\delta)^{1/2},$$
 and hence
 $$ {\rm tr}|W| {\rm tr}|W^{-1}| \le  n^2 \|W\|_{cb} \|W^{-1}\|_{cb}(1-\delta),$$
 but we will immediately justify that any invertible matrix in $M_n$ satisfies
  \begin{equation}\label{32} n^2\le  {\rm tr}|W| {\rm tr}|W^{-1}| ,\end{equation}
 so that we obtain
 $$d_{cb}(E_s,E_t)\ge (1-\delta)^{-1}=R.$$
To check \eqref{32} recall that for any pair $W_1,W_2\in M_n$
the Schatten $p$-norms $\|.\|_p$
 satisfy whenever $0<p,q,r$ and $1/r=1/p+1/q$
$$\|W_1W_2\|_r\le \|W_1 \|_p \| W_2\|_q. $$
Moreover  $\|I\|_r=n^{1/r}$. 
Therefore, \eqref{32} follows by taking $r=1/2$ and $p=q=1$.
 \end{proof}
   \section{Random matrices and subexponential operator spaces}\label{s3}

In a forthcoming  sequel to this paper \cite{Psub}, we   introduce and  study  a generalization of the notion of  exact operator space
that we call subexponential. We briefly outline this here.

Our goal is to study a generalization of the notion of exact operator space for which the version of Grothendieck's theorem obtained in \cite {PS} is still valid.

\n {\bf Notation:}  Let $E$ be a finite dimensional operator space. Fix $C>0$.
We denote by $K_E(N,C)$ the smallest integer $K$ such that
there is an operator subspace $F\subset M_K$
such that
$$d_N(E,F)\le C.$$

Note that obviously

 \begin{equation}\label{comp}K_E(N,C)\le N k_E(N,C). \end{equation}

\begin{dfn} We say that an operator space $X$ is $C$-subexponential
if $$\limsup_{N\to \infty} \frac{\log K_E(N,C)} {N}=0,$$
for any finite dimensional subspace $E\subset X$.
We say that $X$ is    subexponential
if it is $C$-subexponential for some $C\ge1$.\\
{\bf Note:} If $X$ itself is finite dimensional, it suffices to consider $E=X$.\\
 We will denote by $C(X)$ the smallest $C$ such that 
$X$ is $C$-subexponential.
\end{dfn}

Recall that an operator space $X$ is called $C$-exact 
if for any finite dimensional subspace $E\subset X$
and any $c>C$ there is a $k$ and $F\subset M_k$
such that $d_{cb}(E,F)<c$. We denote by $ex(X)$ the smallest such $C$.
We say that $X$ is exact if it is $C$-exact for some $C\ge1$.\\
We observe in \cite{Psub} that  a finite dimensional $E$ is $C$-exact iff
for any $c>C$ the sequence $N\mapsto K_E(N,c)$ is bounded.
In this light ``subexponential" seems considerably more general than ``exact".

As shown by Kirchberg, a $C^*$-algebra is exact iff it is $1$-exact.
We do not know whether the analogue of this for subexponential  (or for matricially subGaussian) $C^*$-algebras  is true.
See \cite[ch.17]{P4} or \cite{BO} for more background on exactness.

In \cite{Psub} we show that for essentially all the results proved in either \cite{JP} or \cite{PS}
we can replace    exact by subexponential in the assumptions. Moreover, we show
that there is a $1$-subexponential $C^*$-algebra that is not exact.
    
                    \begin{rem}\label{exp} In the same vein, it is natural to call an operator space $X$  $C$-subGaussian
if \\
$\limsup_{N\to \infty} {N^{-2}} {\log K_E(N,C)} =0$
for any finite dimensional subspace $E\subset X$. We do not have 
significant information about this class at this point, but 
to avoid confusion, we decided to call ``matricially subGaussian"
the spaces in Definition \ref{def}. Clearly by \eqref{comp} ``matricially subGaussian"
implies ``subGaussian" but the converse is unclear.
\end{rem}

   \n {\bf Problems:}\\
    1) Let $C>1$. Assume that a finite dimensional space $E$ satisfies $k_E(N,C)\le 1$
   for all $N$. What does that imply on $E$ ? Is $E$ exact  with a control on its exactness constant ? \\
   2) Assume   $E$   subexponential for some constant $C$. What growth does that imply for  $N \mapsto k_E(N,C)$ (here $C$ could be   a different constant) ?\\
     3) What is the order of growth (when $N\to \infty$) of $\log K_E(N,C)$ for $E=\ell_1^n$ or $E=OH_n$ ? In particular, when $C$ is close to $1$,
   is it $O(N)$ ? or to the contrary does it grow  like $N^2$ ?

         \bigskip
  
      \section    {\bf Appendix}
         \def\o{\overline}
            \def\tr{{\rm tr }}
       
         \def\o{\overline}
            \def\tr{{\rm tr }}
        In this appendix we give a quick proof of an inequality that can be substituted in \S  2 to Hastings's
        result from \cite{Ha}, quoted  above as  Lemma \ref{has}. Our inequality is less sharp
        in some respect but stronger in some other. We only prove
        that (for some numerical constant $C$)   $\P\{(u_j)\in U(N)^n\mid \|(\sum u_j \otimes \bar u_j)(1-P)\| >4C\sqrt{n}+\vp n\}\to 1$ when $N\to \infty$
        for any $\vp>0$, while Hastings proves this with 
        $2\sqrt{n-1}$ in place of $4C\sqrt{n}$ which is best possible. However the inequality below
         remains valid with more general (and even matricial) coefficients,
         and it gives a bound valid uniformly for all sizes $N$ (see \eqref{a44}). It shows that up to a universal constant
        all moments of the norm of a linear combination of  the form $$S=\sum\nolimits_j   a_j  U_j \otimes \bar U_j (1-P)$$ are dominated
        by those of  the corresponding Gaussian sum
        $$S'=\sum\nolimits_j  a_j Y_j \otimes \bar Y'_j .$$
        The advantage is that  $S'$ is now simply separately  a Gaussian random variable
        with respect to the independent Gaussian random  matrices $(Y_j)$ and $(Y'_j)$.\\
        We recall that we denote by $P$
        the orthogonal projection onto the multiples of the identity. 
        Also recall we denote by $S_2^N$ the space $M_N$ equipped with the Hilbert-Schmidt norm  (recall    $S_2^N\simeq \ell_2^N \otimes_2 \o{\ell_2^N}$).
        We will view elements
        of the form $\sum x_j \otimes \bar y_j$ with $x_j,y_j\in M_N$
        as linear operators acting on $S_2^N$  as follows
        $$T(\xi)= \sum\nolimits_j x_j\xi y_j^*,$$
        so that      \begin{equation}\label{i34}\|\sum x_j \otimes \bar y_j\|=\|T\|_{B(S_2^N)}.\end{equation}
        
        We denote by $(U_j)$   a sequence of i.i.d. random $N\times N$-matrices uniformly distributed over the unitary group $U(N)$.
        We will denote by  $(Y_j)$ a sequence of i.i.d. Gaussian random $N\times N$-matrices, more precisely each $Y_j$ is distributed like the variable $Y$
        that is such that  $\{Y(i,j) N^{1/2}\}$ is a standard family of $N^2$ independent complex Gaussian variables with mean zero and variance 1.
        In other words $Y(i,j) =(2N)^{-1/2}(g_{ij} + \sqrt{-1} g'_{ij})$ where  $g_{ij} ,g'_{ij} $ are independent Gaussian normal $N(0,1)$ random variables.
        
        We denote by $(Y'_j)$ an independent copy of $(Y_j)$.
        
        We will denote by $\|.\|_q$ the Schatten $q$-norm ($1\le q\le \infty$), i.e.
        $\|x\|_q=(\tr(|x|^q))^{1/q}$, with the usual convention that
        for $q=\infty$ this is the operator norm.
        \begin{lem} There is an absolute constant $C$
        such that for any $p\ge 1$ we have for any scalar sequence $(a_j)$ and any $1\le q\le \infty$
        $$\E\|\sum\nolimits_1^n  a_j U_j \otimes \bar U_j (1-P)\|_q^p \le C^p \E \|\sum\nolimits_1^n a_j Y_j \otimes \bar Y'_j \|_q^p,$$
        (in fact this holds for all $k$ and all matrices $a_j\in M_k$
        with $   a_j\otimes $ in place of $a_j$).
        \end{lem}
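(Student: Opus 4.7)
The plan is to realize $U_j$ as a conditional expectation of the Gaussian matrix $Y_j$ via the polar decomposition $Y_j = U_j|Y_j|$, and then to pass to the independent-Gaussian form $\sum_j a_j Y_j\otimes\bar Y'_j$ through a Gaussian decoupling inequality. Since $|Y_j|$ is independent of $U_j$, the map $\Phi(\xi) := \E\bigl(|Y_j|\,\xi\,|Y_j|\bigr)$ on $M_N$ is unital, completely positive, and commutes with conjugation by unitaries (because $|Y_j|$ is bi-unitarily invariant). Schur's lemma applied to the decomposition $M_N = \CC I \oplus H_0$ forces $\Phi = P + \beta_N(1-P)$ for some scalar $\beta_N>0$; the coefficient on $\CC I$ is $1$ because $\E|Y_j|^2 = I$. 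Combined with the easy identities $(U_j\otimes\bar U_j)P = P$ and $\E(Y_j\otimes\bar Y_j) = P$, this yields the key formula
\begin{equation*}
\beta_N\,(U_j\otimes \bar U_j)(1-P) = \E\bigl(Y_j\otimes \bar Y_j - P\,\big|\,U_j\bigr).
\end{equation*}

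Summing over $j$, applying Jensen's inequality to the convex function $\|\cdot\|_q^p$, and integrating over the $U_j$'s gives
\begin{equation*}
\beta_N^p\,\E\bigl\|\sum\nolimits_j a_j(U_j\otimes \bar U_j)(1-P)\bigr\|_q^p \le \E\bigl\|\sum\nolimits_j a_j(Y_j\otimes \bar Y_j - P)\bigr\|_q^p.
\end{equation*}
The right-hand side is the norm of a centered Gaussian chaos of order $2$ in the entries of the $Y_j$'s. The classical decoupling inequality for Banach-space-valued quadratic Gaussian chaos (Kwapie\'n; Arcones--Gin\'e; de la Pe\~na--Montgomery-Smith) provides an absolute constant $C_0$ with
\begin{equation*}
\E\bigl\|\sum\nolimits_j a_j(Y_j\otimes\bar Y_j-P)\bigr\|_q^p \le C_0^p\,\E\bigl\|\sum\nolimits_j a_j Y_j\otimes\bar Y'_j\bigr\|_q^p,
\end{equation*}
where $Y'_j$ is an independent copy of $Y_j$. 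Chaining the two inequalities yields the lemma with $C = C_0/\beta_N$. The matricial extension in which $a_j\in M_k$ needs no change, since conditional expectation, Jensen's inequality, and the decoupling inequality all commute with $M_k$-ampliation.

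The main obstacle is to bound $\beta_N$ below by a positive absolute constant uniformly in $N$, that is, to establish a uniform spectral gap for $\Phi$ on $H_0$. As $N\to\infty$, Marchenko--Pastur concentration gives $\beta_N\to \chi^{-2}$, while for each fixed $N\ge 2$ positivity is immediate from the explicit formula $\beta_N = \E(|Y_j|_{11}|Y_j|_{22})$ obtained by testing $\Phi$ on an off-diagonal matrix unit. Together these facts give $\inf_{N\ge 2}\beta_N>0$, as required. The only other non-obvious ingredient is that the decoupling constant $C_0$ be genuinely absolute in the operator/Schatten setting; this holds by the standard symmetrization--hypercontractivity proof, which is especially clean here because the decoupled form is already separately Gaussian in each tensor factor and so can be handled by successive conditioning.
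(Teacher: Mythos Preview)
Your argument is correct and follows essentially the same route as the paper: polar decomposition $Y_j=U_j|Y_j|$, identification of the conditional-expectation operator $T=\Phi=\E(|Y|\otimes\overline{|Y|})$ as $P+\chi_N(1-P)$ via Schur's lemma, the uniform lower bound $\inf_N\chi_N>0$ obtained from $\chi_N=\E(|Y|_{11}|Y|_{22})$ and the circular-law limit, and finally a decoupling step to pass from $\sum a_j Y_j\otimes\bar Y_j(1-P)$ to $\sum a_j Y_j\otimes\bar Y'_j$.

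The only substantive difference is in the decoupling step. You invoke the general Banach-space chaos decoupling theorem as a black box, which is legitimate but imports an unspecified constant. The paper instead gives a short self-contained argument: using $\E\bigl((Y\otimes\bar Y)(1-P)\bigr)=0$, one writes the centered sum as a conditional expectation of $\sum a_j(Y_j\otimes\bar Y_j - Y'_j\otimes\bar Y'_j)(1-P)$, applies the Gaussian rotation $(Y,Y')\mapsto\bigl((Y+Y')/\sqrt2,(Y-Y')/\sqrt2\bigr)$, and observes that the difference simplifies algebraically to $Y\otimes\bar{Y'}+Y'\otimes\bar Y$, yielding the explicit decoupling constant $C_0=2$ and hence $C=2(\inf_N\chi_N)^{-1}$. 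This elementary route avoids appealing to the general chaos theory and keeps the constants transparent; your version trades that transparency for brevity.
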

           \begin{proof} We assume that all three sequences $(U_j)$, $(Y_j)$ and $(Y'_j)$ are mutually independent.
           The proof is based on the well known fact that the sequence $(Y_j)$ has the same distribution
           as $U_j|Y_j|$, or equivalently that the two factors
           in the polar decomposition $Y_j=U_j|Y_j|$ of $Y_j$ are mutually independent. Let $\cl E$  denote the conditional expectation operator
           with respect to the $\sigma$-algebra generated by $(U_j)$. Then we have
           $U_j \E|Y_j|= {\cl E}( U_j|Y_j|)={\cl E}( Y_j)$, and moreover
           $$(U_j\otimes \bar U_j)  \E(|Y_j| \otimes \o{|Y_j|})={\cl E}( U_j|Y_j| \otimes \o{U_j|Y_j|})={\cl E}(Y_j \otimes \o{Y_j}) .$$
           Let
           $$T=\E(|Y_j| \otimes \o{|Y_j|})=\E(|Y| \otimes \o{|Y|}).$$
           Then we have
           $$ \sum a_j (U_j\otimes \bar U_j) T (I-P) ={\cl E}((  \sum a_j Y_j \otimes \o{Y_j})(I-P)).$$
           Note that by rotational invariance of the Gaussian measure
           we have
           $(U \otimes \bar U)T(U^* \otimes \bar U^*)=  T$. Indeed   since
           $UYU^*$ and $Y$ have the same distribution
           it follows that
           also $UYU^* \otimes \o{UYU^*} $ and $Y\otimes \bar Y$  have the same distribution,
           and hence so do their modulus.\\
           Viewing $T$ as a linear map on $S_2^N=\ell_2^N \otimes \o{\ell_2^N} $,
           this yields
           $$\forall U\in U(N)\quad T (U\xi U^*)= UT (\xi )U^*.$$
          Representation theory 
           shows that $T$ must be simply    a linear combination of $P$ and $I-P$.
           Indeed, the unitary representation $U\mapsto U\otimes \bar U$ 
           on $U(N)$ decomposes
           into exactly two distinct irreducibles, by restricting either
           to the subspace  $\CC I$ or its orthogonal. Thus, by Schur's Lemma we know a priori
           that there are two scalars $\chi'_N ,\chi_N$ such that $T=\chi'_N P+ \chi_N (I-P)$.
           We may also observe $\E( |Y|^2)=I$ so  that $T(I)=I$ and hence $\chi'_N=1$, therefore
           $$T=P+\chi_N (I-P).$$
           Moreover, since $T(I)=I$ and $T$ is self-adjoint, $T$ commutes with $P$ and  hence $T(I-P)=(I-P)T$, 
            so that we have
           \begin{equation}\label{37}  \sum\nolimits_1^n  a_j (U_j \otimes \bar U_j )(1-P) T = {\cl E} \sum\nolimits_1^n a_j (Y_j \otimes \bar Y_j )(I-P).\end{equation}
           We claim that $T$ is invertible and that
           there is an absolute constant $C_0$ so that
           $$\|T^{-1}\| ={\chi_N}^{-1}\le C_0.$$
           From this and \eqref{37}  follows immediately that for any $p\ge 1$
          \begin{equation}\label{36} \E\|\sum\nolimits_1^n  a_j (U_j \otimes \bar U_j) (1-P)\|_q^p \le C_0^p  \E\| \sum\nolimits_1^n a_j (Y_j \otimes \bar Y_j) (1-P)  \|_q^p.\end{equation}
        
          To check the claim it suffices to compute $\chi_N$. For $i\not=j$ we have a priori
            $T (e_{ij})= e_{ij} \langle T(e_{ij}), e_{ij}\rangle$ but (since $\tr (e_{ij}  )=0$) we know $T (e_{ij})=\chi_N e_{ij}$.
            Therefore for any $i\not=j$ we have $\chi_N = \langle T(e_{ij}), e_{ij}\rangle$,
                       and the latter we can compute:
                     $$\langle T(e_{ij}), e_{ij}\rangle =\E\tr (|Y|e_{ij} |Y|^*e^*_{ij})= \E( |Y|_{ii} |Y|_{jj}).$$
         Therefore, 
         $$N(N-1)  \chi_N= \sum\nolimits_{i\not=j}  \E( |Y|_{ii} |Y|_{jj})=\sum\nolimits_{i,j}  \E( |Y|_{ii} |Y|_{jj})-\sum\nolimits_j  \E(  |Y|^2_{jj})=\E(\tr|Y |)^2 -N \E(  |Y|^2_{11}).$$
         Note that $\E(  |Y|^2_{11})= \E  \langle |Y| e_1,e_1\rangle ^2 \le   \E \langle |Y|^2 e_1,e_1\rangle =
         \E\|Y (e_1)\|^2_2=1$, and hence
         $$N(N-1)  \chi_N= \sum\nolimits_{i\not=j}  \E( |Y|_{ii} |Y|_{jj})\ge \E(\tr|Y |)^2 -N  .$$
         Now it is well known that $E|Y| =b_N I$ where $b_N$ is determined by
         $b_N=N^{-1}\E \tr |Y|= N^{-1}\|Y\|_1$ and $\inf_N b_N >0$ (see e.g. \cite[p. 80]{MP}). Actually, by a well known limit theorem
         originating in Wigner's work (see \cite{VDN}),
         when $N \to \infty$, $N^{-1}\|Y\|_1$ tends almost surely to the $L_1$-norm denoted by $\|c\|_1$
         of a circular random variable $c$ normalized in $L_2$.
         Therefore, $ N^{-2}\E(\tr|Y |)^2 $ tends to $\|c\|_1$.  
         We have
         $$  \chi_N=(N(N-1))^{-1} \sum\nolimits_{i\not=j}  \E( |Y|_{ii} |Y|_{jj})\ge (N(N-1))^{-1} \E(\tr|Y |)^2 -(N-1)^{-1}  ,$$
         and this implies
           $$\liminf_{N\to \infty}   \chi_N \ge (\|c\|_1)^2,$$
and actually   $\chi_N \to (\|c\|_1)^2.$  In any case, we have
         $$\inf\nolimits_N \chi_N >0,$$
         proving our claim with $C_0=(\inf\nolimits_N \chi_N )^{-1}$.

                    We will now deduce from \eqref{36} the desired estimate by a classical decoupling
                    argument for multilinear expressions in Gaussian variables.\\
                    We first observe $\E ((Y \otimes \bar Y) (I-P))=0$. Indeed, 
                    by orthogonality, a simple calculation shows that  $\E (Y \otimes \bar Y)=\sum _{ij} \E (Y_{ij}\o{Y_{ij}})e_{ij}\otimes \o{e_{ij}}= \sum _{ij} N^{-1}e_{ij}\otimes \o{e_{ij}}=P $,
                    and hence $\E ((Y \otimes \bar Y) (I-P))=0$.
                    
                    We will use 
                    $$(Y_j, Y'_j ) {\buildrel {dist} \over {=}}( (Y_j+Y'_j )/\sqrt 2,  (Y_j-Y'_j )/\sqrt 2)$$
                    and
                    if $\E_Y$ denotes the conditional expectation with respect to
                    $Y$ we have (recall $\E (Y_j \otimes \bar Y_j )(I-P)=0$)
                    $$\sum\nolimits_1^n a_j Y_j \otimes \bar Y_j (I-P)= \E_Y (\sum\nolimits_1^n a_j Y_j \otimes \bar Y_j (I-P)-
                    \sum\nolimits_1^n a_j Y'_j \otimes \bar Y'_j(I-P)).$$
                  Therefore    
                   $$ \E\|\sum\nolimits_1^n  a_j Y_j \otimes \bar Y_j (1-P)\|_q^p \le
                   \E\|\sum\nolimits_1^n  a_j Y_j \otimes \bar Y_j (1-P)- \sum\nolimits_1^n a_j Y'_j \otimes \bar Y'_j(I-P))\|_q^p   $$
                    
                   $$=  \E\|\sum\nolimits_1^n  a_j (Y_j+Y'_j )/\sqrt 2 \otimes \o{ (Y_j+Y'_j )/\sqrt 2} (1-P)- \sum\nolimits_1^n a_j (Y_j-Y'_j )/\sqrt 2 \otimes \o{ (Y_j-Y'_j )/\sqrt 2}(I-P))\|_q^p $$                   
     $$= \E\|\sum\nolimits_1^n  a_j  (Y_j    \otimes \o{Y'_j }+Y'_j    \otimes \o{Y_j }  ) (1-P) \|_q^p
           $$
           and hence by the triangle inequality
           $$\le 2^p \E\|\sum\nolimits_1^n  a_j  (Y_j    \otimes \o{Y'_j }   ) (1-P) \|_q^p.$$
Thus we conclude a fortiori
$$\E\|\sum\nolimits_1^n  a_j U_j \otimes \bar U_j (1-P)\|_q^p \le (2C_0)^p \E\|\sum\nolimits_1^n  a_j  (Y_j    \otimes \o{Y'_j }   )  \|_q^p,$$
so that we can take $C=2C_0$.
 \end{proof}
    \begin{thm} Let $C$ be as in the preceding Lemma. Let 
$$\hat S^{(N)}= \sum\nolimits_1^n  a_j U_j \otimes \bar U_j (1-P).$$
Then 
 \begin{equation}\label{a4}\limsup_{N\to \infty}\E \|\hat S^{(N)} \| \le   4C ( \sum |a_j|^2)^{1/2}. \end{equation}
Moreover we have 
almost surely
 \begin{equation}\label{a5} \limsup_{N\to \infty} \|\hat S^{(N)} \| \le   4C ( \sum |a_j|^2)^{1/2}. \end{equation}
  In addition, there is a constant $C'>0$   such that for any scalars $(a_j)$
  \begin{equation}\label{a44}\forall N\ge 1\quad \E \|\hat S^{(N)} \| \le   C'( \sum |a_j|^2)^{1/2}. \end{equation}
     \end{thm}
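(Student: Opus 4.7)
My plan is to first reduce, via the preceding Lemma, to the purely Gaussian chaos $S'=\sum_j a_j\,Y_j\otimes \bar Y'_j$; then estimate $\E\|S'\|$ by the moment method (Wick's formula plus a genus expansion); and finally promote the expectation bound to an almost-sure bound by Gromov--Milman concentration on $U(N)^n$. Write $|a|=(\sum_j|a_j|^2)^{1/2}$ and let $C_p=\tfrac{1}{p+1}\binom{2p}{p}$ denote the $p$th Catalan number. Applying the preceding Lemma with $p=1$, $q=\infty$ immediately gives $\E\|\hat S^{(N)}\|\le C\,\E\|S'\|$, so the whole task reduces to bounding $\E\|S'\|$.

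\textbf{Moment bound.} Starting from $\|S'\|^{2p}\le\tr((S'^{*}S')^{p})$, expand and use the independence of $(Y_j)$ from $(Y'_j)$ to factor
\[
\E\tr\bigl((S'^{*}S')^{p}\bigr)=\sum_{\vec j,\vec k}\Bigl(\prod_{i}\bar a_{j_i}a_{k_i}\Bigr)\,\E\tr(Y_{j_1}^{*}Y_{k_1}\cdots Y_{j_p}^{*}Y_{k_p})\cdot\E\tr\bigl((Y'_{j_1})^{T}\bar Y'_{k_1}\cdots\bigr).
\]
By Wick's theorem each Gaussian trace is a sum over matrix pairings $\pi$, each contributing $N^{1-2g(\pi)}$ in the genus expansion, and the color constraint forces $\pi$ to pair $Y_{j_a}^{*}$ only with $Y_{k_b}$ for which $j_a=k_b$. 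The leading-order contribution comes from \emph{pairs} $(\pi_1,\pi_2)$ of planar ($g=0$) pairings, giving $N\cdot N=N^{2}$. For a fixed such pair, the sum of $\prod_i \bar a_{j_i}a_{k_i}$ over $\vec j,\vec k$ consistent with both is bounded by $|a|^{2p}$: the graph $\pi_1\cup\pi_2$ decomposes into bipartite cycles, a cycle of length $2s$ forces its $2s$ letters to share one color and contributes $\sum_c|a_c|^{2s}\le|a|^{2s}$ by the power-mean inequality, and multiplying over cycles (whose $s$-values sum to $p$) gives $|a|^{2p}$. Since there are $C_p$ planar pairings on each side, one obtains
\[
\E\tr\bigl((S'^{*}S')^{p}\bigr)\le N^{2}\,C_p^{\,2}\,|a|^{2p}\,(1+o_N(1)),
\]
the $o_N(1)$ absorbing all higher-genus contributions, which decay as $N^{-2g}$.

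\textbf{Optimizing $p$ and concentration.} Taking $2p$th roots yields $\E\|S'\|\le N^{1/p}C_p^{1/p}|a|(1+o_N(1))$, and $C_p^{1/p}\to 4$ by Stirling. For $(a4)$ I would take $p=p(N)=\lfloor(\log N)^{2}\rfloor$, which forces $N^{1/p}\to 1$, giving $\limsup_N\E\|S'\|\le 4|a|$ and hence $\limsup_N\E\|\hat S^{(N)}\|\le 4C|a|$. For $(a44)$ the choice $p=\lfloor\log N\rfloor$ yields $N^{1/p}\le e$, hence $\E\|S'\|\le(4e+o_N(1))|a|$ uniformly, and the finitely many small $N$ are absorbed into the constant via the trivial bound $\E\|S'\|\le(\E\|S'\|_2^{\,2})^{1/2}=N|a|$. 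For $(a5)$ I would check that $f^{(N)}(u)=\|\sum_j a_j u_j\otimes \bar u_j(1-P)\|$ is $2|a|$-Lipschitz on $U(N)^n$ in the $L_2(\tau_N)$-product metric, using $\|u_j\otimes \bar u_j-v_j\otimes \bar v_j\|\le 2\|u_j-v_j\|$ and Cauchy--Schwarz across $j$; then the Gromov--Milman concentration inequality for $U(N)^n$ (Ricci curvature bounded below by $\sim N$) gives $\P(|f^{(N)}-\E f^{(N)}|>t)\le 2\exp(-cNt^{2}/|a|^{2})$, which is summable in $N$ for every fixed $t$, so Borel--Cantelli yields $f^{(N)}-\E f^{(N)}\to 0$ a.s.; combined with $(a4)$ this gives $(a5)$.

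\textbf{Main obstacle.} The delicate part is the combinatorial and topological bookkeeping in the moment bound: one must (i) correctly identify the $C_p^{\,2}$ leading scaling, rather than $C_p$, coming from the two \emph{independent} trace factors; (ii) prove the per-pair color-sum bound $|a|^{2p}$ by analyzing the bipartite cycle structure of $\pi_1\cup\pi_2$ and controlling each cycle by the power-mean inequality; and (iii) control the non-planar ($g\ge 1$) contributions uniformly in $N$ at least throughout the polylogarithmic range of $p$ used in the third step. Once these combinatorial estimates are pinned down, the Gaussian reduction via the preceding Lemma, the choice of $p$, and the concentration step are all routine.
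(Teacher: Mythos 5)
Your overall strategy coincides with the paper's: reduce to the independent Gaussian chaos $S'=\sum a_jY_j\otimes\bar Y'_j$ via the preceding Lemma, bound high moments, choose $p\asymp\log N$ so that $N^{2/p}=O(1)$, and then get the almost sure statement from a summable tail bound. The differences are in the two technical engines. For the Gaussian moment bound the paper does not do the genus expansion by hand: it invokes the non-asymptotic Haagerup--Thorbj{\o}rnsen-type inequality (quoted from \cite{Psub}, ultimately from \cite{HT2}) $\E\,\tr\,|S'|^{p}\le(\E\,\tr|Y|^{p})^{2}(\sum|a_j|^{2})^{p/2}$ for every even $p$, and then uses only the concentration of $\|Y\|$, namely $(\E\|Y\|^{p})^{1/p}\le 2+\vp(N)+\beta'\sqrt{p/N}$; the constant $4=2^{2}$ comes from the two factors of $\E\|Y\|^{p}$ rather than from $C_p^{1/p}\to4$. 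For the almost sure part the paper simply applies Tchebyshev to the $p$-th moment with $p\asymp\log N$, getting tails $\le\exp(-\vp p)=N^{-2}$ and Borel--Cantelli; your Gromov--Milman route is a legitimate alternative (note only that your function is $2|a|$-Lipschitz with respect to the \emph{unnormalized} Hilbert--Schmidt product metric, not the $L_2(\tau_N)$ one; with the normalized metric the constant is $2|a|\sqrt N$, and either bookkeeping leads to the same $\exp(-cNt^{2}/|a|^{2})$).

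The one genuine gap is the step you yourself flag as the obstacle: the claim
$\E\,\tr((S'^{*}S')^{p})\le N^{2}C_p^{2}|a|^{2p}(1+o_N(1))$
with the $o_N(1)$ ``absorbing all higher-genus contributions.'' This is not a bookkeeping detail; it is precisely the content of the Haagerup--Thorbj{\o}rnsen estimate. The number of genus-$g$ pairings of a word of length $2p$ grows roughly like $C_p\,p^{3g}$, so the higher-genus terms are controlled only when $p^{3}/N^{2}\to0$; moreover each non-planar pair $(\pi_1,\pi_2)$ carries its own color-sum, which must again be bounded by $|a|^{2p}$ (your cycle argument applies, but you must say so for general pairings, not just planar ones). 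Since you only need $p$ polylogarithmic in $N$, the range is safe, but the uniform estimate over that range still has to be proved (or cited); as written, the central inequality of your moment step is asserted rather than established. The cleanest repair is to replace the asymptotic genus expansion by the exact inequality $\E\,\tr|S'|^{p}\le(\E\,\tr|Y|^{p})^{2}|a|^{p}$, valid for all even $p$ and all $N$, after which your choice of $p$, the uniform bound \eqref{a44}, and the concentration step all go through.
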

   \begin{proof} A very direct argument is indicated in Remark \ref{sim2} below, but we prefer
   to base the proof  on \cite{HT2} in the style of    \cite{Psub} in order to make clear that it remains valid
   with matrix coefficients.
By    \cite[(1.1)]{Psub} applied twice (for $k=1$) (see also   Remark 1.5 in \cite{Psub})
one finds for 
any even integer $p$
 \begin{equation}\label{a1}
E\tr |\sum\nolimits_1^n  a_j  (Y_j    \otimes \o{Y'_j }   ) |^p  \le (\E \tr|Y|^p)^2 ( \sum |a_j|^2)^{p/2}
 \end{equation}
Therefore by the preceding Lemma
 $$E\tr |\hat S^{(N)} |^p  \le C^p (\E \tr|Y|^p)^2 ( \sum |a_j|^2)^{p/2}  ,$$
and hence a fortiori
$$E\|\hat S^{(N)} \|^p\le  N^2 C^p (\E \|Y\|^p)^2 ( \sum |a_j|^2)^{p/2} .$$
We then complete the proof, as in \cite{Psub}, using only the concentration
of the variable $ \|Y\|$. We have an absolute constant $\beta'$  and   $\vp(N)>0$ 
tending to zero when $N\to \infty$, such that
$$(\E \|Y\|^p)^{1/p}  \le 2+\vp(N) + \beta' \sqrt{p/N},  $$
and hence
$$(E\|\hat S^{(N)} \|^p) ^{1/p} \le N^{2/p}C (2+\vp(N) + \beta' \sqrt{p/N})^2 ( \sum |a_j|^2)^{1/2} .
$$
Fix $0<\vp<1$. If we  choose $p$ minimal  even integer so that $N^{2/p} \le \exp \vp $, i.e.
if we set  $p=2   ([\vp^{-1} \log N]+1)  $
(note that $p  >2\vp^{-1} \log N$ and also $p\ge 2$)
  we obtain
  $$E\|\hat S^{(N)} \|  \le (E\|\hat S^{(N)} \|^p) ^{1/p} \le 4 e^\vp C(1+\vp^{-1} \vp'(N) ) ( \sum |a_j|^2)^{1/2}$$
  where $\vp'(N)$ is independent of $\vp$ and satisfies $\vp'(N)\to 0$ when $N\to \infty$.
   Clearly \eqref{a4} and \eqref{a44}  follow.\\ Let $R_N=4C(1+\vp^{-1} \vp'(N) ) ( \sum |a_j|^2)^{1/2}$.
 By Tchebyshev's inequality $(E\|\hat S^{(N)} \|^p) ^{1/p} \le e^\vp R_N$ implies
  $$\P\{ \|\hat S^{(N)} \| > e^{2\vp} R_N\}\le     \exp -\vp p=N^2.$$
  From this it is immediate  that almost surely
 $$ \limsup_{N\to \infty} \|\hat S^{(N)} \| \le e^{2\vp} 4 C( \sum |a_j|^2)^{1/2}$$
 and hence \eqref{a5} follows.    \end{proof}
   
    \begin{rem}\label{sim1}  The same argument can be applied when $a_j\in M_k$ for any integer $k>1$. 
    Then we find
    $$\limsup_{N\to \infty}\E \| \sum\nolimits_1^n  a_j \otimes U_j \otimes \bar U_j (1-P) \| \le   4C
  \max\{ \|\sum a_j^*a_j\|^{1/2}, \|\sum a_ja_j^*\|^{1/2}   \} .$$
Moreover we have 
almost surely
 $$\limsup_{N\to \infty} \| \sum\nolimits_1^n  a_j \otimes U_j \otimes \bar U_j (1-P) \| \le   4C
  \max\{ \|\sum a_j^*a_j\|^{1/2}, \|\sum a_ja_j^*\|^{1/2}   \} .$$   
    \end{rem}
        \begin{rem} The  preceding also allows us  to majorize double sums
        of the form
        $$\sum\nolimits_{i\not=j} a_{ij}\otimes U_i \otimes \bar U_j .$$
        Indeed, we have ${\cl E} (Y_i \otimes \bar Y_j)=(U_i \otimes \bar U_j) (\E|Y|\otimes \E|Y|)$ for any $i\not =j$, and
        there is a constant $b>0$ (independent of $N$) such that $\E|Y|\ge b I$.
        Therefore,   for any $p\ge 1$,   any  $k$, any sequence $(a_{ij})$ in $M_k$, and any $1\le q\le \infty$, we have
        $$\E\|\sum\nolimits _{i\not=j} a_{ij}\otimes U_i \otimes \bar U_j \|_q^p \le b^{-2p}  \E\|\sum\nolimits _{i\not=j} a_{ij}\otimes Y_i \otimes \bar Y_j \|_q^p\le 2^p b^{-2p}   \E\|\sum\nolimits _{i\not=j} a_{ij}\otimes Y_i \otimes \bar Y'_j \|_q^p. $$ \end{rem}
   \begin{rem}\label{sim2}  We refer the reader to  \cite[Theorem 16.6]{Pg} for a self-contained proof of \eqref{a1}
  for   double sums of the form $\sum _{i,j} a_{ij}  Y_i \otimes \bar Y'_j $  for scalar coefficients $a_{ij}$.
        \end{rem}
 \bigskip
 
    \n\textbf{Acknowledgment.}  Thanks to  Kate Juschenko for useful conversations at an early stage of this investigation. I am grateful  
    to Mikael de la Salle for simplifying the proof of Lemma \ref{lem77},
    and to him and   Yanqi Qiu for numerous corrections.
 Thanks also to S. Szarek for a useful suggestion on the appendix.


\begin{thebibliography}{100}
         \bibitem{BHV}     B. Bekka, P. de la Harpe and A. Valette,   Kazhdan's property (T). 
         Cambridge University Press, Cambridge, 2008.
           
            \bibitem{BA1} A. Ben-Aroya and A. Ta-Shma, Quantum expanders and the quantum entropy difference problem, 2007, arXiv:quant-ph/0702129. 
            \bibitem{BA2} A. Ben-Aroya,  O. Schwartz, and A. Ta-Shma,   Quantum expanders: motivation and constructions. Theory Comput. 6 (2010), 47-79.
            \bibitem{BL} Y. Benyamini and J.  Lindenstrauss,  
Geometric nonlinear functional analysis. Vol. 1.  
 American Mathematical Society, Providence, RI, 2000. 
                \bibitem{BO} N.P. Brown and N. Ozawa, \emph{$C^*$-algebras and finite-dimensional approximations}, Graduate Studies in Mathematics, 88, American Mathematical Society, Providence, RI, 2008. 
             
     \bibitem{DS} K. Davidson and S. Szarek,   Local operator theory, random matrices and Banach spaces. \emph{Handbook of the geometry of Banach spaces}, Vol. I, 317-366, North-Holland, Amsterdam, 2001. 
    
     \bibitem{E} J. Elton, Sign-embeddings of $\ell_1^n$, Trans. Amer. Math. Soc. 279 (1983),  113-124.
    \bibitem{FTR}  
     A. Fig\`a-Talamanca and D. Rider,  A theorem of Littlewood and lacunary series for compact groups. Pacific J. Math. 16 (1966), 505-514.
     
\bibitem{HT2} 
U. Haagerup and S. Thorbj{oe}rnsen, Random matrices and $K$-theory for exact $C^*$-algebras, \emph{Doc. Math.} {\bf 4} (1999), 341--450 (electronic).
 \bibitem{HRV}  P. de la Harpe,  A.G. Robertson  and A.  Valette,    On the spectrum of the sum of generators for a finitely generated group. Israel J. Math. 81 (1993),  65Ð96.
             \bibitem{Ha} M. Hastings, Random unitaries give quantum expanders, Phys. Rev. A (3) 76 (2007), no. 3, 032315, 11 pp.
                        

      
     \bibitem{HLW} S. Hoory, N. Linial, and A. Wigderson,
     Expander graphs and their applications
     Bull.
Amer. Math. Soc.
 43   (2006),  439-561.
     
     \bibitem{JP} 
M. Junge and G. Pisier,  Bilinear forms on exact operator spaces and $B(H)\otimes B(H)$, \emph{Geom. Funct. Anal.} {\bf 5} (1995), no. 2, 329--363.  

\bibitem{LT} 
M. Ledoux and  M. Talagrand,
\emph{Probability in Banach Spaces. Isoperimetry and Processes},
Springer-Verlag, Berlin, 1991. 
     \bibitem{Led} 
M. Ledoux,  \emph{The Concentration of Measure Phenomenon},  American Mathematical Society, Providence, RI, 2001. 

     \bibitem{Leh} F. Lehner, $M_n$-espaces, sommes d'unitaires et analyse harmonique sur le groupe libre. PhD thesis, Universit\'e Paris VI, 1997.
     
     \bibitem{Lu} A. Lubotzky.
{\it Discrete groups, expanding graphs and invariant measures.}
Progress in Math. 125.
Birkh\"auser, 1994.
     
        \bibitem{MP} M.B. Marcus and G.Pisier, 
         {\it Random Fourier series with Applications to
Harmonic Analysis.}  Annals
of Math. Studies n$^\circ$101, Princeton University Press.
(1981).
        
         \bibitem{MW}   R. Meshulam and A. Wigderson, Expanders in group algebras. Combinatorica 24 (2004), 659-680.  
 
 
 \bibitem{OR} T. Oikhberg and \'E. Ricard, Operator spaces with few completely bounded maps. Math.
Ann., 328 (2004) 229-259.
 
         \bibitem{Pa} A. Pajor,        Sous-espaces $\ell_1^n$ des espaces de Banach.  Travaux en Cours, 16. Hermann, Paris, 1985.   
          \bibitem{Pm} 
G. Pisier, Remarques sur un r\'esultat non publi\'e de B. Maurey. Seminar on Functional Analysis, 1980-1981, Exp. No. V, 13 pp., \'Ecole Polytech., Palaiseau, 1981. [Available at http://www.numdam.org/numdam-bin/feuilleter?j=SAF]
 \bibitem{P-s} G. Pisier, 
De nouvelles caract{\' e}risations des ensembles
de Sidon.  Advances in Maths. Supplementary studies, vol
7B (1981) 685-726.
       \bibitem{P-e} G. Pisier, Condition d'entropie et caract{\' e}risations
arithm{\' e}tiques des ensembles de Sidon.  Modern Topics
in Harmonic Analysis - Torino/Milano - June/July 1982,
Inst. di Alta Math - Rome (1983) vol. II., 911-944.

         \bibitem{P-v} G. Pisier, {\it The volume of Convex Bodies and Banach
Space Geometry }.  Cambridge University Press.1989.

\bibitem{P3}
  G. Pisier,  The operator Hilbert space ${\rm OH}$, complex interpolation and tensor norms, \emph{Mem. Amer. Math. Soc.} {\bf 122} (1996), no. 585. 


                \bibitem{P4}  
G. Pisier,  \emph{Introduction to operator space theory},  Cambridge University Press, Cambridge, 2003. 

 
                        
 \bibitem{Pg} G. Pisier, Grothendieck's Theorem, past and present. Bull. Amer. Math. Soc. 49 (2012), 237-323.
 

    \bibitem{Psub} G.  Pisier, Random matrices and subexponential operator spaces. Israel J. Math. To appear.

 \bibitem{Pbm} G.  Pisier, On the metric entropy of the Banach-Mazur compactum, Preprint, arxiv 2013.

\bibitem{PS}
G.  Pisier and D. Shlyakhtenko,  Grothendieck's theorem for operator spaces, \emph{Invent. Math.} {\bf 150} (2002), no. 1, 185--217.

 \bibitem{RV} O. Regev and T. Vidick, A simple proof of Grothendieck's theorem for completely bounded norms, Preprint 2012, to appear in J. Op. Theory. 
   \bibitem{Sz} S. Szarek,  The finite-dimensional basis problem with an appendix on nets of Grassmann manifolds.  Acta Math. 151 (1983),  153-179.

               
               \bibitem{Sz2} S. Szarek  Nets of Grassmann manifold and orthogonal group, Proceedings Research Workshop in Banach space Theory,
               Univ. of Iowa, 1981.
        \bibitem{V}  D. Voiculescu,   Property T 
and approximation of operators,
Bull.\ London Math.\ Soc. 22 (1990), 25-30.

 \bibitem{VDN} D. Voiculescu, K. Dykema and A.  Nica, \emph{Free random variables},  American Mathematical Society, Providence, RI, 1992.
 \bibitem{Was} S. Wassermann,  $C^*$-algebras associated with groups with Kazhdan's property T. Ann. of Math.  134 (1991),  423-431.


 \end{thebibliography}
  \end{document}